\documentclass[11pt,reqno]{amsart}

\usepackage[margin=1.2in]{geometry}
\usepackage{amsmath,amssymb,amsthm,amscd,mathrsfs,mathtools}
\usepackage{bbm}
\usepackage{esint}
\usepackage{cases}
\usepackage{cite}
\usepackage{enumerate}
\usepackage{graphicx}
\usepackage[dvipsnames]{xcolor}
\usepackage[implicit=true]{hyperref}
\usepackage[nameinlink,noabbrev]{cleveref}
\usepackage{microtype}

\numberwithin{equation}{section}

\newtheorem{theorem}{Theorem}[section]
\newtheorem{lemma}[theorem]{Lemma}
\newtheorem{remark}[theorem]{Remark}
\newtheorem{definition}[theorem]{Definition}
\newtheorem{proposition}[theorem]{Proposition}
\newtheorem{example}[theorem]{Example}
\newtheorem{corollary}[theorem]{Corollary}

\def\e{{\mathrm{e}}}
\def\eps{\varepsilon}

\def\p{\partial}

\def\<{{\langle}}
\def\>{{\rangle}}

\newcommand{\bx}{\begin{example}}\newcommand{\ex}{\end{example}}
\def\tr{{\rm tr}}

\def\dif{{\mathord{{\rm d}}}}

\def\no{\nonumber}
\def\={&\!\!=\!\!&}
\def\bt{\begin{theorem}}
\def\et{\end{theorem}}
\def\bl{\begin{lemma}}
\def\el{\end{lemma}}
\def\br{\begin{remark}}
\def\er{\end{remark}}
\def\bd{\begin{definition}}
\def\ed{\end{definition}}
\def\bp{\begin{proposition}}
\def\ep{\end{proposition}}
\def\bc{\begin{corollary}}
\def\ec{\end{corollary}}

\def\cB{{\mathcal B}}

\def\cG{{\mathcal G}}
\def\cH{{\mathcal H}}

\def\cO{{\mathcal O}}

\def\cQ{{Q}}

\def\cT{{\mathcal T}}

\def\mI{{\mathbb I}}

\def\mL{{\mathbb L}}

\def\mR{{\mathbb R}}

\def\mW{{\mathbb W}}

\def\bC{{\mathbf C}}

\def\sA{{\mathscr A}}

\def\sL{{\mathscr L}}

\def\sW{{\mathscr W}}

\def\geq{\geqslant}
\def\leq{\leqslant}

\def\div{\mathord{{\rm div}}}

\allowdisplaybreaks

\begin{document}

\title{Derivative estimates for SDEs with singular and unbounded coefficients}

\date{}

\author{Pengcheng Xia, Longjie Xie and Xicheng Zhang}

\address{Pengcheng Xia:
School of Mathematics and Big Data, Anhui University of Science and Technology, Huainan, Anhui 232001, China\\
Email: pcxia@whu.edu.cn
}

\address{Longjie Xie:
	School of Mathematics and Statistics, Jiangsu Normal University,
	Xuzhou, Jiangsu 221000, China\\
	Email: longjiexie@jsnu.edu.cn
}

\address{Xicheng Zhang: School of Mathematics and Statistics, Beijing Institute of Technology, Beijing 100081, China;
Faculty of Computational Mathematics and Cybernetics, Shenzhen MSU-BIT University, 518172 Shenzhen, China.\\ Email: xczhang.math@bit.edu.cn
}

\thanks{
    This work is supported by the National Key R\&D Program of China (No. 2023YFA1010103) and the NNSF of China (No. 12471140, 12401182, 12595282).
}

\begin{abstract}
We develop a unified PDE-probabilistic framework for pointwise gradient and Hessian estimates of Markov semigroups associated with stochastic differential equations with singular and unbounded coefficients. Under mild local structural assumptions on the diffusion matrix and integrability/regularity conditions on the drift, we obtain quantitative sharp short-time regularization estimates as well as long-time decay bounds (including exponential and polynomial rates) for the first and second spatial derivatives of the semigroup. A distinctive feature of our results is the explicit dependence of these estimates on local norms of the coefficients (through scale-invariant quantities), without requiring any global smoothness, boundedness or uniform ellipticity. In particular, our approach allows for degenerate or highly irregular behavior at infinity, subject to suitable local ellipticity and Lyapunov/ergodicity controls. As applications, we establish solvability and regularity results for Poisson equations on the whole space with singular coefficients, and we derive pointwise gradient estimates for SDEs with distributional drifts via a Zvonkin-type transform.

\bigskip

\noindent \textbf{2020 Mathematics Subject Classification:} 60H10, 60F17, 35B40, 35B30.

\bigskip
\noindent \textbf{Keywords:} Gradient estimate, Hessian estimate, Kolmogorov equation, Poisson equation, unbounded coefficients.
\end{abstract}

\maketitle

\tableofcontents

\section{Introduction}

\subsection{Background and motivation}
Derivative (or gradient) estimates for diffusion semigroups are fundamental  in stochastic analysis and partial differential equation (PDE) theory. They provide \emph{pointwise} control of the spatial derivatives of $\mathcal T_t\varphi$ in terms of $\varphi$ itself, quantify the instantaneous regularization of the underlying stochastic dynamics, and, from the PDE viewpoint, correspond to a priori estimates for solutions of Kolmogorov equations. Beyond regularization, gradient estimates have far-reaching implications: they typically imply the strong Feller property; through their deep connection with the Bakry-\'Emery curvature-dimension condition, they lead to Poincar\'e and log-Sobolev inequalities, which in turn govern the long-time behavior, convergence to equilibrium, and ergodic properties of the underlying stochastic system; see \cite{BGL14} and the references therein.

\smallskip
Consider the following time-homogeneous stochastic differential equation (SDE) on $\mathbb R^d$:
\begin{equation}\label{eq:sde}
\mathrm{d} X_t = b(X_t)\,\mathrm{d} t + \sqrt{2}\,\sigma(X_t)\,\mathrm{d} W_t,
\qquad X_0 = x \in \mathbb R^d,
\end{equation}
where $W$ is a standard $d$-dimensional Brownian motion. Whenever \eqref{eq:sde} admits a weak solution $X_t(x)$ for each $x$, the associated Markov semigroup is defined by
\begin{equation}\label{Semi}
\mathcal T_t \varphi(x) := \mathbb E\big[\varphi(X_t(x))\big],
\quad t\geq 0,\ x\in\mathbb R^d.
\end{equation}
A basic problem is to determine whether one has pointwise bounds of the form
\begin{equation}\label{Gr11}
|\nabla_x \mathcal T_t\varphi(x)| \leq C(t,x)\,\|\varphi\|_\infty,\quad t>0,\ x\in\mR^d,
\end{equation}
and, more importantly for applications, to understand \emph{quantitatively} how $C(t,x)$ depends on the local behavior of the coefficients. For instance, if $\sigma=\mI_d$ is the identity matrix and $|b(x)|\leq \kappa(1+|x|)$ for some $\kappa>0$, 
is it possible to establish the gradient estimate:
$$
|\nabla_x \mathcal T_t\varphi(x)| \leq C(1+|x|+t^{-1/2})\|\varphi\|_\infty,\quad t\in(0,1),\ x\in\mR^d?
$$ 
Several  influential approaches have been developed to establish such estimates.

\vspace{1mm}\noindent
\textbf{(1) Malliavin calculus and Bismut formula.}
Under global non-degeneracy and sufficient smoothness assumptions on the coefficients, Malliavin calculus leads to Bismut-type derivative formulas  expressing $\nabla \cT_t\varphi$ in terms of $\varphi(X_t)$ itself rather than derivatives of $\varphi$. More precisely, one has
\begin{align}\label{Bis1}
\nabla_x \cT_t\varphi(x) = \frac{1}{t} \mathbb{E}\left[ \varphi(X_t(x)) \int^t_0 (\sigma^{-1}(X_s(x)) J_s(x) )^*\mathrm{d} W_s \right],
\end{align}
where the asterisk denotes the transpose of a matrix, and $J_t(x) := \nabla_x X_t(x) = J_t$ is the Jacobian flow satisfying
$$
J_t = I_d + \int^t_0 \nabla b(X_s) J_s \mathrm{d} s + \int^t_0 \nabla \sigma(X_s) J_s \mathrm{d} W_s,
$$
and $I_d$ is the $d\times d$ identity matrix. This idea goes back to Bismut \cite{Bi84} and was developed systematically in the works of Elworthy--Li and many others; see \cite{EL94,Th97} and the references therein. Such formulas yield short-time bounds of order $t^{-1/2}$ and provide a versatile probabilistic representation for derivatives of solutions to linear and nonlinear PDEs. Using this formula, Thalmaier and Wang \cite[Theorem 6.1]{TW98} established gradient estimate \eqref{Gr11} with explicit dependence of $C(t,x)$ on the local $L^\infty$-norm of $b$ and the local Ricci curvature bound around $x$. Moreover, under the global assumption $\nabla\sigma, b\in L^p(\mathbb{R}^d)$ with $p>d$, a Bismut formula like \eqref{Bis1} was established in \cite{XZ20}. Recently, Wang and Zhao \cite{WZ25} also derived gradient estimates for killed SDEs with irregular drift.

\smallskip\noindent
\textbf{(2) Coupling methods.}
This probabilistic approach estimates the finite difference
$$
|\cT_t\varphi(x) - \cT_t\varphi(y)|/|x-y|
$$
by constructing a coupling $(X_t,Y_t)$ whose marginals solve the original diffusion started from $x$ and $y$, and which coalesce after a random coupling time.  It bypasses the need for differentiable coefficients, making it powerful for low-regularity settings. The seminal work of Priola and Wang \cite{PW06} established a rigorous framework for coefficients that are unbounded and merely H\"older continuous. The key assumptions are the uniform ellipticity of $\sigma(x)$ and a structural assumption like strict monotonicity:
$$
\sup_{|x-y|=r} r^{-1}\big[\|\sigma(x)-\sigma(y)\|^2 + \langle b(x)-b(y), x-y\rangle\big] \leq g(r),
$$
where $g$ is integrable near zero. This assumption is strictly weaker than   global Lipschitz continuity. Under this framework, sharp gradient estimates of the form $\|\nabla \cT_t \varphi\|_\infty \leq c_t \|\varphi\|_\infty$ are obtained. Notably, when $\int_0^\infty g(s)ds < \infty$, one recovers the characteristic short-time bound $C/\sqrt{t}$. In \cite{PW06}, these results also extend to   Dirichlet problems and yield new Liouville-type theorems.

\smallskip\noindent
\textbf{(3) Analytic/PDE methods.}
The Bernstein method is a classical analytic technique used to derive gradient estimates for solutions of PDEs, particularly in the context of parabolic problems with unbounded coefficients. The core idea is to construct an auxiliary function that combines the solution and its gradient, and then apply the maximum principle to this auxiliary function. Formally, $u(t,x):=\cT_t\varphi(x)$ solves the following Kolmogorov equation
\begin{align*}
\p_t u(t,x)=\sL u(t,x),
\end{align*}
where for $a(x)=\sigma(x)\sigma(x)^*$,
\begin{align}\label{L0}
	\sL u(x) :=\tr\big(a(x)\cdot\nabla_x^2u(x)\big)+b(x)\cdot\nabla_xu(x).
	\end{align}
Using Bernstein's technique, gradient estimates for $\cT_t\varphi$ with unbounded coefficients have been studied in \cite{Be-Lo,Ce2,Ku-Lo-Lu,Lu}, see also \cite[Chapter 1]{Ce}. However,  higher-order regularity of the coefficients are generally  assumed in these works as their methods require differentiating the equations directly.

\vspace{1mm}
Despite the substantial literature, most existing works rely on global assumptions such as uniform ellipticity, global smoothness, or boundedness of coefficients, thereby excluding a wide range of physically and biologically relevant models where coefficients display localized singularities or superlinear growth at infinity. In these regimes, classical tools become much harder to apply, and there is a pressing need for {\bf localized and quantitative} derivative estimates whose constants depend explicitly on {\bf local behavior} of the coefficients. At the same time, while short-time gradient bounds with $t^{-1/2}$ scaling are by now well understood in several frameworks, {\bf long-time derivative estimates} are considerably less developed, especially for systems involving unbounded or singular coefficients. Under global smoothness and dissipativity assumptions, long-time gradient bounds have been obtained relatively recently, see e.g., \cite{Cr-Do-Go-Ot-So,Cr-Do-Ot,Cr-Ot}, and such estimates are shown to play an essential structural role in uniform-in-time convergence results in various settings, such as Euler schemes and averaging limits \cite{Cr-Do-Go-Ot-So,Cr-Do-Ot,LWX}. For SDEs with unbounded, locally regular, and possibly singular coefficients, the lack of global smoothness obstructs Bismut-type formulas, while the absence of strong confinement limits classical coupling and spectral methods.  Consequently, ,establishing long-time gradient estimates under merely local regularity remains a significant challenge.

\subsection{Main results}

In this paper, we develop a unified quantitative framework that yields both {\bf sharp short-time} regularization bounds and {\bf long-time} decay estimates for the gradient and Hessian of the semigroup associated with \eqref{eq:sde}, allowing for locally singular and unbounded coefficients. A key novelty is that the constants in our estimates depend {\bf explicitly}, and only, on {\bf local} norms of the coefficients and local ellipticity ratios, without imposing uniform ellipticity, global smoothness, or boundedness.

\vspace{1mm}

To formulate the assumptions and main results, for a function $\rho:\mathbb{R}^d\to[1,\infty)$ we introduce the weighted space
\begin{align}\label{WF}
\mathcal{B}_{\rho}:=\mathcal{B}_{\rho}(\mathbb{R}^d):=
\Bigl\{\varphi:\ \|\varphi\|_{\mathcal{B}_{\rho}}:=\sup_{x\in\mathbb{R}^d}\frac{|\varphi(x)|}{\rho(x)}<+\infty\Bigr\}.
\end{align}
In particular, by \eqref{Mom1}, for every $\varphi\in \mathcal{B}_{\rho_0}$, the semigroup \eqref{Semi} is well defined. For $r>0$ and $x\in\mathbb{R}^d$, we write $B_r(x)$ for the ball in $\mathbb{R}^d$ centered at $x$ with radius $r$. For $p\in[1,\infty]$ and $\alpha\geq0$, we denote by $L^p(B_r(x))$ and $\mathbf{C}^\alpha(B_r(x))$ the usual $L^p$ and H\"older spaces over $B_r(x)$, and similarly use $L^p_{\mathrm{loc}}(\mathbb{R}^d)$ and $\mathbf{C}^\alpha_{\mathrm{loc}}(\mathbb{R}^d)$ for local spaces; see Section~2 for precise definitions.

\vspace{1mm}

We work under the following local assumptions and a non-explosion/moment condition:

\vspace{1mm}
\begin{enumerate}[{\bf (H$_b^\sigma$)}]
\item Suppose that $\sigma\in \mathbf{C}_{\mathrm{loc}}^{\alpha}(\mathbb{R}^{d})$ for some $\alpha\in(0,1]$,
and $b\in L^{p_b}_{\mathrm{loc}}(\mathbb{R}^d)$ for some $p_b\in(d,\infty]$,
and there are two functions $0<\lambda(x)\leq\Lambda(x)<\infty$ such that
\begin{align}\label{Ellp1}
\lambda(x)|\xi|^{2} \leq |\sigma(x) \xi|^2 \leq \Lambda(x) |\xi|^{2},\quad \forall x,\xi\in \mathbb{R}^{d}.
\end{align}
Moreover, for each $x\in\mathbb{R}^d$, there is a unique global solution $(X_t(x))_{t\geq 0}$
to SDE \eqref{eq:sde},
and there exist two functions $\rho_0,\rho_1:\mathbb{R}^d\to [1,\infty)$
and an increasing continuous function $\ell_0:[0,\infty)\to[1,\infty)$ such that
\begin{align}\label{Mom1}
\mathbb{E}\,\rho_0(X_t(x))\leq \ell_0(t)\,\rho_1(x),\quad \forall t\geq 0,\ x\in\mathbb{R}^d.
\end{align}
\end{enumerate}

\begin{remark}
Under the above regularity assumptions on $\sigma$ and $b$, together with the local ellipticity condition \eqref{Ellp1}, local well-posedness of \eqref{eq:sde} is standard; see, e.g., \cite{XZ20}. If $\rho_0=\rho_1\equiv1$, then \eqref{Mom1} holds trivially. More generally, if $\rho_0$ is a Lyapunov-type function (i.e., $\rho_0(x)\to\infty$ as $|x|\to\infty$), then \eqref{Mom1} implies non-explosion of solution.
\end{remark}

Our first main result is the following estimates, which are sharp in the short-time scaling and whose constants depend explicitly, and locally, on the coefficients.

\begin{theorem}\label{thm:main-intro}
Suppose that {\bf (H$_b^\sigma$)} holds, and $\varphi\in \mathcal{B}_{\rho_0}$.
\begin{enumerate}[(i)]
\item \textbf{(Gradient estimates)}
For any $\eps>0$, there exists a constant $C=C(\eps, d, p_b,\alpha)>0$
	such that for all $(t,x)\in (0,\infty)\times \mathbb{R}^{d}$,
	\begin{align*}
		|\nabla_x \cT_t \varphi(x) | \lesssim_{C}   \frac{\Lambda_1^{\eps}(x)\Gamma_{t}(x)}{\sqrt{t\wedge 1}}
		\left(\frac{\Lambda_1(x)}{\lambda(x)}\right)^{3\eps+d/p_b}  \ell_0(2t)\,\|\rho_1\|_{L^\infty(B_1(x))}\,\|\varphi\|_{\cB_{\rho_0}},
	\end{align*}
 where $\Lambda_{1}(x):=\Lambda(x)+1$, and for $a(x)=\sigma(x)\sigma(x)^*$ and $\theta_b=1-d/p_b$,
 \begin{align}\label{s1}
 \Gamma_{t}(x):=\sqrt{t\wedge 1} \left[\frac{ \Lambda_{1}(x) [a]^{1/\alpha}_{\bC^\alpha(B_1(x))}}{\lambda^{1+1/\alpha}(x)}+\frac{\|b\|^{1/\theta_b}_{L^{p_b}(B_{1}(x))}}{\lambda(x)^{1/\theta_b}} \right]+\frac{\Lambda_{1}(x)}{\lambda(x)}.
 \end{align}

\item \textbf{(Hessian estimates)}
If, in addition, $b\in \mathbf{C}^\alpha_{\mathrm{loc}}(\mathbb{R}^d)$ with the same $\alpha$ as in {\bf (H$_b^\sigma$)}, 
and for some $c_0\in(0,1)$,
$$
c_0\lambda(x)\leq \inf_{y\in B_1(x)}\lambda(y)\leq\sup_{y\in B_1(x)}\Lambda(y)\leq c_0^{-1}\Lambda(x),
$$
then for $\beta\in\{0,\alpha\}$,
there exists a constant $C=C(c_0, d,\alpha)>0$ such that for all $(t,x)\in  (0,\infty)\times \mathbb{R}^{d}$,
	  \begin{align*}
	 	[\nabla^2_x\mathcal{T}_t \varphi]_{\bC^\beta(B_{1/2}(x))}
	 	\lesssim_C \left(\frac{\widetilde\Gamma_t(x)}{\sqrt{t\wedge 1}}\right)^{2+\beta}
	 	\ell_0(2t)\,\|\rho_1\|_{L^\infty(B_1(x))}\,\|\varphi\|_{\cB_{\rho_0}},
	 \end{align*}
	where  \begin{align}\label{s2}
	 \widetilde{ \Gamma}_{t}(x):=\sqrt{t\wedge 1} \left[\frac{ \Lambda_{1}(x) [a]^{1/\alpha}_{\bC^\alpha(B_1(x))}}{\lambda^{1+1/\alpha}(x)}+\frac{\|b\|_{\bC^\alpha(B_{1}(x))}}{\lambda(x)} \right]+\frac{\Lambda_{1}(x)}{\lambda(x)}.
	 \end{align}
\end{enumerate}
\end{theorem}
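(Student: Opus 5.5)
The plan is to localize and rescale, and to reduce both estimates to interior Schauder/$L^p$-type estimates for the Kolmogorov equation $\p_t u=\sL u$ with $u(t,\cdot)=\cT_t\varphi$, on small parabolic cylinders around $(t,x)$. Fix $(t,x)$ and set $s:=t\wedge 1$. Pick a radius $r\in(0,1]$ and a time window $[t-\delta,t]$, with $\delta\asymp r^2/\Lambda_1(x)$ (and $\delta\leq t/2$), so that the rescaled operator has coefficients of order one. Concretely, put $v(\tau,y):=u(t-\delta+\delta\tau,\,x+ry)$ for $\tau\in[0,1]$ and $|y|\le 1$. Then $v$ solves
\[
\p_\tau v=\tfrac{\delta}{r^2}\tr\big(a(x+r\cdot)\nabla^2v\big)+\tfrac{\delta}{r}\, b(x+r\cdot)\cdot\nabla v .
\]
After dividing by a normalization constant, the diffusion has ellipticity ratio $\Lambda_1/\lambda$ on $B_r(x)\subset B_1(x)$. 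Its Hölder seminorm scales like $r^\alpha[a]_{\bC^\alpha(B_1(x))}/\lambda$, and the drift has $L^{p_b}$ norm on the unit ball scaling like $r^{1-d/p_b}\|b\|_{L^{p_b}(B_1(x))}/\lambda$.

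We choose $r$ as the largest radius (capped by $1$ and by $\sqrt{s}$ times ellipticity factors) for which both quantities are $\le 1$. That is,
\[
r^{-1}\approx \frac{[a]^{1/\alpha}}{\lambda^{1/\alpha}}+\frac{\|b\|^{1/\theta_b}_{L^{p_b}}}{\lambda^{1/\theta_b}}+s^{-1/2}\times(\text{ellipticity factor}).
\]
This is exactly where the scale-invariant quantity $\Gamma_t(x)/\sqrt{s}$ in \eqref{s1} comes from. After converting $\delta/r^2$ back into time and space units, the extra factors $\Lambda_1/\lambda$ appear.

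On the unit cylinder we then invoke a normalized interior gradient estimate. For (i) this is the De Giorgi–Nash/Krylov–Safonov–type $W^{1,\infty}$ (really $C^{1,\gamma}$) estimate for nondivergence equations with VMO/Hölder leading coefficients and $L^{p}$ drift, $p>d$. It gives $\sup_{Q_{1/2}}|\nabla v|\le C\sup_{Q_1}|v|$, with $C$ depending polynomially on the ellipticity ratio. Tracking that polynomial dependence, which is where the $\eps$-losses and the power $3\eps+d/p_b$ come from, is the delicate technical point. For the $L^p$-drift term I would first treat $b\cdot\nabla v$ as a source in $L^{p_b}$ and absorb it by interpolation. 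The smallness of the rescaled norm, which our choice of $r$ guarantees, makes this absorption work.

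Undoing the scaling gives
\[
|\nabla u(t,x)|\lesssim r^{-1}\sup_{[t-\delta,t]\times B_r(x)}|u|.
\]
Finally, by \eqref{Mom1},
\[
|u(\tau,y)|\le\|\varphi\|_{\cB_{\rho_0}}\,\mathbb{E}\rho_0(X_\tau(y))\le \ell_0(2t)\,\|\rho_1\|_{L^\infty(B_1(x))}\,\|\varphi\|_{\cB_{\rho_0}},
\]
which is the zeroth-order control.

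For (ii) the argument is the same with interior Schauder estimates, which require $b\in\bC^\alpha$. The local comparability assumption on $\lambda,\Lambda$ over $B_1(x)$ makes the Schauder constants uniform, so that they depend only on $c_0,d,\alpha$. Rescaling, $[\nabla^2 v]_{\bC^\beta}\lesssim\sup|v|$ yields a factor $r^{-(2+\beta)}$. Choosing $r$ with the Hölder norm of $b$ in place of its $L^{p_b}$ norm gives $(\widetilde\Gamma_t/\sqrt{s})^{2+\beta}$.

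I expect the main obstacle to be step (i): obtaining a gradient bound whose constant depends on the ellipticity ratio only through $(\Lambda_1/\lambda)^{3\eps+d/p_b}$ times the prefactors, when $a$ is merely Hölder and $b$ only $L^{p_b}$. I would first try a freezing-coefficient argument with the heat kernel of the constant matrix $a(x)$, combined with a Campanato iteration. The merely weak well-posedness of the solution, which makes the PDE interpretation of $u$ unclear, would be handled by mollifying the coefficients and passing to the limit using the uniformity of the estimates.
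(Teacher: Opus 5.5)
Your architecture matches the paper's. You use a local a priori estimate on a cylinder whose radius is set by the local H\"older norm of $a$, the local $L^{p_b}$ norm of $b$ and $t$; zeroth-order control through \eqref{Mom1}; and an approximation argument to make $\cT_t\varphi$ a strong solution. But the step that carries the content, a local gradient bound with explicit dependence on $\lambda$, $\Lambda$ and $b$, is left as a black box, and the tools you name do not provide it. De Giorgi--Nash is a divergence-form theory, Krylov--Safonov yields only $C^\gamma$ bounds, and nothing in a generic Campanato scheme singles out $\Lambda_1^{\eps}(\Lambda_1/\lambda)^{3\eps+d/p_b}$.

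In the paper this factor is not the result of ``tracking a polynomial''. It comes out of an $\mathbb{L}^q_p$ argument (Theorem~\ref{sobov}):
\begin{itemize}
\item The frozen-coefficient estimate $\|\nabla^2u\|_{q,p}\lesssim\lambda^{-1}\|f\|_{q,p}$ holds with no dependence on $\Lambda$. This is Lemma~\ref{lemma31}, proved by splitting $a(t)=(a(t)-\lambda I_d)+\lambda I_d$ with independent Brownian motions.
\item The term $b\cdot\nabla u$ is absorbed by H\"older's inequality and the anisotropic interpolation of Lemma~\ref{inter1}. There $\partial_tu$ is controlled through the equation at cost $\Lambda_1$, which forces $\eps_2=\eps_1/\Lambda_1$ and produces the factor $\Lambda_1^{1/q}$.
\item After the iteration Lemma~\ref{Le26}, one uses the embedding $\nabla u\in L^\infty$ (valid for $d/p+2/q<1$) and $\|u\|_{\mathbb{L}^q_p(Q_R)}\le R^{d/p+2/q}\|u\|_\infty$, with $q=1/\eps$ and $d/p=d/p_b+\eps$. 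This gives $d/p+2/q=d/p_b+3\eps$ and $\Lambda_1^{1/q}=\Lambda_1^{\eps}$.
\end{itemize}

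Your scale selection also cannot deliver the stated constant, whatever unit-cylinder estimate you plug in. With $\delta\asymp r^2/\Lambda_1$, the rescaled diffusion has lower eigenvalue $\lambda/\Lambda_1$ over a unit time interval. Already for $\partial_\tau v=(\lambda/\Lambda_1)\Delta v$ with $v=\mathrm{erf}\big(y_1/\sqrt{4(\lambda/\Lambda_1)(\tau+1)}\big)$, one has $|\nabla v(1,0)|\gtrsim(\Lambda_1/\lambda)^{1/2}\sup_{Q_1}|v|$. So your constant $C$ is at least of order $(\Lambda_1/\lambda)^{1/2}$. You also choose $r$ so that the rescaled drift is at most $1$, i.e.\ $r^{-1}\gtrsim(\|b\|_{L^{p_b}(B_1(x))}/\lambda)^{1/\theta_b}$. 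Hence the drift contribution to your bound is at least $(\Lambda_1/\lambda)^{1/2}(\|b\|/\lambda)^{1/\theta_b}$. This exceeds the claimed $\Lambda_1^{\eps}(\Lambda_1/\lambda)^{3\eps+d/p_b}(\|b\|/\lambda)^{1/\theta_b}$ as soon as $d/p_b+3\eps<1/2$; take $a=\lambda I_d$ with $\lambda\to0$, $p_b=\infty$, and $b$ a large constant.

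The paper avoids this because \eqref{essob9} is explicit in the drift, with constant $\Lambda_1^{1/q}\big[\Lambda_1/(R\lambda)+(\|b\|_{q_b,p_b;R}/\lambda)^{1/\theta_b}\big]^{2-\theta}$. So $b$ need not be small: $R$ only has to make the drift term at most $\Lambda_1/(R\lambda)$. In the drift-dominated regime this makes $R$ larger than your $r$ by the factor $\Lambda_1/\lambda$, which cancels one power of the ellipticity ratio. That is why $\Gamma_t$ carries the factor $\Lambda_1/\lambda$ on the $a$- and $t$-terms but not on the $b$-term. Your formula for $r^{-1}$ does not reproduce this weighting, so your claim that it ``is exactly where $\Gamma_t/\sqrt{s}$ comes from'' does not hold.

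For (ii), a rescaled Schauder bound on $Q_r$ controls $[\nabla^2u]_{\bC^\beta}$ only on $B_{r/2}(x)$, with $r$ possibly much smaller than $1$. The paper reaches $B_{1/2}(x)$ by applying the pointwise Hessian bound and the local Schauder bound at every $y\in B_{1/2}(x)$. It then splits according to whether $|y-y'|$ exceeds $R_2/2$. This is where the comparability of $\lambda,\Lambda$ on $B_1(x)$ is actually used, not to make the Schauder constants uniform.
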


\begin{remark}
A noteworthy feature of the bounds in \Cref{thm:main-intro} is their \emph{local} dependence on the coefficients: the right-hand sides involve only local norms on $B_1(x)$, the local ellipticity ratio $\Lambda(x)/\lambda(x)$ and the lower bound $\lambda(x)$. Of course, the semigroup value $\mathcal T_t\varphi(x)=\mathbb E[\varphi(X_t(x))]$ depends globally on the dynamics, but its \emph{spatial derivatives at a point} can still be controlled by local information. Moreover, \Cref{thm:main-intro} implies the sharp short-time asymptotics: for any $\eps>0$,
\[
\limsup_{t\to 0}\sqrt t\,\big|\nabla_x\mathcal T_t\varphi(x)\big|
\lesssim_C \Lambda_1^{\eps}(x)\left(\frac{\Lambda_1(x)}{\lambda(x)}\right)^{1+3\eps+d/p_b}\|\varphi\|_\infty,
\]
and
\[
\limsup_{t\to 0} t\,\big|\nabla_x^2\mathcal T_t\varphi(x)\big|
\lesssim_C \left(\frac{\Lambda_1(x)}{\lambda(x)}\right)^2\|\varphi\|_\infty.
\]
In particular, these leading-order short-time constants do not involve any norms of $a$ or $b$.
\end{remark}
\begin{remark}
A special case of part (i) above is that if $a(x)=\lambda I_d$ and $b\in L^{p_b}_{\text{loc}}(\mathbb{R}^{d})$ with $p_b\in(d,\infty]$, then there is a constant $C=C(d,p_b,\lambda)>0$ such that for all $(t,x)\in  (0,\infty)\times \mathbb{R}^{d}$,
\begin{align*}
		|\nabla_x \cT_t \varphi(x) | \lesssim_{C}  \ell_0(2t)\,\left( \frac{1}{\sqrt{t\wedge 1}}
	+\|b\|_{L^{p_b}(B_{1}(x))}^{1/\theta_b}\right)\|\rho_1\|_{L^\infty(B_1(x))}\,\|\varphi\|_{\cB_{\rho_0}}.
	\end{align*}
In particular, if, in addition, $b$ has polynomial growth, i.e., $|b(x)|\lesssim_C 1+|x|^m$, then
$$
|\nabla_x \cT_t \varphi(x)|\lesssim_C\ell_0(2t)\,\left( \frac{1}{\sqrt{t\wedge 1}}
+|x|^m\right)\|\varphi\|_\infty,\ \ (t,x)\in  (0,\infty)\times \mathbb{R}^{d}.
$$
\end{remark}


Although the above estimates hold for all $t\geq 0$, the function $\ell_0$ does not usually decay as $t\to\infty$. To establish decay estimates, we impose an additional, natural ergodicity assumption:

\vspace{1mm}
\begin{enumerate}[\textbf{(H$^\sigma_b$)$'$}]
\item In addition to {\bf (H$^\sigma_b$)}, there exists a unique invariant probability measure $\mu\in\mathcal{P}(\mathbb{R}^d)$ for $\cT_t$ satisfying $\int_{\mathbb{R}^d}\rho_0(x)\mu(\dif x)<\infty$, and a decreasing function $\ell_1: \mathbb{R}_+\to\mathbb{R}_+$ with $\lim_{t\to\infty}\ell_1(t)=0$ such that for all $\varphi\in\mathcal{B}_{\rho_0}$,
	\begin{align}\label{Erg1}
		|\mathcal{T}_t\varphi(x)-\mu(\varphi)|\leq \ell_1(t)\,\rho_1(x)\,\|\varphi\|_{\mathcal{B}_{\rho_0}},\ (t,x)\in(1,\infty)\times\mathbb{R}^d,
	\end{align}
	where $\rho_0,\rho_1$ are the same weights as in (\ref{Mom1}).
\end{enumerate}

The next theorem shows that the gradient and Hessian of the semigroup decay at the same rate as $\ell_1(t)$, thereby quantitatively connecting short-time regularization with long-time stabilization.

\begin{theorem}\label{thm:main-intro2}
Suppose that {\bf (H$_b^\sigma$)$'$} holds, and $\varphi\in \mathcal{B}_{\rho_0}$.
\begin{enumerate}[(i)]
\item \textbf{(Long-time gradient estimates)}
For any $\eps>0$, there exists a constant $C=C(\eps, d, p_b,\alpha)>0$
such that for all $(t,x)\in (2,\infty)\times \mathbb{R}^{d}$,
\begin{align}\label{s3}
\left|\nabla_x \mathcal{T}_t\varphi(x)\right|
\lesssim_{C}\,
\ell_1(t-1)\,
\Lambda_1^{\eps}(x)\left(\frac{\Lambda_1(x)}{\lambda(x)}\right)^{3\eps+d/p_b}
\Gamma_1(x)\,\|\rho_1\|_{L^\infty(B_{1}(x))}\,\|\varphi\|_{\mathcal{B}_{\rho_0}},
\end{align}
where $\ell_1$ comes from \eqref{Erg1} and $\Gamma_1(x)$ is given by \eqref{s1} with $t=1$.

\item \textbf{(Long-time Hessian estimates)}
If, in addition, we assume the same conditions as in Theorem \ref{thm:main-intro} (ii), then for $\beta \in \{0, \alpha\}$ there exists a constant $C = C(c_0, d,\alpha) > 0$ such that for all $(t,x) \in [2,\infty) \times \mathbb{R}^d$,
\begin{align}\label{s4}
[\nabla^2_x\mathcal{T}_t \varphi]_{\mathbf{C}^\beta(B_{1/2}(x))}
\lesssim_C
\ell_1(t-1)\,\widetilde{\Gamma}_1^{2+\beta}(x)\,
\|\rho_1\|_{L^\infty(B_{1}(x))}\,\|\varphi\|_{\mathcal{B}_{\rho_0}},
\end{align}
where $\widetilde{\Gamma}_1(x)$ is given by \eqref{s2} with $t=1$.
\end{enumerate}
\end{theorem}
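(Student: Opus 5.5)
The plan is to reduce Theorem \ref{thm:main-intro2} to Theorem \ref{thm:main-intro} at time $t=1$, using the semigroup property and the fact that derivatives annihilate constants. Fix $t\geq 2$ and $\varphi\in\cB_{\rho_0}$, and define
$$
\psi:=\cT_{t-1}\varphi-\mu(\varphi).
$$
Since $\mu(\varphi)$ is a constant and $\cT_1 1=1$ (the solution is non-explosive), we get
$$
\cT_t\varphi-\mu(\varphi)=\cT_1\psi ,
$$
and therefore $\nabla_x\cT_t\varphi=\nabla_x\cT_1\psi$ and $\nabla_x^2\cT_t\varphi=\nabla^2_x\cT_1\psi$. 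The task is then to show that $\psi$ lies in a weighted space with a norm controlled by $\ell_1(t-1)$, and to apply Theorem \ref{thm:main-intro} to $\psi$ at time $1$.

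Since $t-1\geq 1$, the ergodic bound \eqref{Erg1} gives
$$
|\psi(y)|\leq \ell_1(t-1)\rho_1(y)\|\varphi\|_{\cB_{\rho_0}}
$$
(for $t-1=1$ exactly, either $\ell_1$ is extended by monotonicity or the inequality is used with $t>2$). So $\psi\in\cB_{\rho_1}$ with $\|\psi\|_{\cB_{\rho_1}}\leq \ell_1(t-1)\|\varphi\|_{\cB_{\rho_0}}$. The difficulty is that Theorem \ref{thm:main-intro} is stated for functions in $\cB_{\rho_0}$, and its right-hand side carries $\ell_0$ together with the weight $\rho_1$ on $B_1(x)$. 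I would look into the proof of Theorem \ref{thm:main-intro} rather than its statement. Presumably it proceeds by a local Schauder/De Giorgi type estimate for $u(s,y)=\cT_s\psi(y)$ on a parabolic cylinder $(0,1]\times B_1(x)$ (rescaled by $\lambda,\Lambda$). That gives the derivative bound in terms of $\sup_{s\leq 2,\,y\in B_1(x)}|u(s,y)|$, and the moment bound \eqref{Mom1} is used only to control this supremum.

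For $u=\cT_s\psi$ with $s\in[0,1]$ I would bound this supremum directly, without \eqref{Mom1}: for $s+t-1>1$,
$$
\cT_s\psi(y)=\cT_{s+t-1}\varphi(y)-\mu(\varphi),
$$
so $|\cT_s\psi(y)|\leq \ell_1(s+t-1)\rho_1(y)\|\varphi\|_{\cB_{\rho_0}}\leq \ell_1(t-1)\|\rho_1\|_{L^\infty(B_1(x))}\|\varphi\|_{\cB_{\rho_0}}$, using that $\ell_1$ is decreasing. This is exactly the quantity that $\ell_0(2)\|\rho_1\|_{L^\infty(B_1(x))}\|\varphi\|_{\cB_{\rho_0}}$ plays in Theorem \ref{thm:main-intro} at $t=1$, where $\ell_0(2)$ is an absolute constant absorbed into $C$. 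Substituting into the local estimate at time $1$ gives \eqref{s3} with $\Gamma_1(x)$, since $t\wedge1=1$, and likewise \eqref{s4} with $\widetilde\Gamma_1^{2+\beta}$ from part (ii).

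The main obstacle is this bookkeeping. One must check that the proof of Theorem \ref{thm:main-intro} indeed factors through an estimate of the form
$$
|\nabla u(1,x)|\lesssim (\text{local coefficient quantities})\cdot\sup_{[0,1]\times B_1(x)}|u|
$$
for solutions of the Kolmogorov equation, or at least through the semigroup over a time window, rather than through a global moment bound. If the proof only uses the $L^\infty$ bound on $[1/2,1]\times B_1(x)$ (interior in time), then the relevant $s$ satisfy $s+t-1>1$ and \eqref{Erg1} applies directly. If instead it needs $s$ down to $0$, I would shift the time window, for instance writing $\cT_t\varphi=\cT_1(\cT_{t-1}\varphi)$ with the local estimate applied on $[t-1,t]$, where $t-1\geq1$ keeps every time inside the range of \eqref{Erg1}. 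This explains the appearance of $\ell_1(t-1)$ rather than $\ell_1(t)$.
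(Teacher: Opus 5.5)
Your proposal is correct and is essentially the paper's proof. The paper sets $\widetilde\varphi=\varphi-\mu(\varphi)$, writes $\nabla_x\cT_t\varphi=\nabla_x\cT_1(\cT_{t-1}\widetilde\varphi)$, and applies \eqref{Hes0} (resp.\ \eqref{Hes1}) of Theorem~\ref{grad2} at time $1$, which is exactly the factorization through $\|\cT_\cdot\psi\|_{L^\infty([0,2]\times B_1(x))}$ you anticipated; it then bounds that supremum by $\ell_1(t-1)\|\rho_1\|_{L^\infty(B_1(x))}\|\varphi\|_{\cB_{\rho_0}}$ via \eqref{Erg1} and the monotonicity of $\ell_1$. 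The paper passes over the two subtleties you flag ($\psi$ being a priori only in $\cB_{\rho_1}$, and the endpoint $t=2$), and both resolve as you suggest: $\cT_s\psi=\cT_{s+t-1}\varphi-\mu(\varphi)$ is a time-shifted probabilistic solution generated by $\varphi\in\cB_{\rho_0}$, and the local estimates of Theorems~\ref{grad1} and~\ref{grad3} only involve semigroup times in $(t-\tfrac12,t+\tfrac12)\subset(1,\infty)$.
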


Our method to prove the above estimates is quite unified, and both the regularity assumptions on the coefficients and the dissipative condition for the long-time estimates are much weaker than existing results in the literature. In fact, we work at the level of the more general time-inhomogeneous Kolmogorov equation
\begin{align}\label{pde311}
\partial_{t}u(t,x)+\sL_t u(t,x) +f(t,x)=0,
\end{align}
with
\begin{align*}
\sL_t u(x) :=\tr\big(a(t,x)\cdot\nabla_x^2 u(x)\big)+b(t,x)\cdot\nabla_xu(x).
\end{align*}
The backbone of our argument is to establish pointwise control of $|\nabla_xu(t,x)|$ and $|\nabla^2_xu(t,x)|$ for solutions to \eqref{pde311} in terms of $u(t,x)$ itself; see Theorems~\ref{grad1} and \ref{grad3} below. This is achieved by deriving new a priori local $L^q_tL^p_x$ maximal regularity and local Schauder estimates for \eqref{pde311} with unbounded coefficients, where the dependence on coefficients is made explicit via scaling and interpolation, and then applying embedding theorems on domains with a novel scale selection procedure that determines the intrinsic radius based on local ellipticity and regularity norms. The short-time estimates in Theorem~\ref{thm:main-intro} follow via probabilistic representations of solutions. As a consequence, we also obtain short-time gradient estimates for time-inhomogeneous SDEs; see Theorems~\ref{Cor43} and \ref{Cor44} below. The long-time estimates in Theorem~\ref{thm:main-intro2} follow from the quantitative short-time bounds, the semigroup property, and the ergodic decay assumption, thereby linking short-time regularization and long-time stabilization.

\vspace{1mm}

Below, we provide two examples illustrating assumption {\bf (H$^\sigma_b$)$'$}, in particular the ergodic bound \eqref{Erg1}.

\begin{example} \rm \textbf{(Lyapunov condition and exponential ergodicity)}
Suppose that there exists a $C^2$-function $V:\mathbb{R}^{d}\to [1,\infty)$ and constants $c_0,c_1\in\mathbb{R}$ such that $\lim_{|x|\to\infty}V(x)=\infty$ and
\begin{align}\label{GDC}
\sL V(x)\leq c_0 V(x)+c_1,\qquad \forall x \in \mathbb{R}^{d},
\end{align}
where $\sL$ is defined in \eqref{L0}. Then \eqref{Mom1} holds with $\rho_0=\rho_1=V$ (see Theorem 2.1 in \cite{Me-Tw}) and $\ell_0(t)=\mathrm{e}^{c_0 t}$. If, moreover, $c_0<0$ in \eqref{GDC}, then \eqref{Erg1} holds with $\rho_0(x)=V(x)$, $\rho_1(x)=cV(x)$ and $\ell_1(t)=\mathrm{e}^{-\gamma t}$ for some constants $c,\gamma>0$ (see Theorem 6.1 in \cite{Me-Tw}). In particular, if $a=\sigma\sigma^*$ and $b$ satisfy one of the following dissipativity conditions, then one can choose $V$ so that \eqref{GDC} holds with $c_0<0$ (see, e.g., \cite{Eb-Gu-Zi}):

\begin{enumerate}
\item[{\rm (a)}] For any $\gamma>0$, there exist $\vartheta,c>0$ and $q\geq2$ such that
\begin{align*}
\gamma|a(x)|+\<b(x),x\>\leq -\vartheta |x|^{q}+c.
\end{align*}
Then \eqref{GDC} holds for some $V\in\mathbf{C}^2$ with $V(x):=1+|x|^p$ with $p>0$  outside a ball $B_R(0)$ and any  $c_0<0$ if $q>2$ (with a finite constant $c_{1}(p,c_0)$), or if $q=2$ with $c_0\in(-p\vartheta,0)$.

\item[{\rm (b)}] For any $\gamma>0$, there exist $\vartheta,c>0$ and $q\geq1$ such that
\begin{align}\label{xq}
\gamma q|a(x)||x|^q+\<b(x),x\>\leq -\vartheta |x|^{q}+c.
\end{align}
Then \eqref{GDC} holds for some $V\in\mathbf{C}^2$ with $V(x)=\exp(\gamma |x|^q)$ outside a ball $B_R(0)$ 
and any $c_0<0$ (with a finite constant $c_{1}(\gamma,c_0)$), provided $q>1$, or $q=1$ and $\vartheta>-\frac{c_0}{\gamma}$.

\item[{\rm (c)}]
Under \eqref{xq}, \eqref{GDC} also holds for some $V\in\mathbf{C}^2$ with $V(x)=\exp(\gamma |x|^p)$ with $p\in[1,q)$ outside a compact set and any $c_0<0$ (with a finite constant $c_{1}(\gamma,c_0)$).
\end{enumerate}
\end{example}

\begin{example} \rm \textbf{(Geometric drift condition and polynomial ergodicity)}
Assume that $\sigma(x)$ is uniformly elliptic, i.e., there exists $0<\lambda<1$ such that
\[
\lambda|\xi|^{2} \leq |\sigma(x) \xi|^2 \leq \lambda^{-1}|\xi|^{2}, \quad \forall x,\xi \in \mathbb{R}^{d},
\]
and the drift $b$ satisfies the geometric drift condition
\begin{align*}
\lim_{|x|\rightarrow \infty}\<x,b(x)\> =-\infty.
\end{align*}
Then {\bf (H$^\sigma_b$)$'$} holds with $\rho_0(x)=1+|x|^{m}$ for any $m>0$, $\ell_1(t)=t^{-\gamma}$ for any $\gamma>0$, and $\rho_1(x)=c(1+|x|^{m+\beta})$, where $c>0$ and $\beta>2\gamma$ are constants.

Indeed, under the above hypotheses, \cite[Proposition 1]{PV03} implies that for any $m>0$ and $t\geq0$,
\[
\mathbb{E}|X_t(x)|^m\leq C_d(1+|x|^m).
\]
Moreover, there exists a unique invariant measure $\mu$ for \eqref{eq:sde} which is polynomially ergodic in total variation, i.e., for any $t>0$, $k>0$, and $n>2k+2$, there exists $C$ such that
\[
\|\mathbb{P}\circ X_t(x)^{-1}-\mu\|_{\rm var}\lesssim_{C_d} (1+|x|^{n})(1+t)^{-(k+1)},
\]
where $\|\cdot\|_{\rm var}$ denotes the total variation norm of a signed measure. Taking $\rho_0(x)=1+|x|^m$, for any $\gamma>0$ and $\beta>2\gamma$, we can choose $k,n$ and $p,q>1$ with $1/p+1/q=1$ such that $\gamma=(k+1)/q$ and $\beta=n/q$, and deduce that for any $\varphi\in\mathcal{B}_{\rho_0}$,
\begin{align*}
|\mathcal{T}_t\varphi(x)-\mu(\varphi)|
&=\left|\int_{\mathbb{R}^{d}} \varphi(y)\big[\mathbb{P}\circ X_t(x)^{-1}(\mathrm{d} y)-\mu(\mathrm{d} y)\big]\right|\\
&\leq \left(\int_{\mathbb{R}^{d}} |\varphi(y)|^p \big[\mathbb{P}\circ X_t(x)^{-1}(\mathrm{d} y)+\mu(\mathrm{d} y)\big]\right)^{1/p}\,
\|\mathbb{P}\circ X_t(x)^{-1}-\mu\|_{\rm var}^{1/q}\\
&\lesssim_{C_{d}} \|\varphi\|_{\mathcal{B}_{\rho_0}} (1+|x|^{mp})^{1/p} (1+|x|^{n})^{1/q}(1+t)^{-\gamma}\\
&\lesssim_{C_{d}}\|\varphi\|_{\mathcal{B}_{\rho_0}} (1+|x|^{m+\beta})(1+t)^{-\gamma},
\end{align*}
which yields \eqref{Erg1}.
\end{example}

\subsection{Applications and future perspectives}

The explicit and local nature of our estimates is particularly useful in inhomogeneous, unbounded or singular environments: it enables robust control under truncation, scaling, and localization, and is therefore well-suited to problems in homogenization and averaging, stability analysis, and numerical approximation of singular stochastic systems. Below, we first highlight two concrete applications and then outline several methodological directions enabled by our framework.

\subsubsection{Poisson equation in the whole space.}
The long-time derivative estimates allow us to study the Poisson/corrector equation on $\mathbb{R}^{d}$:
\begin{align}\label{000}
-\sL u(x)=f(x),\quad x\in\mathbb{R}^{d},
\end{align}
where $\sL$ is given by \eqref{L0} and is the generator of \eqref{eq:sde}. Equations of the form \eqref{000} are also referred to as the cell problems in periodic homogenization, and are known to be a key tool in stochastic averaging, diffusion approximation, moderate and large deviations, and numerical approximation of invariant measures for SDEs; see, e.g., \cite{MST,PP,PV01,PV03,PavStu08}. The well-posedness and regularity of $u$ were studied in \cite{PV01,PV03}; see also \cite{Ro-Xi1} and references therein. However, these works typically impose boundedness-type assumptions, which restrict applicability. More recently, under dissipativity assumptions, \cite{Cr-Do-Go-Ot-So} studied \eqref{000} with unbounded coefficients, but still required uniform ellipticity and higher-order regularity of $\sigma$, $b$, and the forcing $f$.

We establish well-posedness of \eqref{000} in local Sobolev spaces for singular and unbounded coefficients, and provide quantitative gradient and Schauder-type bounds for the solution $u$ with explicit dependence on local coefficient norms and local ellipticity ratios; see Theorems~5.1 and~5.2 below. These quantitative estimates are crucial in proving uniform-in-time convergence of multi-scale problems: in averaging/homogenization and diffusion approximation, effective coefficients and error terms are typically expressed through correctors solving \eqref{000} (see \cite{XZ26}).

\subsubsection{SDEs with distributional drifts.}
As an illustration of the scope of our localized framework, we apply the main results to derive gradient estimates for SDEs with distributional drift:
\[
\dif X_t=(b_1+b_2)(X_t)\dif t+\dif W_t,
\]
where $b_1$ belongs to a negative-order Sobolev/Bessel potential class and $b_2$ has at most linear growth. Such models arise naturally in fluid dynamics and rough path theory, and classical approaches to gradient estimates break down due to the lack of pointwise meaning of the drift (see \cite{CC18, GP24, HZ25} and references therein). Using a Zvonkin-type transform, we remove the singular component $b_1$ and reduce the equation to an SDE with H\"older diffusion and unbounded drift, to which our theorems apply. This yields, to the best of our knowledge, the first pointwise gradient estimate for semigroups associated with SDEs whose drift lies in negative-order Sobolev spaces; see Theorem~6.3 below.

\subsubsection{Methodological perspectives}
Beyond the specific applications presented above, our framework provides a foundation for several research directions:

\begin{itemize}
\item \textbf{Nonlinear PDEs via linearization.}
Many nonlinear (quasilinear) parabolic equations reduce locally to Kolmogorov-type operators after linearization. Quantitative a priori bounds with explicit dependence on local coefficient norms provide natural input for bootstrap and continuation arguments. This perspective is closely related to the classical theory of linear and quasilinear parabolic equations \cite{LSU68,Lieberman96} and to modern $L^p$-based regularity theories for rough coefficients \cite{KrylovVMO06,Kr08}.

\item \textbf{Homogenization, averaging, and diffusion approximation.}
Quantitative weak convergence often reduces to controlling correctors and remainder terms expressed through Poisson equations and semigroup expansions. Our localized short-time derivative bounds enable precise error control in multi-scale expansions even when global ellipticity or regularity fails; see \cite{Khas68,PV01,PV03,PavStu08} for representative frameworks where Poisson equations are structural. Moreover, the precise long-time bounds fit naturally into error decompositions where the time horizon must be controlled uniformly, see \cite{XZ26}.

\item \textbf{Quantitative numerical analysis.}
Weak error analysis for discretization schemes requires derivative estimates for backward Kolmogorov equations. Our results naturally feed into local weak error bounds under unbounded coefficients via Lyapunov weights, and into long-time error estimates (such as approximation of invariant measures) when combined with ergodic decay. For classical weak error expansions and numerical perspectives, see \cite{MST,TalayTubaro90,Platen99} and references therein.

\item \textbf{Sensitivity and robustness.}
Pointwise gradient bounds quantify the stability of expectations under perturbations, which is crucial for parameter sensitivity and robustness of Markov semigroups under coefficient perturbations. Because our constants are local, stability can be proved by localization and patching arguments, a viewpoint compatible with Harris-type ergodic frameworks and perturbation arguments; see e.g. \cite{Eb-Gu-Zi,Me-Tw}.

\item \textbf{Stochastic control with rough coefficients.}
In stochastic control, verification and regularity of feedback controls depend on spatial derivatives of the value function (solutions to HJB equations). Localized semigroup regularization estimates provide building blocks for mollified verification and stability of near-optimal controls when coefficients are unbounded or rough; see, e.g., \cite{FlemingSoner06} for the viscosity-solution approach.
\end{itemize}

The rest of the paper is organized as follows. Section~\ref{sec:prelim} introduces notation and analytic tools. Section~\ref{sec:grad} develops local PDE estimates for Kolmogorov equations. Section~4 derives pointwise gradient and Hessian estimates for the SDE semigroup. Sections~5 and~6 present the applications to Poisson equations and SDEs with distributional drifts, respectively.

\vspace{1mm}
Throughout the paper, $C$ denotes a positive constant whose value may change from line to line. We write $A\lesssim B$ to mean $A\leq C B$ for some inessential constant $C>0$, and we will specify the dependence of constants when necessary.

\section{Preliminaries}\label{sec:prelim}

In this section, we introduce the local Sobolev and H\"older spaces, together with several interpolation inequalities that will be used repeatedly below.

\subsection{Local Sobolev spaces and anisotropic interpolation}

Let $Q=I\times\mathcal{O}\subset\mathbb{R}\times\mathbb{R}^{d}$, where $I\subset\mathbb{R}$ is an open interval and $\mathcal{O}\subset\mathbb{R}^d$ is an open domain. For $q,p\in[1,\infty]$, we define the anisotropic Lebesgue norm
\[
\|f\|_{\mathbb{L}^q_p(Q)}:=\|f\|_{L^q(I;L^p(\mathcal{O}))}
:=\left(\int_I\left(\int_{\mathcal{O}} |f(t,x)|^p\,\dif x\right)^{q/p}\dif t\right)^{1/q},
\]
with the usual modifications when $q=\infty$ or $p=\infty$. When $Q=\mathbb{R}\times\mathbb{R}^d$, we simply write
\[
\|f\|_{q,p}:=\|f\|_{L^q_tL^p_x}:=\|f\|_{L^q(\mathbb{R};L^p(\mathbb{R}^d))},\qquad
\|f\|_{\infty}:=\|f\|_{L^\infty(\mathbb{R};L^\infty(\mathbb{R}^d))}.
\]
Let $\mW^{1,2}_{q,p}(Q)$ denote the anisotropic Sobolev space consisting of all measurable functions $f$ on $Q$ such that $\partial_t f$ and $\nabla_x^2 f$ exist in the weak sense and
\[
\|f\|_{\mW^{1,2}_{q,p}(Q)}:=\|f\|_{\mathbb{L}^q_p(Q)}+\|\partial_t f\|_{\mathbb{L}^q_p(Q)}+\|\nabla_x^2 f\|_{\mathbb{L}^q_p(Q)}<\infty.
\]
We also introduce the local Sobolev space
\[
\mW^{1,2}_{q,p;\mathrm{loc}}(Q):=\big\{u:\ u\phi\in \mW^{1,2}_{q,p}(Q)\ \text{for all }\phi\in \mathbf{C}^\infty_c(Q)\big\}.
\]
Similarly, $\mW^2_p(\mathcal{O})$ and $\mW^2_{p;\mathrm{loc}}(\mathcal{O})$ denote the global and local Sobolev spaces over $\mathcal{O}$, respectively.

We record the following Gagliardo--Nirenberg type anisotropic interpolation inequality.

\begin{lemma}\label{inter1}
Let $p,q\in(1,\infty)$ satisfy
$
\frac{d}{p}+\frac{2}{q}<1.
$
Let $q_0\in[q,\infty]$ and $p_0\in[p,\infty]$, and define
$
\theta:=1-\frac{d}{p}-\frac{2}{q}+\frac{d}{p_0}+\frac{2}{q_0}.
$
Then there exists a constant $C_1=C_1(d,p,q,p_0,q_0)>0$ such that for
any $u\in \mW^{1,2}_{q,p}(\mathbb{R}\times\mathbb{R}^d)$ and all $\eps_1,\eps_2>0$,
\begin{equation}\label{eq:B}
\|\nabla_x u\|_{q_0,p_0}
\lesssim_{C_1}
\eps_1\|\nabla_x^2 u\|_{q,p}
+\eps_2\|\partial_t u\|_{q,p}
+\eps_1^{(d/p_0-1-d/p)/\theta}\eps_2^{(2/q_0-2/q)/\theta}\|u\|_{q,p}.
\end{equation}
\end{lemma}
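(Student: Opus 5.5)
Our plan is to first prove the scale-free version with $\eps_1=\eps_2=1$, namely
\begin{align*}
\|\nabla_x u\|_{q_0,p_0}\lesssim_{C_1} \|\nabla_x^2 u\|_{q,p}+\|\partial_t u\|_{q,p}+\|u\|_{q,p},
\end{align*}
and then to recover the general inequality \eqref{eq:B} by an anisotropic rescaling. The unit-scale bound is a standard embedding. Since $u\in\mW^{1,2}_{q,p}$, we write $u=(1-\partial_t-\Delta)^{-1}g$ with $\|g\|_{q,p}\lesssim \|u\|_{\mW^{1,2}_{q,p}}$. The heat-type kernel $\int_0^\infty e^{-s}p_s\,\dif s$ is taken in space-time, so that $\nabla_x u=K*g$ with $|K(t,x)|\lesssim t^{-(d+1)/2}e^{-t}e^{-|x|^2/(ct)}\mathbf 1_{t>0}$.

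We then apply Young's inequality separately in $x$ and in $t$, using exponents $1+1/p_0=1/r+1/p$ and $1+1/q_0=1/s+1/q$. This gives $\|K(t,\cdot)\|_{L^r}\lesssim t^{-1/2-\frac d2(1/p-1/p_0)}e^{-t}$. Its $L^s_t$ norm is finite exactly when
\begin{align*}
\Big(\tfrac12+\tfrac d2\big(\tfrac1p-\tfrac1{p_0}\big)\Big)s<1,
\end{align*}
which is equivalent to $\tfrac1{q}-\tfrac1{q_0}+\tfrac12+\tfrac d{2p}-\tfrac d{2p_0}<1$, i.e. to $\theta>0$. Under the hypothesis $d/p+2/q<1$ we indeed have $\theta\geq 1-d/p-2/q>0$. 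The endpoint cases $p_0=\infty$ or $q_0=\infty$ are covered by the same Young/Hölder argument.

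For the scaling step, set $u_{\mu,\nu}(t,x):=u(\nu t,\mu x)$ with $\mu,\nu>0$. Then
\begin{itemize}
\item $\|\nabla u_{\mu,\nu}\|_{q_0,p_0}=\mu\,\nu^{-1/q_0}\mu^{-d/p_0}\|\nabla u\|_{q_0,p_0}$,
\item $\|\nabla^2 u_{\mu,\nu}\|_{q,p}=\mu^{2}\nu^{-1/q}\mu^{-d/p}\|\nabla^2 u\|$,
\item $\|\partial_t u_{\mu,\nu}\|=\nu\,\nu^{-1/q}\mu^{-d/p}\|\partial_t u\|$,
\item $\|u_{\mu,\nu}\|=\nu^{-1/q}\mu^{-d/p}\|u\|$.
\end{itemize}
Applying the unit-scale bound to $u_{\mu,\nu}$ and dividing by the prefactor of the left side yields
\begin{align*}
\|\nabla u\|_{q_0,p_0}\lesssim \mu^{1-A}\nu^{-B}\|\nabla^2 u\|+\mu^{-1-A}\nu^{1-B}\|\partial_t u\|+\mu^{-1-A}\nu^{-B}\|u\|,
\end{align*}
where $A=d/p-d/p_0\geq0$ and $B=1/q-1/q_0\geq0$.

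We choose $\mu,\nu$ so that $\eps_1=\mu^{1-A}\nu^{-B}$ and $\eps_2=\mu^{-1-A}\nu^{1-B}$. Taking logarithms gives a $2\times2$ linear system whose determinant is
\begin{align*}
(1-A)(1-B)-B(1+A)=1-A-2B=\theta>0,
\end{align*}
so the system is uniquely solvable for all $\eps_1,\eps_2>0$. It remains to compute the third coefficient $\mu^{-1-A}\nu^{-B}$ in terms of $\eps_1,\eps_2$. Solving gives $\log\mu=\big((1-B)\log\eps_1+B\log\eps_2\big)/\theta$ and $\log\nu=\big((1+A)\log\eps_1+(1-A)\log\eps_2\big)/\theta$. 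Substituting, the exponent of $\eps_1$ is
\begin{align*}
\big(-(1+A)(1-B)-B(1+A)\big)/\theta=-(1+A)/\theta=\big(d/p_0-1-d/p\big)/\theta,
\end{align*}
and the exponent of $\eps_2$ is
\begin{align*}
\big(-(1+A)B-B(1-A)\big)/\theta=-2B/\theta=\big(2/q_0-2/q\big)/\theta.
\end{align*}
These match \eqref{eq:B}.

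The main obstacle is the unit-scale mixed-norm embedding, specifically the kernel integrability near $t=0$, which is precisely where the condition $\theta>0$ enters. Everything else is bookkeeping.
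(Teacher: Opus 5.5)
Your proposal is correct and follows the paper's proof. The paper also rescales to $u(\lambda_1 t,\lambda_2 x)$, applies a unit-scale embedding and solves the same $2\times2$ system, with determinant $\theta$; your $\mu,\nu$ are its $\lambda_2,\lambda_1$. The paper cites the unit-scale bound from Krylov--R\"ockner \cite[Lemma 10.2]{Kr-Ro}, whereas you prove it yourself via the space-time kernel of $(1-\partial_t-\Delta)^{-1}$ and mixed-norm Young, which also shows that only $\theta>0$ is needed.
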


\begin{proof}
Fix $\lambda_1,\lambda_2>0$ and define the parabolic rescaling
\begin{equation*}
\widetilde u(t,x):=u(\lambda_1 t,\lambda_2 x).
\end{equation*}
Since $\frac{d}{p}+\frac{2}{q}<1$, it is known that there exists a constant $C=C(d,p,q,p_0,q_0)>0$ such that (see \cite[Lemma 10.2]{Kr-Ro})
\begin{equation}\label{eq:A}
\|\nabla_x \widetilde u\|_{q_0,p_0}
\lesssim_{C}
\|\nabla_x^2 \widetilde u\|_{q,p}+\|\partial_t \widetilde u\|_{q,p}+\|\widetilde u\|_{q,p}.
\end{equation}
A direct computation gives the scaling relations
\[
\begin{aligned}
\|\nabla_x \widetilde u\|_{q_0,p_0}
&=\lambda_2^{1-d/p_0}\lambda_1^{-1/q_0}\|\nabla_x u\|_{q_0,p_0},
\qquad
\|\widetilde u\|_{q,p}
=\lambda_2^{-d/p}\lambda_1^{-1/q}\|u\|_{q,p},\\
\|\nabla_x^2 \widetilde u\|_{q,p}
&=\lambda_2^{2-d/p}\lambda_1^{-1/q}\|\nabla_x^2 u\|_{q,p},
\qquad
\|\partial_t \widetilde u\|_{q,p}
=\lambda_2^{-d/p}\lambda_1^{1-1/q}\|\partial_t u\|_{q,p}.
\end{aligned}
\]
Substituting into \eqref{eq:A} yields
\begin{align}\label{As5}
\begin{split}
\|\nabla_x u\|_{q_0,p_0}
&\lesssim_{C}
\lambda_2^{1+d/p_0-d/p}\lambda_1^{1/q_0-1/q}\|\nabla_x^2 u\|_{q,p}\\
&\quad+\lambda_2^{d/p_0-d/p-1}\lambda_1^{1+1/q_0-1/q}\|\partial_t u\|_{q,p}\\
&\quad+\lambda_2^{d/p_0-d/p-1}\lambda_1^{1/q_0-1/q}\|u\|_{q,p}.
\end{split}
\end{align}
Now choose $\lambda_1,\lambda_2>0$ such that
\[
\lambda_2^{1+d/p_0-d/p}\lambda_1^{1/q_0-1/q}=\eps_1,
\qquad
\lambda_2^{d/p_0-d/p-1}\lambda_1^{1+1/q_0-1/q}=\eps_2.
\]
Solving these equations gives
\[
\lambda_2
=\eps_1^{(1+1/q_0-1/q)/\theta}\,\eps_2^{(1/q-1/q_0)/\theta},
\qquad
\lambda_1
=\eps_1^{(1+d/p-d/p_0)/\theta}\,\eps_2^{(1+d/p_0-d/p)/\theta},
\]
where $\theta=1-\frac{d}{p}-\frac{2}{q}+\frac{d}{p_0}+\frac{2}{q_0}$.
Consequently,
\begin{equation}\label{As7}
\lambda_2^{d/p_0-d/p-1}\lambda_1^{1/q_0-1/q}
=\eps_1^{(d/p_0-1-d/p)/\theta}\,\eps_2^{(2/q_0-2/q)/\theta}.
\end{equation}
Combining \eqref{As5} and \eqref{As7} gives the desired estimate \eqref{eq:B}.
\end{proof}

\begin{remark}
Let $u\in \mW^{1,2}_{q,p}(\mathbb{R}\times\mathbb{R}^d)$ with $p,q\in(1,\infty)$ and $\frac{d}{p}+\frac{2}{q}<1$.
For $p_0\in[p,\infty]$, the estimate
\begin{equation*}
\|\nabla_x u\|_{q,p_0}
\lesssim_{C}
\eps\|\nabla_x^2 u\|_{q,p}
+\eps^{(d/p_0-1-d/p)/\theta_0}\|u\|_{q,p},
\quad
\theta_0:=1-\tfrac{d}{p}+\tfrac{d}{p_0},
\end{equation*}
follows from the classical (spatial) Gagliardo--Nirenberg interpolation inequality applied to $u(t,\cdot)$ for a.e.\ $t$ and then integrated in time; see, e.g., \cite[Theorem 1.1]{FFRS}.
\end{remark}

\subsection{H\"older spaces in a domain}

Let $\mathcal{O}\subset\mathbb{R}^d$ be an open domain. For $\alpha\in(0,1]$ and $k\in\mathbb{N}_0$,
we use $\mathbf{C}^{k+\alpha}(\mathcal{O})$ to denote the H\"older spaces over $\mathcal{O}$ with norm
$$
\|f\|_{\mathbf{C}^{k+\alpha}(\mathcal{O})}:=\|f\|_{L^\infty(\mathcal{O})}+\cdots+\|\nabla^k f\|_{L^\infty(\mathcal{O})}+[\nabla^k f]_{\mathbf{C}^{\alpha}(\mathcal{O})},
$$
where
$$
[\nabla^k f]_{\mathbf{C}^{\alpha}(\mathcal{O})}:=\sup_{x\neq y\in\mathcal{O}}\frac{|\nabla^k f(x)-\nabla^k f(y)|}{|x-y|^\alpha}.
$$
We shall use the conventions
\[
[f]_\alpha:=[f]_{\mathbf{C}^\alpha(\mathbb{R}^d)},\qquad
\|f\|_{\alpha}:=\|f\|_{\infty}+  [f]_\alpha.
\]
The local H\"older space is defined by
\[
\mathbf{C}^{k+\alpha}_{\mathrm{loc}}(\mathcal{O})
:=\{f:\ f\phi\in \mathbf{C}^{k+\alpha}(\mathcal{O})\ \text{for all }\phi\in\mathbf{C}^\infty_c(\mathcal{O})\}.
\]

We recall the following iteration lemma (cf.\ \cite[Lemma 4.3]{Ha-Li}), which will be used for proving our local $L^p$-maximal regularity estimates and Schauder's estimates.

\begin{lemma}\label{Le26}
Let $0\leq \tau_1<\tau_2\leq 1$, $A,B>0$, and let $h:[\tau_1,\tau_2]\to[0,\infty)$ be bounded. Suppose that for some $\gamma\geq 0$ and $\vartheta\in(0,1)$,
\[
h(\tau)\leq\vartheta\, h(\tau')+(\tau'-\tau)^{-\gamma}A+B,
\qquad \tau_1\leq\tau<\tau'\leq\tau_2.
\]
Then there exists $C=C(\gamma,\vartheta)>0$ such that for all $\tau_1\leq\tau<\tau'\leq\tau_2$,
\[
h(\tau)\lesssim_C (\tau'-\tau)^{-\gamma}A+B.
\]
\end{lemma}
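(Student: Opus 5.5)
We prove Lemma \ref{Le26} by the standard geometric iteration. Fix $\tau_1\le\tau<\tau'\le\tau_2$ and choose a parameter $\kappa\in(0,1)$, to be fixed later in terms of $\gamma$ and $\vartheta$. We build an increasing sequence of radii
\[
t_0:=\tau,\qquad t_{k+1}:=t_k+(1-\kappa)\kappa^{k}(\tau'-\tau),\qquad k\ge0,
\]
so that $t_k\uparrow\tau'$ and $t_k\in[\tau,\tau']\subset[\tau_1,\tau_2]$. The increments are $t_{k+1}-t_k=(1-\kappa)\kappa^k(\tau'-\tau)$.

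Applying the hypothesis with the pair $(t_k,t_{k+1})$ gives
\[
h(t_k)\le \vartheta\,h(t_{k+1})+(1-\kappa)^{-\gamma}\kappa^{-k\gamma}(\tau'-\tau)^{-\gamma}A+B .
\]
Iterating this from $k=0$ to $k=N-1$ yields
\[
h(\tau)\le \vartheta^{N}h(t_N)+(1-\kappa)^{-\gamma}(\tau'-\tau)^{-\gamma}A\sum_{k=0}^{N-1}(\vartheta\kappa^{-\gamma})^{k}+B\sum_{k=0}^{N-1}\vartheta^{k}.
\]

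The key choice, and the only point needing care, is $\kappa$. We need $\vartheta\kappa^{-\gamma}<1$ so that the geometric series converges. Taking $\kappa:=\big((1+\vartheta)/2\big)^{1/\gamma}\cdot\vartheta^{0}$ does not quite work; the right condition is $\kappa^{\gamma}>\vartheta$. So we choose
\[
\kappa:=\Big(\tfrac{1+\vartheta}{2}\Big)^{1/\gamma}\in(0,1)\quad\text{when }\gamma>0,
\]
which gives $\vartheta\kappa^{-\gamma}=2\vartheta/(1+\vartheta)<1$. When $\gamma=0$, any $\kappa$, say $\kappa=1/2$, works.

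We then let $N\to\infty$. Since $h$ is bounded on $[\tau_1,\tau_2]$, the term $\vartheta^N h(t_N)\le \vartheta^N\sup h\to0$; this is exactly where the boundedness assumption enters. Summing the two geometric series gives
\[
h(\tau)\le \frac{(1-\kappa)^{-\gamma}}{1-2\vartheta/(1+\vartheta)}(\tau'-\tau)^{-\gamma}A+\frac{1}{1-\vartheta}B,
\]
with constants depending only on $(\gamma,\vartheta)$. This is the claim.
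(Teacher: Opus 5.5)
Your proof is correct and is the standard geometric-iteration argument of Han--Lin (\cite[Lemma 4.3]{Ha-Li}), which the paper cites rather than proves: radii $t_{k+1}=t_k+(1-\kappa)\kappa^k(\tau'-\tau)$ with $\kappa^\gamma>\vartheta$, and boundedness of $h$ to send $\vartheta^N h(t_N)\to0$. The only blemish is the stray sentence saying $\kappa=((1+\vartheta)/2)^{1/\gamma}\cdot\vartheta^{0}$ ``does not quite work''; since $\vartheta^0=1$, this is exactly the choice you then adopt, so delete that sentence.
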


We also need the following interpolation inequality in H\"older spaces, which is used in local Schauder estimates.

\begin{lemma}\label{inter11}
	Let $0\leq\beta\leq\alpha\leq 1$. For any $j=0,1,2$,
	there exist constants $C=C(d,j,\beta,\alpha)>0$
	and $\gamma=\gamma(j,\alpha,\beta)>0$ such that for all $u\in \bC^{2+\alpha}(B_1)$,
	$\eps>0$ and $\frac12\leq\sigma<\tau\leq1$,
	\begin{align}\label{Ho1}
	[\nabla^j u]_{\bC^\beta(B_\sigma)}\lesssim_C \eps^{2-j+\alpha-\beta}[\nabla^2_xu]_{\bC^\alpha(B_\tau)}
	+\eps^{-j-\beta}[\eps^{2+\alpha}(\tau-\sigma)^{-\gamma}+1]\| u\|_{L^\infty(B_\tau)}.
	\end{align}
\end{lemma}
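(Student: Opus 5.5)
We prove \eqref{Ho1} by combining the standard interpolation inequalities on a fixed ball with a covering by small balls of radius comparable to $\eps$. The case $\eps\geq (\tau-\sigma)$ and the case $\eps<\tau-\sigma$ are handled separately.

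\emph{Step 1: scale-one inequality.} On a ball $B_r(y)$ the classical interpolation estimate (with constants uniform in the center) gives, for $j\in\{0,1,2\}$ and $0\leq\beta\leq\alpha$,
\[
[\nabla^j u]_{\bC^\beta(B_r(y))}\lesssim r^{2-j+\alpha-\beta}[\nabla^2u]_{\bC^\alpha(B_r(y))}+r^{-j-\beta}\|u\|_{L^\infty(B_r(y))}.
\]
For $r=1$ this is the usual statement that the seminorm is controlled by $\|u\|_{\bC^{2+\alpha}(B_1)}$ together with the bound $\|u\|_{\bC^{2+\alpha}}\lesssim [\nabla^2u]_{\alpha}+\|u\|_\infty$. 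The latter follows by subtracting the second-order Taylor polynomial and using the equivalence of norms on the finite-dimensional space of quadratic polynomials. The general $r$ follows by rescaling $u(y+r\cdot)$, which produces exactly the exponents $2-j+\alpha-\beta$ and $-j-\beta$.

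\emph{Step 2: small $\eps$, local patching.} Suppose $\eps\leq \tau-\sigma$. Set $r=\eps$, so every ball $B_r(y)$ with $y\in B_\sigma$ lies in $B_\tau$.

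For the sup part ($\beta=0$), the bound follows directly from Step 1 applied at each point.

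For the H\"older seminorm, take $x,z\in B_\sigma$ and split into two cases.
\begin{itemize}
\item If $|x-z|<r$, both points lie in $B_r(x)$, so Step 1 applies directly.
\item If $|x-z|\geq r$, bound $|\nabla^ju(x)-\nabla^ju(z)|\leq 2\sup_{B_\sigma}|\nabla^ju|$. Dividing by $|x-z|^\beta\geq r^\beta$ and using the sup bound from Step 1 gives the right-hand side with $\eps^{2-j+\alpha}\cdot\eps^{-\beta}$ and $\eps^{-j}\cdot\eps^{-\beta}$.
\end{itemize}
This yields \eqref{Ho1} without the $(\tau-\sigma)^{-\gamma}$ term.

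\emph{Step 3: large $\eps$.} Suppose $\eps>\tau-\sigma$. First apply Step 2 with $\eps_0=\tau-\sigma\leq 1/2$:
\[
[\nabla^ju]_{\bC^\beta(B_\sigma)}\lesssim \eps_0^{2-j+\alpha-\beta}[\nabla^2u]_{\bC^\alpha(B_\tau)}+\eps_0^{-j-\beta}\|u\|_{L^\infty(B_\tau)}.
\]
Since $2-j+\alpha-\beta\geq0$ and $\eps_0<\eps$, we have $\eps_0^{2-j+\alpha-\beta}\leq\eps^{2-j+\alpha-\beta}$. For the lower-order term we write
\[
\eps_0^{-j-\beta}=\eps^{-j-\beta}\,(\eps/\eps_0)^{j+\beta}\leq \eps^{-j-\beta}\,\eps^{2+\alpha}(\tau-\sigma)^{-\gamma}\quad\text{with }\gamma=j+\beta.
\]
This inequality requires $(\eps/\eps_0)^{j+\beta}\leq \eps^{2+\alpha}\eps_0^{-(j+\beta)}$, which holds provided $\eps^{2+\alpha-j-\beta}\geq1$, i.e.\ $\eps\geq1$ (the exponent is nonnegative).

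The remaining subcase is $\eps_0<\eps<1$. Here choose $\gamma=2+\alpha$: then $\eps^{2+\alpha}(\tau-\sigma)^{-(2+\alpha)}=(\eps/\eps_0)^{2+\alpha}\geq(\eps/\eps_0)^{j+\beta}$, since $\eps/\eps_0>1$ and $2+\alpha\geq j+\beta$.

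For $\eps\geq1$ the choice $\gamma=2+\alpha$ also works, because $\eps_0\leq1$ gives $\eps_0^{-(2+\alpha)}\geq\eps_0^{-(j+\beta)}$. So $\gamma=2+\alpha$ works in all cases, and combining Steps 2 and 3 gives \eqref{Ho1}.

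\emph{Main obstacle.} The only delicate point is the uniform Step 1 constant, i.e.\ the Taylor-polynomial argument on a ball. This is standard, since $B_1$ is a Lipschitz domain with constants independent of the center.
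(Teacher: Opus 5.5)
Your proof is correct, and it proves \eqref{Ho1} exactly as stated with the explicit exponent $\gamma=2+\alpha$. It takes a genuinely different route from the paper.

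The paper starts from the whole-space inequality $[\nabla^j u]_\beta\lesssim[\nabla^2u]_\alpha+\|u\|_\infty$ (quoted from Krylov) and rescales it to get the $\eps$-dependence. It then localizes by applying this to $u\eta_0$, with a smooth cutoff supported in $B_{(\sigma+\tau)/2}$. Differentiating the cutoff creates lower-order terms such as $(\tau-\sigma)^{-1-\alpha}\|\nabla u\|_{\bC^\alpha(B_{(\sigma+\tau)/2})}$. These are removed by a second interpolation together with the absorption Lemma~\ref{Le26}, and that is where the factor $(\tau-\sigma)^{-\gamma}$ comes from.

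You instead prove a scale-invariant interpolation on a single ball (Taylor remainder, equivalence of norms on quadratic polynomials, dilation). You then localize by covering with balls of radius $\eps$. For $\eps\leq\tau-\sigma$ these balls fit inside $B_\tau$, so no $(\tau-\sigma)$-penalty appears at all. For $\eps>\tau-\sigma$ you simply reuse the estimate at scale $\tau-\sigma$.

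What your route buys:
\begin{itemize}
\item It needs no cutoffs, no H\"older product rules and no iteration lemma.
\item It keeps the coefficient of $[\nabla^2u]_{\bC^\alpha(B_\tau)}$ free of $(\tau-\sigma)$ automatically.
\end{itemize}
In the cutoff route that second point needs care. Substituting the bound $\|\nabla u\|_{\bC^\alpha}\lesssim[\nabla^2u]_{\bC^\alpha(B_\tau)}+\cdots$ into \eqref{DS213}, as the paper does, leaves a term $\eps^{2-j+\alpha-\beta}(\tau-\sigma)^{-1-\alpha}[\nabla^2u]_{\bC^\alpha(B_\tau)}$. So the $(\tau-\sigma)$-dependence in front of $[\nabla^2u]$ in the intermediate $\nabla u$ estimate has to be tracked more carefully than is done there. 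The paper's route, for its part, reuses the cutoff-and-absorb machinery already employed in Theorems~\ref{sobov} and~\ref{Schau}, and relies only on a quoted global inequality rather than a hands-on argument on balls.

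One simplification: the case split in your Step 3 is unnecessary. For every $\eps>\eps_0:=\tau-\sigma$ one has $\eps^{-j-\beta}\eps^{2+\alpha}\eps_0^{-(2+\alpha)}=\eps_0^{-j-\beta}(\eps/\eps_0)^{2+\alpha-j-\beta}\geq\eps_0^{-j-\beta}$, because $2+\alpha-j-\beta\geq0$. So the detour through $\gamma=j+\beta$ and the threshold $\eps=1$ can be dropped. Likewise, the uniformity of the Step~1 constant in centre and radius is immediate from translation and dilation invariance; no Lipschitz-domain considerations are needed.
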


\begin{proof}
	It is well known that the following global interpolation inequality holds (cf. \cite[Theorem 3.2.1]{Kr96}):
	for some $C=C(d,j,\beta,\alpha)>0$,
	$$
	[\nabla^j u]_\beta\lesssim_C [\nabla^2_xu]_\alpha+\| u\|_\infty.
	$$
	For $\eps>0$ define
	$$
	u_\eps(x):=u(\eps x).
	$$
	Then
	$$
	[\nabla^j u_\eps]_\beta=\eps^{j+\beta}[\nabla^j u]_\beta.
	$$
	Thus
	$$
	\eps^{j+\beta}[\nabla^j u]_\beta\lesssim \eps^{2+\alpha}[\nabla^2_xu]_\alpha+\| u\|_\infty.
	$$
	Dividing both sides by $\eps^{j+\beta}$, we derive
	\begin{align}\label{DS210}
	[\nabla^j u]_\beta\lesssim_C \eps^{2-j+\alpha-\beta}[\nabla^2_xu]_\alpha+\eps^{-j-\beta}\| u\|_\infty.
	\end{align}
	Fix $\frac{1}{2}\leq\sigma<\tau\leq1$.
	Let $\eta_0(x)$ and $\eta_1(x)$ be smooth cut-off functions supported in $B_{(\tau+\sigma)/2}$
	and $B_{\tau}$, respectively, and satisfy that
	$$
	0\leq \eta_0 \leq 1 \,\,\, {\text {in}} \,\,\, B_{(\tau+\sigma)/2},\ \
	\eta_0\equiv 1\,\, {\text {in}}  \,\,B_{\sigma},
	$$
	and
	$$
	0\leq \eta_1 \leq 1 \,\,\, {\text {in}} \,\,\, B_{\tau},\ \
	\eta_1\equiv 1\,\, {\text {in}}  \,\,B_{(\tau+\sigma)/2},
	$$
	and for any $j=0,1,2$ and $\beta\in[0,1]$, there is a $C=C(j,\beta,d)>0$ such that
	\begin{align}\label{DS211}
	[\nabla^j \eta_i]_\beta\leq C(\tau-\sigma)^{-j-\beta},\ \ i=0,1.
	\end{align}
	Let $\upsilon:=(\tau+\sigma)/2$ and write
	$$
	\|\cdot\|_{\beta;\sigma}:=\|\cdot\|_{\bC^\beta(B_\sigma)},\ \ \|\cdot\|_{\infty;\sigma}:=\|\cdot\|_{L^\infty(B_\sigma)}.
	$$
	Now by \eqref{DS210}, the chain rule and \eqref{DS211}, we have for any $\eps>0$,
	\begin{align}
	[\nabla^j u]_{\beta;\sigma}&\leq[\nabla^j(u\eta_0)]_{\beta}
	\lesssim \eps^{2-j+\alpha-\beta}[\nabla^2 (u\eta_0)]_\alpha+\eps^{-j-\beta}\| u\eta_0\|_\infty\no\\
	&\lesssim \eps^{2-j+\alpha-\beta}\left([\nabla^2_xu]_{\alpha;\upsilon}+[\nabla_xu]_{\alpha;\upsilon}\|\nabla\eta_0\|_\infty+\|\nabla_xu\|_{\infty;\upsilon}[\nabla\eta_0]_\alpha\right)
	+\eps^{-j-\beta}\| u\|_{\infty;\upsilon}\no\\
	&\lesssim \eps^{2-j+\alpha-\beta}\left([\nabla^2_xu]_{\alpha;\upsilon}+(\tau-\sigma)^{-1-\alpha}\|\nabla_xu\|_{\alpha;\upsilon}\right)
	+\eps^{-j-\beta}\| u\|_{\infty;\upsilon}.\label{DS213}
	\end{align}
	In particular, if one takes $j=1, \beta=\alpha$ and replaces $\eps$ by $(\eps (\tau-\sigma))^{1+\alpha}$, then
	$$
	[\nabla_xu]_{\alpha;\sigma}\lesssim
	\eps^{1+\alpha}\left([\nabla^2_xu]_{\alpha;\tau}+\|\nabla_xu\|_{\alpha;\tau}\right)
	+(\eps(\tau-\sigma))^{-(1+\alpha)^2}\| u\|_{\infty;\tau};
	$$
	and if one takes $j=1, \beta=0$ and replaces $\eps$ by $\eps (\tau-\sigma)$, then
	$$
	\|\nabla_xu\|_{\infty;\sigma}\lesssim
	\eps^{1+\alpha}\left([\nabla^2_xu]_{\alpha;\tau}+\|\nabla_xu\|_{\alpha;\tau}\right)
	+(\eps(\tau-\sigma))^{-1}\| u\|_{\infty;\tau}.
	$$
	Combining the above two estimates and taking $\eps$ to be small enough, we obtain
	that for some $C=C(d,\alpha)>0$ and all $\frac12\leq\sigma<\tau\leq 1$,
	$$
	\|\nabla_xu\|_{\alpha;\sigma}\leq\tfrac12\|\nabla_xu\|_{\alpha;\tau}+
	[\nabla^2_xu]_{\alpha;\tau}+C(\tau-\sigma)^{-(1+\alpha)^2}\| u\|_{\infty;\tau}.
	$$
	By Lemma \ref{Le26}, we get
	$$
	\|\nabla_xu\|_{\alpha;\upsilon}\lesssim[\nabla^2_xu]_{\alpha;\tau}+(\tau-\sigma)^{-(1+\alpha)^2}\| u\|_{\infty;\tau}.
	$$
	Substituting this into \eqref{DS213}, we obtain the desired estimate.
\end{proof}
\begin{remark}
By \eqref{Ho1}, for any $0\leq\beta\leq\alpha\leq 1$, $j=0,1,2$ and $\eps>0$, we have
$$
[\nabla^j u]_{\bC^\beta(B_{1/2})}\lesssim_C \eps^{2-j+\alpha-\beta}\|\nabla^2_xu\|_{\bC^\alpha(B_1)}
+\eps^{-j-\beta}\| u\|_{L^\infty(B_1)}.
$$
\end{remark}

\section{Kolmogorov equations with unbounded coefficients}\label{sec:grad}

This section is devoted to establishing a priori local regularity estimates for solutions to time-inhomogeneous Kolmogorov equations with unbounded and possibly singular coefficients. A principal novelty is that all bounds come with {explicit} dependence on {local} norms of the coefficients, which is crucial for applications.

Let $Q:=I\times \mathcal{O}\subset\mathbb{R} \times \mathbb{R}^{d}$ be a space-time domain, where $I=(S,T)\subset\mathbb{R}$ is an open interval and $\mathcal{O}\subset\mathbb{R}^d$ is an open domain. We consider the time-inhomogeneous Kolmogorov equation in $Q$,
\begin{align}\label{pde31}
\partial_{t}u(t,x)+\sL_t u(t,x) +f(t,x)=0,
\end{align}
where
\[
\sL_t u(x)   :=\tr\big(a(t,x)\cdot\nabla_x^2 u(x)\big)+b(t,x)\cdot\nabla_xu(x).
\]
Here $a:\mathbb{R}\times\mathbb{R}^d\to\mathbb{R}^d\otimes\mathbb{R}^d$ is a symmetric matrix-valued measurable function, and $b:\mathbb{R}\times\mathbb{R}^d\to\mathbb{R}^d$ is a measurable vector field. Throughout this section we impose the following local regularity and ellipticity assumption on $a$:

\begin{enumerate}[{\bf(H$^\alpha_a$)}]
\item Suppose that $a\in L^\infty_{\text{loc}}\big(\mathbb{R};\mathbf{C}_{\text{loc}}^{\alpha}(\mathbb{R}^{d})\big)$
for some $\alpha\in(0,1]$, and there exist two functions $0<\lambda(x)\leq\Lambda(x)<\infty$ such that,
for all $t\in\mathbb{R}$ and $x,\xi\in \mathbb{R}^{d}$,
\[
\lambda(x)|\xi|^{2} \leq \langle a(t,x)\xi,\xi\rangle \leq \Lambda(x)|\xi|^{2}.
\]
\end{enumerate}

\vspace{1mm}
We first define strong solutions to \eqref{pde31}.

\begin{definition}\label{Def31}
Let $p,q\in[1,\infty)$. A function $u\in \mW^{1,2}_{q,p;{\rm loc}}(Q)$ is called a {strong solution}
of \eqref{pde31} in $Q$ if $\sL_\cdot u + f\in L^1_{\rm loc}(Q)$ and for every
$\phi\in \mathbf{C}^\infty_c(Q)$,
\[
\langle\!\langle u,\partial_t\phi\rangle\!\rangle
=
\langle\!\langle\sL_\cdot u + f,\phi\rangle\!\rangle,
\]
where
\[
\langle\!\langle f,g\rangle\!\rangle:=\int_{Q} f(t,x)g(t,x)\dif x\dif t.
\]
Equivalently, for every $g\in\mathbf{C}_c^\infty(\mathcal{O})$ and for a.e.\ $S<t<s<T$,
\begin{align}\label{Id1}
\langle u(s),g\rangle=\langle u(t),g\rangle-\int_t^s\langle\sL_r u(r) + f(r),g\rangle\dif r,
\end{align}
where
\[
\langle f,g\rangle:=\int_{\mathcal{O}} f(x)g(x)\dif x.
\]
\end{definition}

\begin{remark}
If \eqref{Id1} holds with $s=T$ (in the above sense) and $u(T)=\varphi\in L^1_{\mathrm{loc}}(\mathcal{O})$, then $u$ is called a solution to \eqref{pde31} with terminal value $u(T)=\varphi$.
\end{remark}

To state the main results, we recall the parabolic cylinder with center $(t,x)$ and radius $R>0$:
\begin{align}\label{QR1}
\cQ_{R}(t,x):=I_{R}(t)\times B_R(x):=(t-R^2,t+R^2)\times B_R(x).
\end{align}
Moreover, we also use the following notations:
$$
\|f\|_{{\mL}^q_p(Q_{R}(t,x))}:=\|f\|_{L^q(I_R(t);L^p(B_R(x)))}
$$
and
\[
[f]_{{L}^\infty\mathbf{C}^\alpha(Q_{R}(t,x))}:=\sup_{s\in I_R(t)}[f(s,\cdot)]_{\mathbf{C}^\alpha(B_R(x))}.
\]

The first main result of this section provides an a priori pointwise gradient bound for the solution with constants depending explicitly on local norms of the coefficients.

\begin{theorem}[Gradient estimate]\label{grad1}
Suppose that {\bf (H$^\alpha_a$)} holds, and $b\in L^{q_b}_{\rm loc}(\mathbb{R}; L^{p_b}_{\rm loc}(\mathbb{R}^d))$ for some $q_b,p_b\in(1,\infty]$ with
$\theta_b:=1-\frac {d}{p_b}-\frac {2}{q_b}>0$. Let
$q\in(1,\infty)\cap(1,q_b]$ and $p\in(1,\infty)\cap(1,p_b]$ satisfy $\theta:=1-\tfrac d{p}-\tfrac 2{q}>0.$
For given $S<T$, there exists a constant $C=C(d,p,q,p_b,q_b,\alpha)>0$ such that for any solution
$u \in \mW^{1,2}_{q,p;{\rm loc}}\big((S,T)\times \mathbb{R}^{d}\big)$ of PDE \eqref{pde31} and $(t, x)\in [\frac {T+S}2,T)\times \mathbb{R}^{d}$,
\begin{align}\label{grad0}
|\nabla_{x} u(t,x)|
&\lesssim_{C}\Lambda^{1/q}_1(x)\left(\frac{\Lambda_1(x)}{\lambda(x)R }\right)^{2-\theta}
\|u\|_{\mL^q_tL^p_x(Q_{R}(t,x))}
+\frac{R^{\theta}\Lambda_1^{1/q}(x)}{\lambda(x)}\|f\|_{\mL^q_tL^p_x(Q_{R}(t,x))},
\end{align}
where $\Lambda_1(x):=\Lambda(x)+1$,
$R_{0}:=\frac{\sqrt{T-t}}{\sqrt{T-S}\vee1}$ and $R:=\frac{\Lambda_1(x) R_{0}}{\lambda(x) \cG_t(x)}$ with
\begin{align}\label{HH1}
\cG_t(x):= (\sqrt{T-S}\wedge 1)\left[\frac{ \Lambda_{1}(x)[a]^{1/\alpha}_{L^\infty\bC^\alpha(Q_{R_{0}}(t,x))}}{\lambda^{1+1/\alpha}(x)}
+\left(\frac{\|b\|_{\mL^{q_b}_{p_b}(Q_{R_0}(t,x))}}{\lambda(x)/\Lambda_1^{1/q_b}(x)}\right)^{1/\theta_b}\right]+\frac{\Lambda_{1}(x)}{\lambda(x)}.
\end{align}
\end{theorem}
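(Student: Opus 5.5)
The plan is to reduce \eqref{grad0} to a scale-invariant interior $\mL^q_p$-maximal regularity estimate on a fixed cylinder, combined with the Sobolev embedding $\mW^{1,2}_{q,p}\hookrightarrow L^\infty_t\bC^{1}_x$, which holds because $\theta=1-\frac dp-\frac2q>0$. Lemma \ref{inter1} with $p_0=q_0=\infty$ gives exactly this embedding in quantitative form. The radius $R$ will be chosen so that, after rescaling $Q_R(t,x)$ to a unit cylinder, the rescaled coefficients have small oscillation and small drift norm. This smallness is what lets the perturbation terms be absorbed.

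\textbf{Step 1: rescaling.} Fix $(t,x)$ and set
\[
v(s,y):=u\bigl(t+R^2 s/\Lambda_1(x),\,x+Ry\bigr),
\]
with the time scaled by $\Lambda_1(x)$ so that the rescaled diffusion $\tilde a:=a/\Lambda_1(x)$ satisfies $\frac{\lambda(x)}{\Lambda_1(x)}\le\tilde a\le1$ at the centre. The drift becomes $\tilde b=Rb/\Lambda_1(x)$ and the source becomes $\tilde f=R^2f/\Lambda_1(x)$, so that $\partial_s v+\tr(\tilde a\nabla^2v)+\tilde b\cdot\nabla v+\tilde f=0$ on the unit cylinder. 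We have $R\le R_0$, since $\cG_t(x)\ge\Lambda_1/\lambda$, so the rescaled cylinder sits inside $Q_{R_0}(t,x)$, where the local norms in $\cG_t$ are taken. One must also check that the forward half of the cylinder stays below $T$. This is why $R_0\le\sqrt{T-t}$ and the factor $\sqrt{T-S}\vee1$ appear, and why the cylinder may be taken one-sided in time.

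The scaling law should give two smallness conditions on the rescaled unit cylinder:
\[
[\tilde a]_{\bC^\alpha}\lesssim \frac{R^\alpha}{\Lambda_1}[a]_{\bC^\alpha}\ \ \text{small relative to}\ \frac{\lambda}{\Lambda_1},
\qquad
\|\tilde b\|_{\mL^{q_b}_{p_b}}\lesssim \frac{R^{\theta_b}\Lambda_1^{1/q_b}}{\Lambda_1}\|b\|_{\mL^{q_b}_{p_b}}\ \ \text{small relative to}\ \frac{\lambda}{\Lambda_1}.
\]
Both follow from $R\le \frac{\Lambda_1R_0}{\lambda\cG_t}$ by the definition \eqref{HH1}. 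The $1/\alpha$ and $1/\theta_b$ powers in \eqref{HH1} are chosen precisely for this purpose.

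\textbf{Step 2: local maximal regularity for the rescaled equation.} Consider the frozen constant-coefficient operator $\partial_s+\tr(\tilde a(0,0)\nabla^2)$, whose ellipticity ratio is $\Lambda_1/\lambda$. Its global $\mL^q_p$ estimate holds with a constant polynomial in $\Lambda_1/\lambda$; I would get this by a linear change of variables and track the power. Take cutoffs $\eta$ between radii $\tau<\tau'$ and write the equation for $\eta v$. The commutator terms are then:
\begin{itemize}
\item $(\tilde a-\tilde a(0,0))\nabla^2(\eta v)$, which is small by the oscillation bound;
\item $\tilde b\cdot\nabla(\eta v)$, which is handled by Hölder's inequality and Lemma \ref{inter1} to bound $\|\nabla(\eta v)\|$ in the dual exponents by $\eps\|\nabla^2\|+\eps\|\partial_s\|+C_\eps\|\eta v\|$;
\item the cutoff derivative terms, which carry powers of $(\tau'-\tau)^{-1}$.
\end{itemize}
Absorbing these and applying Lemma \ref{Le26} yields
\[
\|v\|_{\mW^{1,2}_{q,p}(Q_{1/2})}\lesssim \Bigl(\tfrac{\Lambda_1}{\lambda}\Bigr)^{\kappa}\bigl(\|v\|_{\mL^q_p(Q_1)}+\|\tilde f\|_{\mL^q_p(Q_1)}\bigr).
\]

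\textbf{Step 3: embedding and scaling back.} Apply Lemma \ref{inter1} with $p_0=q_0=\infty$ to a cutoff of $v$ to bound $|\nabla v(0,0)|$. Undoing the scaling gives $\nabla u=R^{-1}\nabla v$, and the $\mL^q_p$ norms pick up the factors $R^{-d/p-2/q}$ and $\Lambda_1^{1/q}$ from the time change. This produces the prefactors $R^{-(2-\theta)}$, after combining with the interpolation step, and $R^\theta/\lambda$ in \eqref{grad0}.

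The main obstacle is the bookkeeping in Step 2. The exponent of $\Lambda_1/\lambda$ has to come out exactly as $2-\theta$ in \eqref{grad0}. I would first try to get this by choosing $\eps$ in Lemma \ref{inter1} as a power of $\lambda/\Lambda_1$, and check that the constant in the frozen-coefficient estimate does not produce a larger power. If it does, the remedy is to rescale space anisotropically using $\tilde a(0,0)^{-1/2}$ before freezing the coefficients.
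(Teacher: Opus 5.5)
Your overall architecture matches the paper's: rescale to a unit cylinder, prove a local $\mL^q_p$ maximal regularity bound by freezing, cutoffs, Lemma~\ref{inter1} and Lemma~\ref{Le26}, then embed into $L^\infty$ via Lemma~\ref{inter1} with $p_0=q_0=\infty$. The paper does this in Theorem~\ref{sobov}, applied on a radius comparable to $R$. However, two steps fail as written, and they are exactly where the dependence on $\Lambda_1/\lambda$ is decided.

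\textbf{Freezing in time.} \textbf{(H$^\alpha_a$)} gives no time regularity of $a$: it is only $L^\infty_{\rm loc}$ in $t$, and $\cG_t(x)$ only sees $\sup_s[a(s,\cdot)]_{\bC^\alpha}$. Freezing at the space--time point $(0,0)$ therefore leaves the commutator $(\tilde a(s,0)-\tilde a(0,0))\nabla^2(\eta v)$. This term is not small compared with the rescaled ellipticity $\lambda/\Lambda_1$, so it cannot be absorbed. You must freeze in space only and work with $\partial_s+\tr(\tilde a(s,0)\nabla^2)$, whose matrix is merely measurable in time. For that operator your way of tracking ellipticity breaks down: neither a linear change of variables nor the anisotropic rescaling by $\tilde a(0,0)^{-1/2}$ works, because no single linear map normalizes $a(s,0)$ for all $s$. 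The available results for time-measurable coefficients give no explicit dependence on $\Lambda/\lambda$. The missing ingredient is Lemma~\ref{lemma31}: for $\partial_t+\tr(a(t)\nabla^2)$ one has $\|\nabla^2u\|_{q,p}\le C(d,p,q)\lambda^{-1}\|f\|_{q,p}$, independent of $\Lambda$ and of any time regularity. The paper proves it by splitting $a(t)=(a(t)-\lambda I_d)+\lambda I_d$. It then writes $u$ as an average, over an independent Gaussian translation $X^{(1)}_{0,t}$, of the solution for $\lambda\Delta$ with translated forcing. Translation invariance of $\mL^q_p$ norms and Fubini finish the argument. This sharp $\lambda^{-1}$ is what makes your $\kappa$ equal to $2$, which in turn gives the exponent $2-\theta$ after optimizing in the embedding.

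\textbf{The drift is not small.} On $Q_R(t,x)$, using $R_0\le\sqrt{T-S}\wedge1$, \eqref{HH1} only gives $R\,\bigl(\Lambda_1^{1/q_b}\|b\|_{\mL^{q_b}_{p_b}(Q_R(t,x))}/\lambda\bigr)^{1/\theta_b}\le\Lambda_1/\lambda$. In your normalization this reads $\|\tilde b\|_{\mL^{q_b}_{p_b}(Q_1)}\le(\Lambda_1/\lambda)^{\theta_b}\,\lambda/\Lambda_1$. So the ratio of rescaled drift to rescaled ellipticity $\tilde\lambda:=\lambda/\Lambda_1$ can be as large as $(\Lambda_1/\lambda)^{\theta_b}$. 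Shrinking the radius until the drift is genuinely small costs extra powers of $\Lambda_1/\lambda$ and does not give \eqref{grad0}. The drift must instead be handled at its actual size:
\begin{itemize}
\item Take the interpolation parameter $\eps\simeq\tilde\lambda/\|\tilde b\|$ in Lemma~\ref{inter1}. This leaves the lower-order term $(\|\tilde b\|/\tilde\lambda)^{2/\theta_b}\|v\|\le(\Lambda_1/\lambda)^2\|v\|$.
\item $\partial_s v$ is controlled only through the equation, which brings back $\|\tilde b\|\,\|\nabla v\|_{q_1,p_1}$. So, as the paper does, run Lemma~\ref{Le26} on the combined quantity $\|\nabla^2 u\|_{q,p;\sigma}+\lambda^{-1}\|b\|_{q_b,p_b;1}\|\nabla u\|_{q_1,p_1;\sigma}$.
\end{itemize}
Similarly, \eqref{HH1} gives $R^\alpha[a]\le\lambda$ rather than $\le\delta\lambda$. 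This is harmless: apply the local estimate on a radius $R_1\in[\delta^{1/\alpha}R,R]$.
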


\begin{remark}
Suppose that $a(t,x)=\lambda I_d$ and $b\in L^{q_b}_{\text{loc}}\big(\mathbb{R}; L^{p_b}_{\text{loc}}(\mathbb{R}^{d})\big)$ for some $p_b,q_b\in(1,\infty]$ with $d/p_b+2/q_b<1$. Note that
\[
\|u\|_{\mL^q_tL^p_x(Q_{R}(t,x))}\lesssim R^{d/p+2/q}\|u\|_{\mL^\infty(Q_{R}(t,x))}=R^{1-\theta}\|u\|_{\mL^\infty(Q_{R}(t,x))}.
\]
By \eqref{grad0}, there exists $C=C(d,p,q,p_b,q_b,\lambda)>0$ such that for every
$(t,x)\in(\frac {T+S}2,T)\times\mathbb{R}^d$,
\[
|\nabla_{x} u(t,x)|\lesssim_{C}R_0^{-1}\left(\|b\|^{1/\theta_b}_{\mL^{q_b}_{p_b}(Q_{R_0}(t,x))}+1\right)\|u\|_{\mL^\infty(Q_{R_0}(t,x))}
+R_0\|f\|_{\mL^\infty(Q_{R_0}(t,x))},
\]
where $R_0:=\sqrt{T-t}/(\sqrt{T-S}\vee1)$ and $\theta_b=1-d/p_b-2/q_b$.
It is worth noting that by linearization, the above estimate could be used to derive some a priori regularity estimates for HJB equations (see \cite{ZZZ22}).
\end{remark}

When the drift $b$ is locally H\"older continuous, we obtain the following pointwise Hessian and Schauder estimates.

\begin{theorem}[Hessian estimate]\label{grad3}
Suppose that {\bf (H$^\alpha_a$)} holds and $b\in L^\infty_{\mathrm{loc}}\big(\mathbb{R};\mathbf{C}^\alpha_{\mathrm{loc}}(\mathbb{R}^{d})\big)$ with the same $\alpha$ as in {\bf (H$^\alpha_a$)}.
For given $S<T$,
there exists $C=C(d,\alpha)>0$ such that for any solution $u \in L^{\infty}_{\rm loc}\big((S,T);\mathbf{C}_{\mathrm{loc}}^{2+\alpha}(\mathbb{R}^{d})\big)$ of PDE \eqref{pde31} and $(t, x)\in [\frac {T+S}2,T)\times \mathbb{R}^{d}$,
\begin{align}\label{Ds0}
|\nabla_{x}^{2}u(t,x)|\lesssim_C\left(\frac{\Lambda_1(x)}{\lambda(x)R}\right)^{2}
\|u\|_{L^\infty(Q_{R}(t,x))}
+\frac{\|f\|_{L^\infty\bC^\alpha(Q_{R}(t,x))}}{\lambda(x)},
\end{align}
where $\Lambda_1(x):=\Lambda(x)+1$, $R_{0}:=\frac{\sqrt{T-t}}{\sqrt{T-S}\vee1}$ and $R:=\frac{\Lambda_{1}(x)R_0}{\lambda(x)\cH(x)}$ with
\begin{align}\label{HH2}
\cH(x):=(\sqrt {T-S}\wedge 1)\left[
\frac{ \Lambda_{1}(x)[a]^{1/\alpha}_{L^\infty([S,T];\bC^\alpha(B_1(x)))}}{\lambda^{1+1/\alpha}(x)}
+\frac{\|b\|_{L^\infty([S,T];\bC^\alpha(B_1(x)))}}  {\lambda(x)}\right]+\frac{\Lambda_{1}(x)}{\lambda(x)}.
\end{align}
If, in addition, there exist constants $c_0\in(0,1]$ such that for all $x\in\mathbb{R}^d$,
\begin{align}\label{LL2}
c_0\lambda(x)\leq \inf_{y\in B_1(x)}\lambda(y)\leq\sup_{y\in B_1(x)}\Lambda(y)\leq c_0^{-1}\Lambda(x),
\end{align}
then there exists $C=C(d,\alpha,c_0)>0$ such that for every $(t, x)\in [\frac {T+S}2,T)\times \mathbb{R}^{d}$,
\begin{align}\label{Da3}
\begin{split}
[\nabla_{x}^{2}u(t,\cdot)]_{\bC^\alpha(B_{1/2}(x))}
&\lesssim_C \left(\frac{\Lambda_1(x)}{\lambda(x)R}\right)^{2+\alpha}{\|u\|_{L^\infty(I_{R}(t)\times B_1(x))}}\\
&+\frac{\|f\|_{L^\infty(I_{R}(t)\times B_1(x))}+R^\alpha[f]_{L^\infty(I_{R}(t); \mathbf{C}^{\alpha}(B_1(x)))}}{\lambda(x)R^\alpha}.
\end{split}
\end{align}
\end{theorem}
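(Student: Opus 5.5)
The plan is to derive both estimates in Theorem~\ref{grad3} from a local Schauder estimate on a small parabolic cylinder. The estimate is obtained by freezing coefficients, is made explicit in its dependence on coefficients by a parabolic rescaling, and is then closed with the H\"older interpolation of Lemma~\ref{inter11} and the iteration Lemma~\ref{Le26}. This is the same scheme used for Theorem~\ref{grad1}, with $L^q_tL^p_x$ maximal regularity replaced by Schauder theory.

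\textbf{Step 1 (normalization).} Fix $(t,x)$ and write $\lambda=\lambda(x)$, $\Lambda_1=\Lambda_1(x)$. Define
\[
v(s,y):=u\big(t+R^2 s/\lambda,\ x+Ry\big),\qquad s\in(-1,0],\ y\in B_1 .
\]
Then $v$ solves $\partial_s v+\tilde a:\nabla^2 v+\tilde b\cdot\nabla v+\tilde f=0$ with
\[
\tilde a=a/\lambda,\qquad \tilde b=Rb/\lambda,\qquad \tilde f=R^2 f/\lambda .
\]
Only backward times up to $t$ are used, together with a short forward piece of length $\lesssim R^2/\lambda$. The factor $R_0\le\sqrt{T-t}$ guarantees that this time window fits inside $(S,T)$, because $R\le \lambda R_0/\Lambda_1\cdot(\Lambda_1/\lambda)^{2}\cdots$; more precisely $\cH\ge \Lambda_1/\lambda$ gives $R\le R_0$.

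After the rescaling, $\tilde a$ has ellipticity constants in $[1,\Lambda/\lambda]$, and
\[
[\tilde a]_{\bC^\alpha(B_1)}=R^\alpha[a]_\alpha/\lambda,\qquad \|\tilde b\|_{\bC^\alpha}\lesssim R\|b\|_{\bC^\alpha}/\lambda .
\]
The choice $R=\Lambda_1R_0/(\lambda\cH)$ is made precisely so that $R^\alpha[a]_\alpha\Lambda_1/\lambda^{1+\alpha}\cdot(\Lambda_1/\lambda)^{\dots}$ and $R\|b\|_{\bC^\alpha}/\lambda$ are bounded by a universal multiple of the appropriate power of the ellipticity ratio $\Lambda_1/\lambda$.

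\textbf{Step 2 (local Schauder estimate with explicit constants).} For nested cylinders $Q_\sigma\subset Q_\tau$ with $\frac12\le\sigma<\tau\le1$, take a cutoff $\eta$ and freeze $\tilde a$ at the center. The classical Schauder estimate for constant-coefficient operators, with constants depending polynomially on the ellipticity ratio, then bounds $[\nabla^2 v]_{\bC^\alpha(Q_\sigma)}$ by three groups of terms:
\begin{itemize}
\item $[\tilde a]_\alpha\,\tau^\alpha\,[\nabla^2 v]_{\alpha;\tau}$, coming from the oscillation of $\tilde a$; the small $[\tilde a]_\alpha$ is absorbed here;
\item $\|\tilde b\|_\alpha\,\|\nabla v\|_{\alpha;\tau}$;
\item commutator terms with $\eta$, of size $(\tau-\sigma)^{-\gamma}\|v\|_{\bC^{1+\alpha}}$.
\end{itemize}
Lemma~\ref{inter11} converts every lower-order norm into $\eps[\nabla^2v]_{\alpha;\tau}+C_\eps(\tau-\sigma)^{-\gamma}\|v\|_{L^\infty(Q_\tau)}$. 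Choosing $\eps$ as a suitable power of $\lambda/\Lambda_1$ gives a coefficient $\vartheta<1$ in front of $h(\tau)=[\nabla^2v]_{\alpha;\tau}$, and Lemma~\ref{Le26} yields
\[
[\nabla^2 v]_{\alpha;1/2}\lesssim (\Lambda_1/\lambda)^{k}\|v\|_{L^\infty(Q_1)}+\|\tilde f\|_{\bC^\alpha}.
\]
Interpolating once more gives $|\nabla^2 v(0,0)|$. Scaling back with $\nabla^2u=R^{-2}\nabla^2 v$ and $\tilde f=R^2f/\lambda$ produces \eqref{Ds0}. The prefactor $(\Lambda_1/(\lambda R))^2$ comes from absorbing the powers of the ratio into $R$ through its definition.

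\textbf{Step 3 (the H\"older seminorm on $B_{1/2}(x)$).} Under \eqref{LL2}, the quantities $\lambda(y)$, $\Lambda(y)$ are comparable to $\lambda(x)$, $\Lambda(x)$ for every $y\in B_1(x)$, with constants depending on $c_0$. Hence $R(y)\asymp R(x)$ after slightly shrinking, and the rescaled estimate from Step~2 holds uniformly at all centers $y\in B_{1/2}(x)$.

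To bound $|\nabla^2u(t,y)-\nabla^2u(t,z)|/|y-z|^\alpha$ we split into two cases:
\begin{itemize}
\item If $|y-z|<R/2$, use the rescaled H\"older estimate from Step~2 centered at $y$. This contributes $R^{-2-\alpha}(\Lambda_1/\lambda)^{2+\alpha}\|u\|_\infty+\lambda^{-1}(R^{-\alpha}\|f\|_\infty+[f]_\alpha)$.
\item If $|y-z|\ge R/2$, bound the difference by $2\sup|\nabla^2u|\,(R/2)^{-\alpha}$ and apply \eqref{Ds0} at both points.
\end{itemize}
Together the two cases give \eqref{Da3}, with the spatial domain $B_1(x)$ and time window $I_R(t)$.

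\textbf{Main obstacle.} The delicate step is Step~2: tracking exactly how the constant-coefficient Schauder constant and the absorption step depend on the ellipticity ratio $\Lambda/\lambda$, so that after the rescaling, every power of the ratio is captured by the stated exponents $2$ and $2+\alpha$. My first attempt would be to perform a further linear change of variables $y\mapsto \tilde a(0,0)^{-1/2}y$, which reduces the frozen operator to the Laplacian. The ratio then enters only through the distortion of the balls, and the needed enlargement is of size $\sqrt{\Lambda_1/\lambda}$, which the $\Lambda_1/\lambda$ factor in $R$ accommodates.
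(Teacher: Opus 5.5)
Your architecture matches the paper's: freeze the coefficients, cut off, interpolate via Lemma~\ref{inter11}, iterate via Lemma~\ref{Le26}, choose the radius so a smallness condition holds, interpolate once more for the pointwise bound, then split near/far for the H\"older seminorm. The gap is exactly the step you flag as the main obstacle, and the fix you suggest fails.

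\textbf{(H$^\alpha_a$)} gives no regularity of $a$ in time, so you cannot freeze at the space--time centre. The error $a(s,0)-a(0,0)$ is not small, and no single linear map $y\mapsto\tilde a(0,0)^{-1/2}y$ normalizes $\tilde a(s,0)$ for all $s$. You must freeze only in space. That requires a global Schauder estimate for $\partial_s+\tr(a(s)\nabla^2)$ with $a(\cdot)$ merely measurable, whose constant is $C(d,\alpha)\lambda^{-1}$ with \emph{no} dependence on $\Lambda$. Suppose instead the constant carries a factor $(\Lambda/\lambda)^k$. Then absorbing $\tr((a(s,y)-a(s,0))\nabla^2u)$ requires $R^\alpha[a]_\alpha\leq\delta(\lambda/\Lambda)^k\lambda$. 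The prescribed $R=\Lambda_1R_0/(\lambda\cH)$ only gives $R^\alpha[a]_\alpha\leq\lambda$, since $R_0\leq\sqrt{T-S}\wedge 1$. Shrinking the radius then produces extra powers of $\Lambda_1/\lambda$, which the exponents $2$ and $2+\alpha$ with $C=C(d,\alpha)$ do not permit. Even at a fixed time, a linear change of variables distorts balls and $\bC^\alpha$ seminorms by powers of $\Lambda/\lambda$, and you have not tracked these.

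The missing ingredient is Lemma~\ref{lemma31}(ii):
\begin{itemize}
\item Split $a(s)=(a(s)-\lambda I_d)+\lambda I_d$ and write the Gaussian increment as a sum of two independent ones.
\item The solution is then an expectation, over random spatial translations by $\int\sqrt{2(a-\lambda I_d)}\,\dif W^{(1)}$, of solutions of the heat equation with diffusivity $\lambda$ and translated source.
\item Translations preserve $[\cdot]_\alpha$, so $[\nabla^2u]_{\infty,\alpha}\lesssim_{C(d,\alpha)}\lambda^{-1}[f]_{\infty,\alpha}$ regardless of $\Lambda$.
\end{itemize}
With this, Theorem~\ref{Schau} needs only $R^\alpha[a]_{\infty,\alpha;R}\leq\delta(d,\alpha)\lambda$, and $\Lambda$ enters only through the cutoff commutators, as $\Lambda_1$. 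The exponent $2+\alpha$, and after interpolating with $\eps=\lambda/\Lambda_1$ the exponent $2$, then come out exactly.

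Two smaller points. First, your time rescaling $t+R^2s/\lambda$ uses the window $(t-R^2/\lambda,t]$. For $\lambda<1$ this exceeds $I_R(t)$ and need not lie in $(S,T)$, because $R\leq R_0$ only gives $R^2\leq T-t$. Use the plain parabolic scaling $(t+R^2s,\,x+Ry)$ and keep $\lambda$ explicit; the $+1$ in $\Lambda_1$ absorbs $\partial_t\eta$. Second, in Step~3 the claim $R(y)\asymp R(x)$ is unjustified, since $\cH(y)$ involves norms on $B_1(y)\not\subset B_1(x)$. Fix one radius $\asymp R$ built from the norms on $B_1(x)$, and use \eqref{LL2} only for $\lambda(y),\Lambda(y)$.
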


\subsection{Local $L^q_tL^p_x$-maximal regularity and Schauder's estimates}

The purpose of this subsection is to establish a priori local Sobolev (maximal $L^q_tL^p_x$-regularity) and Schauder-type estimates for solutions to \eqref{pde31}. These estimates will later be combined with embedding arguments to derive pointwise bounds on $\nabla u$ and $\nabla^2 u$.

Let $a(t):\mathbb{R}\to\mathbb{R}^d\otimes\mathbb{R}^d$ be a bounded, symmetric, positive-definite matrix-valued function. For $t<s$ define
\[
X_{t,s}:=\int^s_t \sqrt{2a(r)}\,\dif W_r,
\]
where $(W_r)_{r\in\mathbb{R}}$ is a Brownian motion, and for $\varphi\in C^2_b(\mathbb{R}^d)$,
\[
P_{t,s}\varphi(x):=\mathbb{E}\,\varphi(x+X_{t,s}).
\]
It\^o's formula yields, for Lebesgue almost every $s>t$,
\[
\partial_sP_{t,s}\varphi(x)=\tr\big(a(s)\cdot\nabla^2_xP_{t,s}\varphi(x)\big),
\]
and for Lebesgue almost every $t<s$,
\[
\partial_tP_{t,s}\varphi(x)=-\tr\big(a(t)\cdot\nabla^2_xP_{t,s}\varphi(x)\big).
\]
Moreover, if $f\in C_c(\mathbb{R}; C^2_b(\mathbb{R}^{d}))$ and we set for $t\in\mathbb{R}$,
\[
u(t,x):=\int^{\infty}_tP_{t,s}f(s, x)\dif s=\int^{\infty}_t\mathbb{E}\, f(s, x+X_{t,s})\dif s,
\]
then $u$ satisfies the following equation on $\mathbb{R}\times\mathbb{R}^d$:
\begin{align}\label{pde32}
\partial_{t}u(t,x)+\tr\big(a(t)\cdot\nabla^2_xu(t,x)\big)+f(t,x)=0.
\end{align}

We first employ probabilistic methods to study solutions of \eqref{pde32}. The novelty lies in the explicit dependence of the estimates on the ellipticity constant.

\begin{lemma}\label{lemma31}
Assume that for some $0< \lambda\leq\Lambda<\infty$,
\[
\lambda |\xi|^{2} \leq \langle a(t)\xi,\xi\rangle \leq \Lambda |\xi|^{2}, \qquad \forall \, t\in \mathbb{R},\ \xi\in\mathbb{R}^d.
\]
\begin{enumerate}[(i)]
\item \textbf{($L^q_tL^p_x$-maximal regularity estimate)}
For any $q,p\in(1,\infty)$ there exists a constant $C=C(d,q,p)>0$ such that for all $f\in L^q(\mathbb{R}; L^p(\mathbb{R}^d))$,
\begin{align}\label{aaa}
\|\nabla_{x}^2 u\|_{q,p}\lesssim_C \lambda^{-1}\|f\|_{q,p}.
\end{align}

\item \textbf{(Schauder estimate)}
For any $\alpha\in(0,1)$ there exists a constant $C=C(d,\alpha)>0$ such that for all $f\in C_c(\mathbb{R}; \bC^2(\mathbb{R}^{d}))$,
\begin{align}\label{aa0}
[\nabla_{x}^2 u]_{\infty,\alpha}\lesssim_{C} \lambda^{-1}[f]_{\infty,\alpha},
\end{align}
where $[f]_{\infty,\alpha}:=\sup_{t\in \mathbb{R}}[f(t,\cdot)]_{\mathbf{C}^{\alpha}(\mathbb{R}^{d})}$.
\end{enumerate}
\end{lemma}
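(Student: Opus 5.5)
The plan is to reduce both estimates to the case $\lambda=1$, where they are classical. The reduction should be done by a time change that uses only the lower ellipticity bound, so that the constant does not depend on $\Lambda$.

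\textbf{Reduction.} Write $a(t)=\lambda\,\tilde a(t)$, so that $|\xi|^2\le\langle\tilde a(t)\xi,\xi\rangle\le(\Lambda/\lambda)|\xi|^2$. Rescaling time by $\lambda$ is harmless, but the upper bound $\Lambda/\lambda$ could still enter the constants. To avoid this, split the Gaussian increment. Since $a(r)-\tfrac{\lambda}{2}I\ge \tfrac{\lambda}{2}I$, we can write $X_{t,s}\overset{d}{=}\sqrt{\lambda}\,(B_s-B_t)+Y_{t,s}$, where $B$ is a standard Brownian motion and $Y_{t,s}=\int_t^s\sqrt{2(a(r)-\tfrac{\lambda}{2}I)}\,\dif W'_r$ is independent of $B$. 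Because $a$ depends only on time, $Y_{t,s}$ is a centered Gaussian vector that does not depend on $x$. Hence
\[
u(t,x)=\mathbb{E}\,\tilde u\big(t,x+\cdots\big)
\]
is not directly available, because $Y$ depends on $t$ and $s$ jointly. We therefore argue instead via the Fourier symbol.

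\textbf{(i) $L^q_tL^p_x$ estimate.} Take the Fourier transform in $x$:
\[
\hat u(t,\xi)=\int_t^\infty \exp\Big(-\int_t^s\langle a(r)\xi,\xi\rangle\,\dif r\Big)\hat f(s,\xi)\,\dif s,
\]
so that $\nabla^2u=\mathcal K f$, where $\mathcal K$ is a space-time operator with kernel
\[
K(t,s,x)=\mathbf 1_{s>t}\,\nabla^2 p_{t,s}(x),
\]
and $p_{t,s}$ is the Gaussian density with covariance $2\int_t^s a$. Substituting $t\mapsto t/\lambda$ shows that $\lambda\mathcal K$ is the corresponding operator for $a/\lambda\ge I$. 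We then follow Krylov's proof of parabolic maximal regularity for time-measurable coefficients. First, $L^2_{t,x}$ boundedness follows from Plancherel: for fixed $\xi$, the operator $g\mapsto\int_t^\infty|\xi|^2e^{-\int_t^s\langle\tilde a\xi,\xi\rangle}g(s)\,\dif s$ is bounded on $L^2(\mathrm{d}t)$ with norm at most $1$ by Young's inequality, since its kernel satisfies
\[
|\xi|^2e^{-\int_t^s\langle\tilde a\xi,\xi\rangle}\le|\xi|^2e^{-|\xi|^2(s-t)},
\]
whose $\mathrm{d}s$-integral is at most $1$. Second, we verify the Hörmander condition in parabolic metric for $K$ using Gaussian bounds. \textbf{This is the main obstacle}, because $\Lambda/\lambda$ appears in the upper Gaussian bounds. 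We handle it in the $x$-variable, not through pointwise kernel bounds. For fixed $s>t$,
\[
\int|\nabla^3_x p_{t,s}(x)|\,\dif x=\mathbb{E}|\nabla^3\phi(Z)|\cdot\big|\Sigma^{-1/2}\big|^3
\]
is controlled by $\big(\lambda(s-t)\big)^{-3/2}$, since only the smallest eigenvalue of the covariance $\Sigma=2\int_t^s a$ enters. Time-regularity is handled in the same way, because $\partial_t p_{t,s}=-\tr(a(t)\nabla^2p_{t,s})$ is a derivative against $a(t)$ only. These estimates give the mixed-norm Calderón–Zygmund bound: first $L^p_x$ for each time via the $L^2$ bound, then $L^q_t$ by vector-valued Calderón–Zygmund in $t$ with the operator-valued kernel $L^p_x\to L^p_x$ and norm $\lesssim (s-t)^{-1}$. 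This is Krylov's scheme, and the constant depends only on $d,p,q$ once $\lambda=1$.

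\textbf{(ii) Schauder estimate.} Use the probabilistic representation directly. For fixed $t$,
\[
\nabla^2u(t,x)=\int_t^\infty \nabla^2P_{t,s}f(s,x)\,\dif s.
\]
Split the integral at $s-t=\delta:=|x-y|^2/\lambda$. Using a Gaussian integration by parts with respect to $\Sigma_{t,s}\ge2\lambda(s-t)I$ together with $\int\nabla^2p=0$, we get
\[
|\nabla^2P_{t,s}f(s,x)|\lesssim[f]_\alpha\big(\lambda(s-t)\big)^{-1+\alpha/2},
\]
and similarly $|\nabla^3P_{t,s}f|\lesssim[f]_\alpha\big(\lambda(s-t)\big)^{-3/2+\alpha/2}$. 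Integrating the first bound over $s-t<\delta$ and applying the mean value theorem with the second bound for $s-t>\delta$ yields
\[
|\nabla^2u(t,x)-\nabla^2u(t,y)|\lesssim\lambda^{-1}[f]_{\infty,\alpha}|x-y|^\alpha,
\]
as claimed. Here too only the lower eigenvalue enters, because the moments of $\Sigma^{-1/2}Z$ are bounded by $|\Sigma^{-1/2}|$.
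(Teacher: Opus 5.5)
\textbf{Part (i) has a genuine gap, at the step you yourself flag as the main obstacle.} The point of the lemma is that $C$ does not depend on $\Lambda$; Theorem~\ref{sobov} later tracks $\Lambda_1$ explicitly. The Calder\'on--Zygmund route cannot deliver this, because the H\"ormander constants of $K(t,s)=\mathbf 1_{s>t}\nabla^2p_{t,s}$ themselves grow with $\Lambda/\lambda$.

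Your $L^1$ bound on $\nabla^3p_{t,s}$ sees only the smallest eigenvalue of $\Sigma_{t,s}$, but the time increments do not. Indeed, $\int|\partial_tp_{t,s}|\,\dif x$ is of the order of $|\Sigma_{t,s}^{-1/2}a(t)\Sigma_{t,s}^{-1/2}|$, which can be as large as $\Lambda/(\lambda(s-t))$. Concretely, let $a=\Lambda I_d$ on $(t,t+h)$ and $a=\lambda I_d$ elsewhere. Then $K(t,s)-K(t+h,s)=(e^{\Lambda h\Delta}-I)\nabla^2e^{\lambda(s-t-h)\Delta}$, whose symbol shows that its $L^p$ operator norm is $\gtrsim(\lambda(s-t))^{-1}$ for $2h<s-t<\Lambda h/\lambda$. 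Hence $\lambda\int_{s-t>2h}\|K(t,s)-K(t+h,s)\|\,\dif s\gtrsim\log(\Lambda/\lambda)$.

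The same happens in space. The part of the space-time H\"ormander integral where $|s-t|<r^2$ but $|x-y|>2r$ is governed by the tails of $p_{t,s}$, i.e.\ by the \emph{largest} eigenvalue. Already for constant $a=\mathrm{diag}(\lambda,\Lambda)$ this part is of order $\log(\Lambda/\lambda)$. So your scheme gives at best $C=C(d,p,q,\Lambda/\lambda)$, and your claim that the constant depends only on $d,p,q$ once $\lambda=1$ is unsupported. Separately, ``$L^p_x$ for each time via the $L^2$ bound'' is not meaningful: $f\mapsto\nabla^2u(t)$ is not an operator on a single time slice, and Krylov's scheme goes through $L^p_{t,x}$ first.

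\textbf{The missing idea is the decomposition you discarded.} The obstruction you saw, that $Y_{t,s}$ depends on $(t,s)$ jointly, is not there. Take $Y$ to be a two-sided process, so that $Y_{t,s}=Y_{0,s}-Y_{0,t}$, independent of the $\lambda$-Brownian part $X^{(2)}$. Then $u(t,x)=\mathbb{E}\,\tilde u(t,x-Y_{0,t})$ with $\tilde u(t,x)=\int_t^\infty P^{(2)}_{t,s}[f(s,\cdot+Y_{0,s})](x)\,\dif s$. For each fixed path, $\tilde u$ solves the constant-coefficient equation $\partial_t\tilde u+\lambda\Delta\tilde u+f(t,\cdot+Y_{0,t})=0$. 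Translations preserve $L^p_x$ norms and H\"older seminorms, so Minkowski/Jensen and Fubini reduce both (i) and (ii) to the case $a=\lambda I_d$. That case follows from the heat equation by the scaling $x\mapsto\sqrt\lambda\,x$, and $\Lambda$ never enters. This is the paper's proof.

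\textbf{Part (ii): your route works, but the key bound is misjustified.} Your direct kernel splitting at $s-t=|x-y|^2/\lambda$ is a legitimate alternative. However, the naive estimate $[f]_\alpha\int|\nabla^2p_{t,s}(z)||z|^\alpha\,\dif z\approx[f]_\alpha|\Sigma^{-1/2}|^2|\Sigma^{1/2}|^\alpha$ involves moments of $\Sigma^{1/2}Z$, not $\Sigma^{-1/2}Z$. It therefore carries a factor $(\Lambda(s-t))^{\alpha/2}$.

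The bound $|\nabla^2P_{t,s}f|\lesssim[f]_\alpha(\lambda(s-t))^{-1+\alpha/2}$ is nevertheless true. Work in the eigenbasis of $\Sigma_{t,s}$, with eigenvalues $\mu_k\geq2\lambda(s-t)$, where the density factorizes. Subtract $f(x-z+z_je_j)$, which does not depend on $z_j$ and so integrates to zero against the $\partial_j$-factor. This gives $|\partial_{ij}P_{t,s}f|\lesssim[f]_\alpha\mu_i^{-1/2}\mu_j^{-1/2+\alpha/2}\leq[f]_\alpha\mu_{\min}^{-1+\alpha/2}$, and similarly for third derivatives. With this fix, your proof of (ii) goes through with $C=C(d,\alpha)$.
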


\begin{proof}
(i) By a standard density argument we may assume $f\in C_c(\mathbb{R}; C^2_c(\mathbb{R}^{d}))$.
First consider the case $a(t)=\lambda I_d$ with $\lambda>0$. Define
\[
u_\lambda(t,x):=u(t, \sqrt{\lambda}x),\qquad f_\lambda(t,x):=f(t,\sqrt{\lambda}x).
\]
The chain rule gives
\[
\partial_t u_\lambda(t,x)+\Delta u_\lambda(t,x)+f_\lambda(t,x)=0.
\]
By \cite[p.107, Theorem~7]{Kr08} there exists a constant $C=C(d,q,p)>0$ such that
\[
\|\nabla_{x}^2 u_\lambda\|_{q,p}\lesssim_ C\|f_\lambda\|_{q,p}.
\]
A change of variables then yields, for every $\lambda>0$,
\begin{align}\label{bb0}
\|\nabla_{x}^2 u\|_{q,p}\lesssim_ C \lambda^{-1}\|f\|_{q,p}.
\end{align}

Now let $W^{(1)}_t$ and $W^{(2)}_t$ be two independent $d$-dimensional standard Brownian motions. For $t<s$ set
\[
X^{(1)}_{t,s}:=\int^s_t \sqrt{2\bigl(a(r)-\lambda I_d\bigr)}\,\dif W^{(1)}_r,\quad
X^{(2)}_{t,s}:=\sqrt{2\lambda}\,(W^{(2)}_s-W^{(2)}_t).
\]
Since the ellipticity lower bound implies $a(r)-\lambda I_d$ is positive semidefinite, $\sqrt{a(r)-\lambda I_d}$ is well-defined.
For $\varphi\in C^2_b(\mathbb{R}^d)$, we also set
\[
P^{(i)}_{t,s}\varphi(x):=\mathbb{E}\,\varphi(x+X^{(i)}_{t,s}),\quad i=1,2.
\]
Since $X^{(1)}_{t,s}$ and $X^{(2)}_{t,s}$ are independent,
\[
P_{t,s}\varphi(x)=P^{(1)}_{t,s}P^{(2)}_{t,s}\varphi(x).
\]
Hence
\[
u(t,x)=\int^{\infty}_t P_{t,s}f(s, x)\dif s
      =\int^{\infty}_t \mathbb{E}\Bigl(P^{(2)}_{t,s}f\bigl(s, x+X^{(1)}_{0,s}-X^{(1)}_{0,t}\bigr)\Bigr)\dif s
      =:\mathbb{E}\,\widetilde u\bigl(t,x-X^{(1)}_{0,t}\bigr),
\]
where we adopt the convention $X_{0,s}:=-X_{s,0}$ for $s<0$, and
\[
\widetilde u(t,x):=\int^{\infty}_t P^{(2)}_{t,s}f\bigl(s, x+X^{(1)}_{0,s}\bigr)\dif s.
\]
Using Fubini's theorem and estimate \eqref{bb0}, we obtain
\begin{align*}
\int_{\mathbb{R}}\|\nabla^2_x u(t)\|^q_p\dif t
&\leq \mathbb{E}\!\left(\int_{\mathbb{R}}\| \nabla^2_x\widetilde u(t,\cdot+X^{(1)}_{0,t})\|^q_p\dif t\right)
   =\mathbb{E}\!\left(\int_{\mathbb{R}}\| \nabla^2_x\widetilde u(t)\|^q_p\dif t\right)\\
&\lesssim\lambda^{-q}\,\mathbb{E}\!\left(\int_{\mathbb{R}}\|f(s, \cdot+X^{(1)}_{0,s})\|_p^q\dif s\right)
   =\lambda^{-q}\|f\|^q_{q,p},
\end{align*}
which proves \eqref{aaa}.

(ii) Estimate \eqref{aa0} follows by the same decomposition argument, combined with the Schauder estimate for the constant diffusion $\lambda I_d$ and the stability under translations; see \cite[Theorem~3.2]{HWZ20}. The factor $\lambda^{-1}$ is obtained via the same scaling as in (i).
\end{proof}

\begin{remark}
The case of variable coefficients has also been treated in \cite{Kr08} and \cite{HWZ20}; however, the explicit dependence on $\lambda$ is not provided there.
\end{remark}

Fix a point $(t,x)\in \mathbb{R}\times \mathbb{R}^{d}$ and recall the parabolic cylinder $Q_R(t,x)$ with centre $(t,x)$ and radius $R>0$ defined in \eqref{QR1}. For brevity, we shall write for $q,p\in[1,\infty]$,
\begin{align*}
\|f\|_{q,p;R}:=\|f\|_{\mL^q_tL^p_x(Q_R(t,x))}.
\end{align*}
We now prove the following a-priori local $L^q_tL^p_x$-maximal regularity estimate.

\begin{theorem}[Local $L^q_tL^p_x$-maximal regularity estimate]\label{sobov}
Suppose that $b\in L^{q_b}_{\rm loc}(\mathbb{R}; L^{p_b}_{\rm loc}(\mathbb{R}^d))$ for some $q_b,p_b\in(1,\infty]$ with
$\theta_b:=1-\frac {d}{p_b}-\frac {2}{q_b}>0$. 
Fix $R>0$ and $(t,x)\in \mathbb{R} \times \mathbb{R}^{d}$.
We also assume that for some $0<\lambda\leq \Lambda<\infty$,
\[
\lambda|\xi|^2\leq \langle a(s,x)\xi, \xi\rangle\leq	\Lambda|\xi|^2,\quad \forall s\in I_R(t),\ \xi\in\mathbb{R}^{d},
\]
and that for a sufficiently small $\delta=\delta(d,p,q)\in(0,1)$,
\begin{align}\label{As1}
\sup_{(s,y)\in Q_{R}(t,x)}\|a(s,y)-a(s,x)\|_{\mathrm{HS}}\leq \delta \lambda.
\end{align}
For any
$q\in(1,\infty)\cap(1,q_b]$ and $p\in(1,\infty)\cap(1,p_b]$,
there is a constant $C=C(d,p,q,p_b,q_b)>0$ such that for each solution $u\in \mW^{1,2}_{q,p}(Q_R(t,x))$ of equation \eqref{pde31} in the sense of Definition~\ref{Def31},
\begin{align}\label{essob}
\|\nabla^2_x u\|_{q,p;{R/2}}\lesssim_{C}
\left[\frac{\Lambda_1}{R\lambda}+
\Bigg(\frac{\|b\|_{q_b,p_b;R}}{\lambda/\Lambda_1^{1/q_b}}\Bigg)^{1/\theta_b}\right]^{2}\|u\|_{q,p;R}+\frac{\|f\|_{q,p;{R}}}{\lambda},	
\end{align}
where $\Lambda_1:=\Lambda+1$. Moreover, if $\theta:=1-\tfrac d{p}-\tfrac 2{q}>0$, then
\begin{align}\label{essob9}
\|\nabla_x u\|_{\infty;R/2}
\lesssim_C
 \Lambda_1^{1/q}\left[\frac{\Lambda_1}{R\lambda}+
\Bigg(\frac{\|b\|_{q_b,p_b;R}}{\lambda/\Lambda_1^{1/q_b}}\Bigg)^{1/\theta_b}\right]^{2-\theta}\|u\|_{q,p;R}
+\frac{R^{\theta}\|f\|_{q,p;R}}{\lambda/\Lambda_1^{1/q}}.
\end{align}
\end{theorem}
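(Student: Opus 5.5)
The plan is the classical localization--freezing--absorption scheme, with all constants tracked through scaling. Fix $(t,x)$ and write $Q_\tau:=Q_\tau(t,x)$ for $\tau\in[R/2,R]$.

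\emph{Step 1: cutoff and freezing.} For $R/2\leq \tau<\tau'\leq R$, take a space-time cutoff $\eta$ with $\eta\equiv1$ on $Q_\tau$, $\operatorname{supp}\eta\subset Q_{\tau'}$, $|\nabla\eta|\lesssim(\tau'-\tau)^{-1}$, $|\nabla^2\eta|+|\partial_t\eta|\lesssim(\tau'-\tau)^{-2}$. Then $w:=u\eta$ solves on $\mathbb{R}\times\mathbb{R}^d$
\begin{align*}
\partial_t w+\tr\big(a(s,x)\nabla^2 w\big)+g=0,
\end{align*}
with $g:=\eta f+\eta\,\tr((a(s,\cdot)-a(s,x))\nabla^2u)+\eta\, b\cdot\nabla u-u(\partial_t\eta+\tr(a\nabla^2\eta))-2\langle a\nabla u,\nabla\eta\rangle$. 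The frozen coefficient $a(s,x)$ depends only on $s$ and satisfies the ellipticity bounds with $\lambda,\Lambda$. Lemma~\ref{lemma31}(i) then gives $\|\nabla^2 w\|_{q,p}\lesssim\lambda^{-1}\|g\|_{q,p}$. By \eqref{As1}, the freezing term contributes at most $C\delta\|\nabla^2u\|_{q,p;\tau'}$. The cutoff terms contribute $\lambda^{-1}\Lambda_1[(\tau'-\tau)^{-2}\|u\|_{q,p;\tau'}+(\tau'-\tau)^{-1}\|\nabla u\|_{q,p;\tau'}]$. Interpolating $\|\nabla u\|$ via the Remark after Lemma~\ref{inter1}, applied to a slightly larger cutoff of $u$, reduces this to $\varepsilon\|\nabla^2u\|_{q,p;\tau'}+C(\Lambda_1/\lambda)^2(\tau'-\tau)^{-2}\|u\|_{q,p;\tau'}$. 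The factor $(\Lambda_1/(R\lambda))^2$ arises here.

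\emph{Step 2: the drift term.} This is the main obstacle. By H\"older in space and time, with $1/p=1/p_b+1/p_0$ and $1/q=1/q_b+1/q_0$,
\[
\|\eta b\cdot\nabla u\|_{q,p}\leq \|b\|_{q_b,p_b;R}\,\|\nabla(u\eta')\|_{q_0,p_0}.
\]
Apply Lemma~\ref{inter1} to $u\eta'$, where $\eta'$ is an intermediate cutoff; its $\theta$ equals $\theta_b$ since $d/p_0+2/q_0=d/p+2/q-d/p_b-2/q_b$. The $\partial_t(u\eta')$ term is controlled through the equation itself, $|\partial_t u|\leq\Lambda|\nabla^2u|+|b||\nabla u|+|f|$; this is the source of the $\Lambda_1$ weights and of the power $\Lambda_1^{1/q_b}$. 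Choose $\varepsilon_1\sim\lambda/(\|b\|\Lambda_1)$ small and $\varepsilon_2$ correspondingly, so that the $\nabla^2$ and $\partial_t$ contributions are absorbed with factor $\tfrac14$. The remainder is then
\[
\Big(\|b\|_{q_b,p_b;R}\Lambda_1^{1/q_b}/\lambda\Big)^{2/\theta_b}\|u\|_{q,p;\tau'}
\]
plus cutoff terms in $(\tau'-\tau)^{-2}$. The exponent bookkeeping $(d/p_0-1-d/p)/\theta_b=-1/\theta_b-\dots$ must produce exactly the power $2/\theta_b$. If it does not close directly, I would first rescale to $R=1$ and check the homogeneity, since both brackets in \eqref{essob} scale like $R^{-1}$.

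\emph{Step 3: iteration and the gradient bound.} With $\delta$ small, Steps 1--2 give
\[
h(\tau)\leq\tfrac12 h(\tau')+C(\tau'-\tau)^{-2}(\Lambda_1/\lambda)^2\|u\|_{q,p;R}+CM^2\|u\|_{q,p;R}+C\lambda^{-1}\|f\|_{q,p;R},
\]
where $h(\tau)=\|\nabla^2u\|_{q,p;\tau}$ and $M=(\|b\|\Lambda_1^{1/q_b}/\lambda)^{1/\theta_b}$. Lemma~\ref{Le26}, after normalizing $\tau\in[1/2,1]$ by scaling, yields \eqref{essob}. For \eqref{essob9}, use the embedding \eqref{eq:B} with $p_0=q_0=\infty$ on $u\tilde\eta$, which has $\theta$ given there. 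Bound $\partial_t$ by the equation, giving the factor $\Lambda_1$ whose balanced choice of $\varepsilon_2$ yields $\Lambda_1^{1/q}$. Then choose $\varepsilon_1=K^{-\theta}$, where $K$ is the bracket in \eqref{essob}. Inserting \eqref{essob} gives $K^{2-\theta}\|u\|$, and the $f$ term becomes $R^\theta\Lambda_1^{1/q}\lambda^{-1}\|f\|$, since $K\geq \Lambda_1/(R\lambda)$.
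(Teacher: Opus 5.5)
Your architecture matches the paper's: freeze $a$ at $(s,x)$, localize, apply Lemma~\ref{lemma31}(i), and treat $b\cdot\nabla_x u$ by H\"older plus Lemma~\ref{inter1} with $\partial_t$ replaced via the equation. Then iterate with Lemma~\ref{Le26}, use Lemma~\ref{inter1} with $(\infty,\infty)$, and rescale. However, the iteration does not close as you set it up.

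When you bound $\|\partial_t(u\eta')\|_{q,p}$ through the equation, the drift comes back. You get $\varepsilon_2\|b\|_{q_b,p_b;R}\|\nabla_x u\|_{q_0,p_0;\tau'}$ on the support of $\eta'$, which is a cylinder strictly larger than the one on which you are estimating $\|\nabla_x u\|_{q_0,p_0}$. After multiplying by $\lambda^{-1}\|b\|_{q_b,p_b;R}$ (the drift part of $g$), your bound for $\|\nabla^2_x u\|_{q,p;\tau}$ contains a multiple of $\lambda^{-1}\|b\|_{q_b,p_b;R}\|\nabla_x u\|_{q_0,p_0;\tau'}$. This term is controlled neither by $\|\nabla^2_xu\|_{q,p;\tau'}$ nor by $\|u\|_{q,p;R}$. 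Re-interpolating it only regenerates the same term on a still larger cylinder. So $h(\tau)\le\frac12h(\tau')+\cdots$ with $h=\|\nabla^2_xu\|_{q,p;\tau}$ does not follow.

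The missing device, which is what the paper uses, is to iterate on the combined quantity
\[
h(\tau):=\|\nabla^2_xu\|_{q,p;\tau}+\lambda^{-1}\|b\|_{q_b,p_b;R}\,\|\nabla_xu\|_{q_0,p_0;\tau}.
\]
Take $\varepsilon_1=\varepsilon\lambda/\|b\|_{q_b,p_b;R}$ and $\varepsilon_2=\varepsilon_1/\Lambda_1$ (at $R=1$). Then every reappearing term carries a factor $\varepsilon$ in front of a component of $h(\tau')$. The remainder is $\varepsilon_1^{(-1-d/p_b)/\theta_b}\varepsilon_2^{-2/(q_b\theta_b)}=\varepsilon_1^{1-2/\theta_b}\Lambda_1^{2/(q_b\theta_b)}$, using $1+d/p_b+2/q_b=2-\theta_b$. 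After multiplication by $\lambda^{-1}\|b\|_{q_b,p_b;R}$ this is exactly $\varepsilon^{1-2/\theta_b}M^2$, so the exponent bookkeeping you were unsure about closes directly.

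The enlarged $h$ is also needed in your gradient step. There you again substitute the equation for $\partial_t(u\tilde\eta)$, which produces $\|b\|_{q_b,p_b;R}\|\nabla_xu\|_{q_0,p_0}$, with $(q_0,p_0)$ the H\"older exponents from Step~2. \eqref{essob} alone does not bound this term; you need the bound on the full $h$ at an intermediate radius.

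Two of your parameter choices also give the wrong powers of $\Lambda_1$, which is precisely the content of the statement.
\begin{itemize}
\item Your choice $\varepsilon_1\sim\lambda/(\|b\|\Lambda_1)$ is too small by a factor $\Lambda_1$. With it, any $\varepsilon_2\lesssim\lambda/(\|b\|\Lambda_1)$ (as absorbing the $\partial_t$ term requires) leaves at least $\Lambda_1^{(2-\theta_b)/\theta_b}$ instead of $\Lambda_1^{2/(q_b\theta_b)}$.
\item In the gradient step, $\varepsilon_1=K^{-\theta}$ cannot yield $\Lambda_1^{1/q}$ for any $\varepsilon_2$; for instance, $\varepsilon_2=\varepsilon_1/\Lambda_1$ gives $\Lambda_1^{2/(q\theta)}K^{2-\theta}$. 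The balanced choice at $R=1$ is $\varepsilon_1=\Lambda_1^{1/q}K^{-\theta}$ with $\varepsilon_2=\varepsilon_1/\Lambda_1$. This gives $\Lambda_1^{1/q}K^{2-\theta}\|u\|_{q,p;1}+\Lambda_1^{1/q}K^{-\theta}\lambda^{-1}\|f\|_{q,p;1}$, and then $K\ge\Lambda_1/\lambda\ge1$ together with scaling yields \eqref{essob9}.
\end{itemize}
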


\begin{proof}
We employ the freezing coefficients technique and divide the proof into five steps.

\textbf{Step 1. Freezing at $(t, x)=(0,0)$ and scaling to $R=1$.}
Rewrite \eqref{pde31} as
\begin{align}\label{PDE22}
\partial_tu+\tr\big(a(t,0)\cdot\nabla_{x}^2 u\big)+g=0,
\end{align}
with
\begin{align}\label{Ds01}
g=\tr\big(\big(a(t,\cdot)-a(t,0)\big)\cdot\nabla_{x}^2 u\big)+b\cdot\nabla_xu+f.
\end{align}
Fix $\frac{1}{4}\leq\sigma<\tau\leq1$.
Let $\eta_0(t,x)$ and $\eta_1(t,x)$ be smooth cutoff functions supported in $Q_{(\tau+\sigma)/2}$ and $Q_{\tau}$, respectively, satisfying
\[
0\leq \eta_0 \leq 1\ \text{in } Q_{(\tau+\sigma)/2},\quad \eta_0\equiv 1\ \text{in } Q_{\sigma},
\]
\[
0\leq \eta_1 \leq 1\ \text{in } Q_{\tau},\quad \eta_1\equiv 1\ \text{in } Q_{(\tau+\sigma)/2},
\]
and, for some $C=C(d)>0$,
\begin{align}\label{DS01}
|\nabla_{x} \eta_i|\leq \frac{C}{\tau-\sigma},\quad |\nabla_{x}^2 \eta_i|+ |\partial_t\eta_i|\leq \frac{C}{(\tau-\sigma)^2},\quad i=0,1.
\end{align}
Define $\widetilde u:=u\eta_0$. Multiplying \eqref{PDE22} by $\eta_0$ gives
\[
\partial_t \widetilde u+\tr\big(a(t,0)\cdot\nabla_{x}^2 \widetilde u\big)+\widetilde g=0,
\]
where
\begin{align}\label{Be1}
\widetilde g=g\eta_0-\big(\partial_t\eta_0+\tr\big(a(t,0)\cdot\nabla_{x}^2 \eta_0\big)\big) u-2a(t,0) \nabla\eta_0\cdot\nabla_{x} u.
\end{align}
Set $\upsilon:=(\tau+\sigma)/2$ and $\Lambda_1:=\Lambda+1$.
Since $\eta_0\equiv1$ on $Q_\sigma$ and ${\rm supp}(\eta_0)\subset Q_\upsilon$, we have
\begin{align*}
\|\nabla_{x}^2 u\|_{q,p;\sigma }
&=\|\nabla_{x}^2 \widetilde u\|_{q,p;\sigma }\leq
\|\nabla_{x}^2 \widetilde u\|_{q,p}
\stackrel{\eqref{aaa}}{\lesssim} \lambda^{-1}\|\widetilde g\|_{q,p }
=\lambda^{-1}\|\widetilde g\|_{q,p;\upsilon}\\
&\!\!\!\stackrel{\eqref{DS01}}{\lesssim}\lambda^{-1}\Big(
\|g\|_{q,p;\upsilon }+\Lambda_1(\tau-\sigma)^{-2}\|u\|_{q,p;\upsilon }+\Lambda_1(\tau-\sigma)^{-1} \|\nabla_{x} u\|_{q,p;\upsilon }\Big).
\end{align*}
By the Gagliardo--Nirenberg inequality and Young's inequality, we have for any $\eps_0>0$,
\begin{align}\label{GN1}
\|\nabla_{x} u\|_{q,p;\upsilon }
\leq\eps_0\|\nabla^2_{x} u\|_{q,p;\upsilon}+C\eps_0^{-1}\|u\|_{q,p;\upsilon}.
\end{align}
 Moreover, letting $\frac{1}{q_1}+\frac1{q_b}=\frac1q$ and $\frac{1}{p_1}+\frac1{p_b}=\frac1p$,
by \eqref{Ds01}, \eqref{As1} and H\"older's inequality, we have
\begin{align*}
\|g\|_{q,p;\upsilon}
&\leq\delta\lambda\|\nabla_{x}^2u\|_{q,p;\upsilon}+\|b\cdot\nabla_{x} u\|_{q,p;\upsilon}+\|f\|_{q,p;\upsilon }\no\\
&\leq\delta\lambda\|\nabla_{x}^2u\|_{q,p;\upsilon}+\|b\|_{q_b,p_b;\upsilon}\|\nabla_{x} u\|_{q_1,p_1;\upsilon}+\|f\|_{q,p;\upsilon }.
\end{align*}
Combining the above estimates and using $\lambda/\Lambda_1\leq 1$, we find a constant $C=C(d,p,q)>0$ such that
\begin{align}
\begin{split}\label{DS22}
\|\nabla_{x}^2 u\|_{q,p;\sigma }
&\lesssim_C
(\delta+\eps_0)\|\nabla^2_{x} u\|_{q,p;\upsilon }+\eps_0^{-1}(\Lambda_1/\lambda)^2(\tau-\sigma)^{-2}\|u\|_{q,p;\upsilon }\\
&\quad+\lambda^{-1}\|b\|_{q_b,p_b;1}{ \|\nabla_{x} u\|_{q_1,p_1;\upsilon}} +\lambda^{-1}\|f\|_{q,p;\upsilon }.
\end{split}
\end{align}

\textbf{Step 2. Estimates of $\|\nabla_{x} u\|_{q_1,p_1;\upsilon}$.}
Since $\eta_1\equiv1$ on $Q_\upsilon$ and ${\rm supp}(\eta_1)\subset Q_\tau$,
\[
\|\nabla_x u\|_{q_1,p_1;\upsilon}\leq \|\nabla_x (u\eta_1)\|_{q_1,p_1}.
\]
Recall that $\theta_b:=1-\frac {d}{p_b}-\frac {2}{q_b}$.
By Lemma~\ref{inter1} with $(q_0,p_0)=(q_1,p_1)$,
there exists a constant $C=C(d,p,q,p_b,q_b)>0$ such that for any $\eps_1, \eps_2>0$,
\begin{align*}
\|\nabla_x (u\eta_1)\|_{q_1,p_1}
&\lesssim_{C} \eps_{1}\|\nabla^2_x (u\eta_1)\|_{q,p}+\eps_2 \|\partial_t(u\eta_1)\|_{q,p}
+ \eps_1^{(-1-d/p_b)/\theta_{b}}\eps_2^{-2/(q_b\theta_b)}\|{u\eta_1}\|_{q,p}.
\end{align*}
Note that if $q_b=\infty$, one can take $\eps_2\equiv 0$.
Using \eqref{DS01}, \eqref{GN1} and ${\rm supp}(\eta_1)\subset Q_\tau$, we have
\begin{align*}
\|\nabla^2_x (u\eta_1)\|_{q,p}
&\leq\|\nabla^2_xu\|_{q,p;\tau}+2\|\nabla_x\eta_1\|_\infty\|\nabla_x u\|_{q,p;\tau}+\|u\|_{q,p;\tau}\\
&\leq 2\|\nabla^2_xu\|_{q,p;\tau}+C(\tau-\sigma)^{-2}\|u\|_{q,p;\tau},
\end{align*}
and by equation \eqref{pde31},
\begin{align*}
\|\partial_t (u \eta_1)\|_{q,p}&\leq\Lambda_1\|\nabla^2_x u\|_{q,p;\tau}+\|b\|_{q_b,p_b;\tau}\|\nabla_{x} u\|_{q_1,p_1;\tau}\no\\
&\quad+C(\tau-\sigma)^{-2}\|u\|_{q,p;\tau}+\|f\|_{q,p;\tau}.
\end{align*}
Hence, for some $C_1=C_1(d,p,q,p_b,q_b)>0$ and all $\eps_1,\eps_2>0$,
\begin{align*}
\|\nabla_x u\|_{q_1,p_1;\upsilon}
&\lesssim_{C_1} \big[\eps_{1}+\eps_2\Lambda_1\big]\|\nabla^2_x {u}\|_{q,p;{\tau}}
+\eps_{2}(\|b\|_{q_b,p_b;1}\|\nabla_{x} u\|_{q_1,p_1;\tau}+\|f\|_{q,p;\tau})\no\\
&\quad+ \big(\eps_1^{(-1-d/p_b)/\theta_{b}}\eps_2^{-2/(q_b\theta_b)}+(\eps_1+\eps_2)(\tau-\sigma)^{-2}\big)\|{u}\|_{q,p;{\tau}}.
\end{align*}
In particular, if we choose $\eps_2=\eps_1/\Lambda_1\leq(\eps_1/\lambda)\wedge\eps_1$, then by $1+\frac{d}{p_b}+\frac{2}{q_b}=2-\theta_b $,
\begin{align}\label{case11}
\|\nabla_x u\|_{q_1,p_1;\upsilon}
&\lesssim_{C_1} \eps_{1}\Big[\|\nabla^2_x {u}\|_{q,p;{\tau}}
+(\|b\|_{q_b,p_b;1}\|\nabla_{x} u\|_{q_1,p_1;\tau}+\|f\|_{q,p;\tau})/\lambda\no\\
&\qquad\qquad+ \big(\eps_1^{-2/\theta_{b}}\Lambda_1^{2/(q_b\theta_b)}+(\tau-\sigma)^{-2}\big)\|{u}\|_{q,p;{\tau}}\Big].
\end{align}

\textbf{Step 3. Estimate of $\|\nabla_{x}^2 u\|_{q,p;3/4}$.}
Assume first $\|b\|_{q_b,p_b;1}\neq0$. For any $\eps>0$, if we choose $\eps_1=\frac{\eps\lambda}{\|b\|_{q_b,p_b;1}}$ in \eqref{case11},
then
\begin{align}\label{k3}
\begin{split}
\lambda^{-1}\|b\|_{q_b,p_b;1}\|\nabla_x u\|_{q_1,p_1;\upsilon}
&\lesssim_{C_1}
\eps\Bigg[\|\nabla^2_x {u}\|_{q,p;{\tau}}
+\frac{\|b\|_{q_b,p_b;1} \|\nabla_x u\|_{q_1,p_1;\tau}+\|f\|_{q,p;\tau}}{\lambda}\Bigg]\\
&\qquad+\eps\Bigg[\Bigg(\frac{\|b\|_{q_{b},p_b;1}}
{\eps\lambda/\Lambda_1^{1/q_b}}\Bigg)^{2/\theta_b}+\frac{1}{(\tau-\sigma)^{2}}\Bigg]\|{u}\|_{q,p;{\tau}}.
\end{split}
\end{align}
Inserting this into \eqref{DS22} yields for some $C_2=C_2(d,p,q)>0$ and any $\eps_0,\eps>0$,
\begin{align}
\label{k31}
\begin{split}
\|\nabla_{x}^2 u\|_{q,p;\sigma }
&\lesssim_{C_2}
\big(\delta+\eps_0+\eps\big)\|\nabla_{x}^2u\|_{q,p;\tau }
+\frac{\eps\|b\|_{q_{b},p_b;1} \|\nabla_x u\|_{q_1,p_1;\tau}}{\lambda}
+\frac{\|f\|_{q,p;\tau}}\lambda\\
&\qquad+\Bigg[\eps\Bigg(\frac{\|b\|_{q_{b},p_b;1}}{\eps\lambda/\Lambda_1^{1/q_{b}}}\Bigg)^{2/\theta_b}+
\frac{\eps+(\Lambda_1/\lambda)^2/\eps_0}{(\tau-\sigma)^2}\Bigg]\|{u}\|_{q,p;{\tau}}.
\end{split}
\end{align}
Define
\[
h(\sigma):=\|\nabla_{x}^2 u\|_{q,p;\sigma }+\lambda^{-1}\|b\|_{q_{b},p_b;1}\|\nabla_x u\|_{q_1,p_1;\sigma}.
\]
Then \eqref{k3} and \eqref{k31} imply the existence of $C_3=C_3(d,p,q)>0$ such that for all $\frac12\leq\sigma<\tau\leq 1$ and $\delta,\eps_0,\eps\in(0,1)$,
\begin{align*}
h(\sigma)
&\lesssim_{C_3}
\big(\delta+\eps_0+\eps\big)h(\tau)+\frac{\|f\|_{q,p;\tau}}\lambda
+\Bigg[\Bigg(\frac{\|b\|_{q_{b},p_b;1}}{\eps\lambda/\Lambda_1^{1/q_{b}}}\Bigg)^{2/\theta_b}+
\frac{(\Lambda_1/\lambda)^2}{\eps_0(\tau-\sigma)^2}\Bigg]\|{u}\|_{q,p;{\tau}}.
\end{align*}
Now, choosing $\delta=\eps_0=\eps=\frac1{8C_3}$, we obtain for some $C_4=C_4(d,p,q)>0$ and all $\tfrac12\leq\sigma<\tau\leq 1$,
$$
h(\sigma)\leq \frac12h(\tau)+\frac{C_3\|f\|_{q,p;1}}{\lambda}+\frac{C_4\sA\|u\|_{q,p;1}}{(\tau-\sigma)^2},
$$
where
\[
\sA:=\Bigg(\frac{\|b\|_{q_{b},p_b;1}}{\lambda/\Lambda_1^{1/q_b}}\Bigg)^{2/\theta_b}+\frac{\Lambda_1^2}{\lambda^2}.
\]
Lemma~\ref{Le26} then gives
\begin{align}\label{As22}
\|\nabla_{x}^2 u\|_{q,p;3/4}+  \frac{ \|b\|_{q_{b},p_b;1}\|\nabla_x u\|_{q_1,p_1;3/4}}\lambda
= h\left(\tfrac34\right)\lesssim\sA\|u\|_{q,p;1}+\frac{\|f\|_{q,p;1}}{\lambda}.
\end{align}
If $\|b\|_{q_b,p_b;1}=0$, a similar argument yields
\begin{align*}
\|\nabla_{x}^2 u\|_{q,p;3/4}
\lesssim_{C_1}
\frac{\Lambda^2_1}{\lambda^2}\|{u}\|_{q,p;1}+
\frac{\|f\|_{q,p;1}}{\lambda}.
\end{align*}

\textbf{Step 4. Estimate of $\|\nabla_x u\|_{\infty;1/2}$.}
Suppose that $\theta=1-\tfrac d{p}-\tfrac 2{q}>0.$
As in step 2 of proving \eqref{case11},
if we use Lemma~\ref{inter1} with $(q_0,p_0)=(\infty,\infty)$, then
there is a constant $C_2=C_2(d,p,q,q_b,p_b)>0$ such that for all $\eps_1>0$,
\begin{align}\label{case1}
\begin{split}
\|\nabla_x u\|_{\infty;1/2}
&\lesssim_{C_2} \eps_1\Big[\|\nabla^2_x {u}\|_{q,p;{3/4}}
+\lambda^{-1}\|b\|_{q_b,p_b;1}\|\nabla_{x} u\|_{q_1,p_1;3/4}\\
&\qquad+\|f\|_{q,p;3/4}/\lambda+ \big(\eps_1^{-2/\theta}\Lambda_1^{2/(q\theta)}+1\big)\|{u}\|_{q,p;3/4}\Big],
\end{split}
\end{align}
where $\theta:=1-\frac{d}{p}-\frac{2}{q}$.
Inserting \eqref{As22} into \eqref{case1}, we get
\begin{align*}
\|\nabla_x u\|_{\infty;1/2}
&\lesssim \eps_1\left(\sA\|u\|_{q,p;1}+\lambda^{-1} {\|f\|_{q,p;1}}+\eps_1^{-2/\theta}\Lambda_1^{2/(q\theta)}\|{u}\|_{q,p;1}\right).
\end{align*}
Finally, taking $\eps_1=\Lambda_{1}^{1/q}\sA^{-\theta/2}$ and using $\sA\geq\Lambda_1^2/\lambda^2\geq 1$ yield
\begin{align}\label{As221}
\|\nabla_x u\|_{\infty;1/2}
&\lesssim
\Lambda_1^{1/q}\sA^{1-\theta/2}\|{u}\|_{q,p;1}+
\Lambda_1^{1/q}\|f\|_{q,p;1}/(\lambda\sA^{\theta/2})\no\\
&\leq\Lambda_1^{1/q}\left[\frac{\Lambda_1}{\lambda}+
\Bigg(\frac{\|b\|_{q_b,p_b;1}}{\lambda/\Lambda_1^{1/q_b}}\Bigg)^{1/\theta_b}\right]^{2-\theta}\|u\|_{q,p;1}
+\frac{\|f\|_{q,p;1}}{\lambda/\Lambda_1^{1/q}}.
\end{align}

\textbf{Step 5. Parabolic scaling.} For general $t,x$ and $R>0$, define
\begin{align}\label{tr1}
b_R(s,y)&:=R\, b(R^2s+t, R y+x),\quad
a_R(s,y):=a(R^2s+t, R y+x),\\[2mm]
\label{tr2}
u_R(s,y)&:=u(R^2s+t, R y+x),\quad
f_R(s,y):=R^2 f(R^2s+t, R y+x).
\end{align}
If $u$ satisfies \eqref{pde31} in $Q_{R}(t,x)$, the chain rule shows that $u_R$ satisfies \eqref{pde31} in $Q_1:=Q_{1}(0,0)$ with coefficients $b_{R}, a_{R}$ and $f_{R}$, and condition \eqref{As1} becomes
\[
\sup_{(s,y)\in Q_{1}}|a_{R}(s,y)-a_{R}(s,0)|\leq \delta \lambda.
\]
Applying the change of variables and the a priori estimate \eqref{As22}, we get
\begin{align*}
\|\nabla^2_x u\|_{q,p;R/2}
&=R^{d/p+2/q-2}\|\nabla^2_x u_R\|_{q,p;1/2}
=\frac{\|\nabla^2_x u_R\|_{q,p;1/2}}{R^{1+\theta}}\\
&\lesssim \frac{1}{R^{1+\theta}}\Bigg(\Bigg[\frac{\Lambda_1^2}{\lambda^2}+
\Bigg(\frac{\|b_R\|_{q_b,p_b;1}}{\lambda/\Lambda_1^{1/q_b}}\Bigg)^{2/\theta_b}\Bigg]\|u_R\|_{q,p;1}
+\frac{\|f_R\|_{q,p;1}}\lambda\Bigg)\\
&= \Bigg[\frac{\Lambda_1^2}{R^2\lambda^2}+\Bigg(\frac{\|b\|_{q_b,p_b;R}}{\lambda/\Lambda_1^{1/q_b}}\Bigg)^{2/\theta_b}\Bigg]\|{u}\|_{q,p;R} +
\frac{\|{f}\|_{q,p;R}}{\lambda},
\end{align*}
where we have used
\[
\|u_R\|_{q,p;1}=R^{\theta-1}\|u\|_{q,p;R},\quad
\|b_R\|_{q_b,p_b;1}=R^{\theta_b}\|b\|_{q_b,p_b;R},\quad
\|{f}_R\|_{q,p;1}=R^{1+\theta}\|{f}\|_{q,p;R}.
\]
Moreover, if $\theta=1-\tfrac d{p}-\tfrac 2{q}>0,$ then from \eqref{As221} we deduce
\begin{align*}
\|\nabla_x u\|_{\infty;R/2}
&\lesssim
\frac{\Lambda_1^{1/q}}R\Bigg[\frac{\Lambda_1}{\lambda}+\Bigg(\frac{\|b_R\|_{q_b,p_b;1}}{\lambda/\Lambda_1^{1/q_b}}\Bigg)^{1/\theta_b}\Bigg]^{2-\theta}\|u_R\|_{q,p;1}
+\frac{\|f_R\|_{q,p;1}}{R\lambda/\Lambda_1^{1/q}}\\
&=\Lambda_1^{1/q}\Bigg[\frac{\Lambda_1}{R\lambda}+\Bigg(\frac{\|b\|_{q_b,p_b;R}}{\lambda/\Lambda_1^{1/q_b}}\Bigg)^{1/\theta_b}\Bigg]^{2-\theta}\|u\|_{q,p;R}
+\frac{R^\theta\|f\|_{q,p;R}}{\lambda/\Lambda_1^{1/q}}.
\end{align*}
The proof is complete.
\end{proof}

Fix a point $(t,x)\in \mathbb{R}\times \mathbb{R}^{d}$ and recall the parabolic cylinder $Q_R(t,x)$ with centre $(t,x)$ and radius $R>0$ defined in \eqref{QR1}.
For $\alpha \in (0,1)$ we set
\begin{align}\label{pR2}
[f]_{\infty,\alpha;R}:=\sup_{s\in I_{R}(t)}[f(s,\cdot)]_{\mathbf{C}^{\alpha}(B_R(x))}
\end{align}
and
\begin{align}\label{pR02}
 \|f\|_{\infty;R}:=\sup_{s\in I_{R}(t)}\|f(s,\cdot)\|_{L^\infty(B_R(x))}.
\end{align}
Next we establish an {a priori} local Schauder estimate for strong solutions to \eqref{pde31}.

\begin{theorem}[Local Schauder estimate]\label{Schau}
Let $\alpha\in(0,1)$, $R>0$ and $(t,x)\in \mathbb{R} \times \mathbb{R}^{d}$. Assume that
$b\in L^\infty\bC^\alpha(Q_R(t,x))$ and
there exist constants $0<\lambda\leq \Lambda<\infty$ such that
\[
\lambda|\xi|^2\leq \langle a(s,x)\xi, \xi\rangle\leq\Lambda|\xi|^2,\ s\in I_R(t),\ \xi\in\mathbb{R}^{d},
\]
and that for a sufficiently small $\delta=\delta(d,\alpha)\in(0,1)$,
\begin{align}\label{ABs1}
R^{\alpha}[a]_{\infty,\alpha;R}\leq \delta \lambda.
\end{align}
Then there exists a constant $C=C(d,\alpha)>0$ such that for every
$u\in L^\infty_{\rm loc}\big(I_R(t);\mathbf{C}_{\rm loc}^{2+\alpha}(B_R(x))\big)$ solving \eqref{pde31} in $Q_R(t,x)$,
\begin{align*}
[\nabla_{x}^{2}u]_{\infty,\alpha;{R/2}}
\lesssim_C
\Bigg(\frac{\Lambda_1/R+\|b\|_{\infty;R}+R^{\alpha}[b]_{\infty,\alpha;R}}{\lambda}\Bigg)^{2+\alpha}\|u\|_{\infty;R}
+\frac{\|f\|_{\infty;R}+R^{\alpha}[f]_{\infty,\alpha;R}}{R^\alpha\lambda},
\end{align*}
where $\Lambda_1:=\Lambda+1$ and $[\cdot]_{\infty,\alpha;R},\|\cdot\|_{\infty;R}$ are defined in \eqref{pR2} and \eqref{pR02}.
\end{theorem}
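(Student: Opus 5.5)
The plan mirrors the proof of Theorem~\ref{sobov}: freeze the coefficients, cut off, apply the global Schauder estimate of Lemma~\ref{lemma31}(ii), absorb lower-order terms by interpolation (Lemma~\ref{inter11}), and close with the iteration Lemma~\ref{Le26}.

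First reduce to $(t,x)=(0,0)$ and $R=1$ by the parabolic scaling \eqref{tr1}--\eqref{tr2}. Under this scaling $[a_R]_{\infty,\alpha;1}=R^\alpha[a]_{\infty,\alpha;R}$, $\|b_R\|_{\infty;1}=R\|b\|_{\infty;R}$, $[b_R]_{\infty,\alpha;1}=R^{1+\alpha}[b]_{\infty,\alpha;R}$, $[f_R]_{\infty,\alpha;1}=R^{2+\alpha}[f]_{\infty,\alpha;R}$ and $[\nabla^2u_R]_{\infty,\alpha;1/2}=R^{2+\alpha}[\nabla^2 u]_{\infty,\alpha;R/2}$. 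Hence the claimed estimate is exactly the $R=1$ statement
\[
[\nabla^2u]_{\infty,\alpha;1/2}\lesssim \mathscr K^{2+\alpha}\|u\|_{\infty;1}+\lambda^{-1}\|f\|_{\alpha;1},
\qquad \mathscr K:=\frac{\Lambda_1+\|b\|_{\infty;1}+[b]_{\infty,\alpha;1}}{\lambda},
\]
where $\|f\|_{\alpha;1}:=\|f\|_{\infty;1}+[f]_{\infty,\alpha;1}$. Note $\mathscr K\geq 1$.

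For $\frac12\leq\sigma<\tau\leq1$ and $\upsilon=(\sigma+\tau)/2$, take the cutoff $\eta_0$ as in Step 1 of Theorem~\ref{sobov}. Rewrite the equation as \eqref{PDE22} with $g$ given by \eqref{Ds01}, and set $\widetilde u=u\eta_0$, which satisfies the frozen equation with right-hand side $\widetilde g$ from \eqref{Be1}. By Lemma~\ref{lemma31}(ii), $[\nabla^2u]_{\infty,\alpha;\sigma}\lesssim\lambda^{-1}[\widetilde g]_{\infty,\alpha}$. Estimate each piece of $\widetilde g$ by the product rule for H\"older seminorms.

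\textbf{Frozen diffusion term.} Using $\|a(\cdot)-a(0)\|_{L^\infty(B_\upsilon)}\leq[a]_{\alpha;1}\leq\delta\lambda$ from \eqref{ABs1}, the term $[(a-a(0))\nabla^2u]_\alpha$ is bounded by $\delta\lambda[\nabla^2u]_{\alpha;\upsilon}+\delta\lambda\|\nabla^2u\|_{\infty;\upsilon}$.

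\textbf{Drift term.} $[b\cdot\nabla u]_{\alpha}\lesssim\|b\|_\infty\|\nabla u\|_{\alpha;\upsilon}+[b]_\alpha\|\nabla u\|_{\infty;\upsilon}$.

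\textbf{Cutoff terms.} These give $\Lambda_1(\tau-\sigma)^{-2-\alpha}\|u\|_{\alpha;\upsilon}$ and $\Lambda_1(\tau-\sigma)^{-1-\alpha}\|\nabla u\|_{\alpha;\upsilon}$.

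\textbf{Forcing.} This gives $\|f\|_{\alpha;1}(\tau-\sigma)^{-\alpha}$.

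All lower-order quantities $\|\nabla^j u\|_{\bC^\beta(B_\upsilon)}$ with $j+\beta<2+\alpha$ are then bounded by Lemma~\ref{inter11}, applied in space at each time and rescaled to the balls $B_\upsilon\subset B_\tau$:
\[
\|\nabla^j u\|_{\beta;\upsilon}\lesssim \eps^{2-j+\alpha-\beta}[\nabla^2 u]_{\alpha;\tau}+\eps^{-j-\beta}\bigl(1+\eps^{2+\alpha}(\tau-\sigma)^{-\gamma}\bigr)\|u\|_{\infty;\tau}.
\]
The small parameter $\eps$ is chosen separately for each term so that its coefficient in front of $[\nabla^2u]_{\alpha;\tau}$ is a small constant $\kappa$.

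\textbf{Choice of $\eps$.} For the drift term $\lambda^{-1}\|b\|_\infty\|\nabla u\|_\alpha$, take $\eps^{1}\sim\kappa\lambda/\|b\|_\infty$, which produces $\kappa^{-c}(\|b\|_\infty/\lambda)^{2+\alpha}\|u\|_\infty$. For $\lambda^{-1}[b]_\alpha\|\nabla u\|_\infty$, take $\eps^{1+\alpha}\sim\kappa\lambda/[b]_\alpha$, which produces $([b]_\alpha/\lambda)^{(2+\alpha)/(1+\alpha)}\|u\|_\infty$. This power is at most $\mathscr K^{2+\alpha}$ because $\mathscr K\geq1$ and $(2+\alpha)/(1+\alpha)\leq 2+\alpha$. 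The cutoff terms are handled the same way with $\Lambda_1/\lambda$ and produce $(\Lambda_1/\lambda)^{2+\alpha}(\tau-\sigma)^{-\gamma'}\|u\|_\infty$. The frozen-diffusion term needs no choice: it gives $\delta[\nabla^2u]_{\alpha;\tau}$ plus lower order. The result is
\[
h(\sigma)\leq(C\delta+\tfrac14)\,h(\tau)+C(\tau-\sigma)^{-\gamma'}\mathscr K^{2+\alpha}\|u\|_{\infty;1}+C\lambda^{-1}\|f\|_{\alpha;1}(\tau-\sigma)^{-\alpha},
\qquad h(\sigma):=[\nabla^2u]_{\infty,\alpha;\sigma}.
\]
Choosing $\delta$ small and applying Lemma~\ref{Le26} yields the bound at $\sigma=\frac12$, and rescaling back gives the theorem.

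\textbf{Main obstacle.} The delicate part is the bookkeeping in the interpolation. The $\eps$-choices must bring every coefficient-dependent lower-order term to the single homogeneous power $\mathscr K^{2+\alpha}$, not merely to some polynomial in the coefficients. This works because the exponents produced by interpolation, namely $(2+\alpha)/(2+\alpha-j-\beta)$ times the order of the coefficient, add up consistently under parabolic scaling. Two further points need care. The sup norm of the time derivative never enters, since only spatial seminorms are estimated at each fixed time. One must also check that the bound $\|\nabla^2 u\|_\infty\leq$ (interpolation) is legitimate for $u\in\mathbf{C}^{2+\alpha}_{\rm loc}$, so that $h$ is bounded as Lemma~\ref{Le26} requires.
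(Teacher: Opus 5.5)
Your proposal is correct and follows the paper's proof essentially step for step. The shared steps are: rescale to $R=1$, freeze $a$ at the centre and cut off with $\eta_0$, apply the global Schauder bound of Lemma~\ref{lemma31}(ii) to $\widetilde u=u\eta_0$, estimate $\widetilde g$ by the product rule, interpolate the lower-order terms via Lemma~\ref{inter11} with $\eps$ tuned so that everything is dominated by $\mathscr K^{2+\alpha}$ (using $\mathscr K\geq1$), and close with Lemma~\ref{Le26}. Your caveat about boundedness of $h$ is well taken and is also left implicit in the paper; it is handled by running the iteration on $[\tfrac12,\tau_2]$ with $\tau_2<1$ and letting $\tau_2\uparrow1$.
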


\begin{proof}
\textbf{Step 1. Reduction to $(t,x)=(0,0)$ and $R=1$.}
By the parabolic scaling \eqref{tr1}--\eqref{tr2}, it suffices to prove the estimate for $(t,x)=(0,0)$ and $R=1$.
We fix $\frac12\leq\sigma<\tau\leq1$ and set $\upsilon:=\frac{\sigma+\tau}{2}$.
Let $\eta=\eta_0$ be the standard cutoff functions supported in $Q_\upsilon$, as in Theorem~\ref{sobov}.
Besides \eqref{DS01}, we have for some $C=C(d,\alpha)$,
\begin{align}\label{DS11-bis}
[\eta]_\alpha\leq C(\tau-\sigma)^{-\alpha},\
[\nabla\eta]_\alpha\leq C(\tau-\sigma)^{-1-\alpha},\
[\nabla^2\eta]_\alpha+[\partial_t\eta_0]_\alpha\leq C(\tau-\sigma)^{-2-\alpha}.
\end{align}
Define $\widetilde u:=u\eta$. Then $\widetilde u$ solves on $\mathbb{R}\times\mathbb{R}^d$
\[
\partial_t\widetilde u+\tr(a(t,0)\nabla_x^2\widetilde u)+\widetilde g=0,
\]
where $\widetilde g$ is given by \eqref{Be1} and supported in $Q_\upsilon$.
Applying Lemma~\ref{lemma31}(ii) (the global Schauder estimate \eqref{aa0}) yields
\[
[\nabla_x^2\widetilde u]_{\infty,\alpha}\lesssim \lambda^{-1}[\widetilde g]_{\infty,\alpha}.
\]
Since $\eta\equiv1$ on $Q_\sigma$, we have
\[
[\nabla_x^2 u]_{{\infty,\alpha;\sigma}}
=[\nabla_x^2\widetilde u]_{{\infty,\alpha;\sigma}}
\leq [\nabla_x^2\widetilde u]_{\infty,\alpha}
\lesssim \lambda^{-1}[\widetilde g]_{\infty,\alpha}
\leq \lambda^{-1}[\widetilde g]_{{\infty,\alpha;\upsilon}}.
\]
Thus it remains to estimate $[\widetilde g]_{\infty,\alpha;\upsilon}$.

\textbf{Step 2. Estimating the forcing term $\widetilde g$.}
Using the product estimate for time-uniform H\"older seminorms,
\[
[fg]_{\infty,\alpha;\upsilon}\leq [f]_{\infty,\alpha;\upsilon}\|g\|_{\infty;\upsilon}
+[g]_{\infty,\alpha;\upsilon}\|f\|_{\infty;\upsilon},
\]
together with \eqref{Be1} and \eqref{DS11-bis}, we obtain
\begin{align}\label{gtilde-est-1}
[\widetilde g]_{\infty,\alpha;\upsilon}
\lesssim\; &[g]_{\infty,\alpha;\upsilon}+(\tau-\sigma)^{-\alpha}\|g\|_{\infty;\upsilon}
+\Lambda_1(\tau-\sigma)^{-2}[u]_{\infty,\alpha;\upsilon}
+\Lambda_1(\tau-\sigma)^{-2-\alpha}\|u\|_{\infty;\upsilon}\no\\
&+\Lambda_1(\tau-\sigma)^{-1}[\nabla_x u]_{\infty,\alpha;\upsilon}
+\Lambda_1(\tau-\sigma)^{-1-\alpha}\|\nabla_x u\|_{\infty;\upsilon}.
\end{align}
Next, recall $g=\tr((a-a(\cdot,0))\nabla_x^2u)+b\cdot\nabla_xu+f$.
By \eqref{ABs1} with $R=1$ and $|a(t,x)-a(t,0)|\leq [a]_{\infty,\alpha;1}$ for $|x|\leq1$,
\begin{align*}
[g]_{\infty,\alpha;\upsilon}
\leq\; &\delta\lambda\big([\nabla_x^2u]_{\infty,\alpha;\upsilon}+\|\nabla_x^2u\|_{\infty;\upsilon}\big)
+\|b\|_{\infty;1}[\nabla_xu]_{\infty,\alpha;\upsilon}\\
&+[b]_{\infty,\alpha;1}\|\nabla_xu\|_{\infty;\upsilon}
+[f]_{\infty,\alpha;\upsilon},
\end{align*}
and
$$
\|g\|_{\infty;\upsilon}
\leq\; \delta\lambda\|\nabla_x^2u\|_{\infty;\upsilon}
+\|b\|_{\infty;1}\|\nabla_xu\|_{\infty;\upsilon}
+\|f\|_{\infty;\upsilon}.
$$
Substituting the above two estimates into \eqref{gtilde-est-1}, we get for all $\frac12\leq\sigma<\tau\leq1$,
\begin{align}\label{g-est-3}
\begin{split}
[\widetilde g]_{\infty,\alpha;\upsilon}
&\lesssim\delta\lambda [\nabla_x^2u]_{\infty,\alpha;\upsilon}
+\delta\lambda(\tau-\sigma)^{-\alpha}\|\nabla_x^2u\|_{\infty;\upsilon}\\
&+(\|b\|_{\infty;1}+\Lambda_1(\tau-\sigma)^{-1})[\nabla_xu]_{\infty,\alpha;\upsilon}\\
&+(\tau-\sigma)^{-1-\alpha}([b]_{\infty,\alpha;1}+\|b\|_{\infty;1}+\Lambda_1)\|\nabla_xu\|_{\infty;\upsilon}\\
&+\Lambda_1(\tau-\sigma)^{-2}[u]_{\infty,\alpha;\upsilon}
+\Lambda_1(\tau-\sigma)^{-2-\alpha}\|u\|_{\infty;\upsilon}\\
&+[f]_{\infty,\alpha;\upsilon}+(\tau-\sigma)^{-\alpha}\|f\|_{\infty;\upsilon}.
\end{split}
\end{align}

\textbf{Step 3. Interpolating lower-order terms by $[\nabla_x^2u]_{\infty,\alpha;\tau}$.}
We now invoke Lemma~\ref{inter11}. In particular, there exists a constant $\gamma_*=\gamma_*(d,\alpha)>0$
such that for any $j\in\{0,1,2\}$ and $\beta\in[0,\alpha]$,
\begin{equation}\label{interp-unif}
[\nabla^j_x u]_{\infty,\beta;\upsilon}
\lesssim
\eps^{2-j+\alpha-\beta}[\nabla_x^2u]_{\infty,\alpha;\tau}
+\eps^{-j-\beta}\Big(\eps^{2+\alpha}(\tau-\sigma)^{-{\gamma_*}}+1\Big)\|u\|_{\infty;\tau},
\end{equation}
for all $\eps\in(0,1)$.
Applying \eqref{interp-unif} to $(j,\beta)=(2,0),(1,\alpha),(1,0),(0,\alpha)$ with suitable choices of $\eps$,
and substituting into \eqref{g-est-3}, we arrive at an inequality of the form
\begin{align}\label{main-iter}
[\widetilde g]_{\infty,\alpha;\upsilon}
\leq C_0\Big\{
&(\delta+\eps)\lambda[\nabla_x^2u]_{\infty,\alpha;\tau}
+\Xi\,(\tau-\sigma)^{-\gamma_*}\|u\|_{\infty;\tau}\no\\
&+[f]_{\infty,\alpha;\tau}+(\tau-\sigma)^{-\alpha}\|f\|_{\infty;\tau}\Big\},
\end{align}
where $C_0=C_0(d,\alpha)$ and
\[
\Xi:=\big(\Lambda_1+\|b\|_{\infty;1}+[b]_{\infty,\alpha;1}\big)^{2+\alpha}/\lambda^{1+\alpha}.
\]

\textbf{Step 4. Iteration in $\sigma$.}
Combining \eqref{main-iter} with $[\nabla_x^2u]_{\infty,\alpha;\sigma}\lesssim \lambda^{-1}[\widetilde g]_{\infty,\alpha;\upsilon}$ yields
\[
[\nabla_x^2u]_{\infty,\alpha;\sigma}
\lesssim_{C_1}(\delta+\eps)[\nabla_x^2u]_{\infty,\alpha;\tau}
+\frac{\Xi\,\|u\|_{\infty;\tau}}{\lambda(\tau-\sigma)^{\gamma_*}}
+\frac{(\tau-\sigma)^{-\alpha}\|f\|_{\infty;\tau}+[f]_{\infty,\alpha;\tau}}{\lambda},
\]
for some $C_1=C_1(d,\alpha)$.
Choose $\delta$ and $\eps$ sufficiently small so that $C_1(\delta+\eps)\leq\frac12$.
Then Lemma~\ref{Le26} implies
\begin{align*}
[\nabla_{x}^2 u]_{\infty,\alpha;1/2}
\lesssim
\Bigl(\frac{\Lambda_1+\|b\|_{\infty;1}+[b]_{\infty,\alpha;1}}{\lambda}\Bigr)^{2+\alpha}\|u\|_{\infty;1}
+\frac{\|f\|_{\infty;1}+[f]_{\infty,\alpha;1}}{\lambda}.
\end{align*}

\textbf{Step 5. Scaling back to general $R>0$.}
Applying the parabolic scaling \eqref{tr1}--\eqref{tr2} and using the identities
\[
\|b_R\|_{\infty;1}=R\|b\|_{\infty;R},\qquad [b_R]_{\infty,\alpha;1}=R^{1+\alpha}[b]_{\infty,\alpha;R},
\]
\[
\|f_R\|_{\infty;1}=R^2\|f\|_{\infty;R},\qquad [f_R]_{\infty,\alpha;1}=R^{2+\alpha}[f]_{\infty,\alpha;R},
\]
we obtain exactly the desired estimate on $Q_{R/2}(t_0,x_0)$.
\end{proof}

\subsection{Proofs of Theorems \ref{grad1} and \ref{grad3}}

Throughout this subsection, fix $S<T$ and let $\alpha\in(0,1]$ be as in {\bf (H$^\alpha_a$)}.
For fixed $(t,x)\in[\frac{T+S}{2},T)\times{\mathbb R}^d$, define
\[
R_0:=\frac{\sqrt{T-t}}{\sqrt{T-S}\vee 1},\quad  \Lambda_1:=\Lambda(x)+1,\quad  \lambda:=\lambda(x),
\]
and for $R\in(0,R_0]$,
\[
I_R(t):=(t-R^2,t+R^2),
\]
Note that $0<R_0\leq\sqrt{T-t}$ and $I_{R_0}(t)\subset(S,T)$.

\vspace{1mm}

We now use the estimates in Theorem \ref{sobov} to establish the pointwise gradient estimate.

\begin{proof}[Proof of Theorem \ref{grad1}]
Let $\delta\in(0,1)$ be the small constant in \eqref{As1} (from Theorem~\ref{sobov}).
Recall $\theta_b:=1-d/p_b-2/q_b>0$. Define
\[
\vartheta:=\Big(\frac{[a]_{\infty,\alpha;R_0}}{\delta\lambda}\Big)^{1/\alpha}+
\Big(\frac{\|b\|_{q_b,p_b;R_0}}{\lambda/\Lambda_1^{1/q_b}}\Big)^{1/\theta_b}\frac{\lambda}{\Lambda_{1}}+
\frac{1}{\sqrt{T-S}\wedge 1},
\]
and
$$
R_1:=\frac{R_0}{\vartheta(\sqrt{T-S}\wedge 1)}.
$$
From the very definition, one sees that $R_1\leq\vartheta^{-1}\wedge R_0$. Hence
\[
\sup_{(s,y)\in Q_{R_1}(t,x)}|a(s,y)-a(s,x)|
\leq R_1^{\alpha}[a]_{\infty,\alpha;R_1}
\leq \vartheta^{-\alpha}[a]_{\infty,\alpha;R_0}\leq\delta\lambda,
\]
and
\[
R_1\lambda\Big(\lambda^{-1}\Lambda_1^{1/q_b}\|{b}\|_{q_b,p_b;R_1}\Big)^{1/\theta_b}
\leq \vartheta^{-1}\Lambda_1\Big(\lambda^{-1}\Lambda_1^{1/q_b}\|{b}\|_{q_b,p_b;R_0}\Big)^{1/\theta_b}\leq\Lambda_1.
\]
Thus one may apply \eqref{essob9} with $R=R_1$ to obtain
\begin{align}\label{Re2}
|\nabla_{x} u(t,x)|\leq\|\nabla_x u\|_{\infty;R_1/2}
&\lesssim \Lambda_1^{1/q}\Big(\frac{\Lambda_1}{R_1\lambda}\Big)^{2-\theta}\|{u}\|_{q,p;R_1}
+\frac{R_1^\theta\|{f}\|_{q,p;R_1}}{\lambda/\Lambda_1^{1/q}}.
\end{align}
From the definitions of $\vartheta$ and $\cG_{t}(x)$ (see \eqref{HH1}), it is easy to see that
$$
R_1=\frac{R_0}{\vartheta(\sqrt{T-S}\wedge 1)}\leq \frac{\Lambda_1(x)\,R_0}{\lambda(x)\,\cG_{t}(x)}=:R.
$$
Inserting this into \eqref{Re2} yields precisely the estimate stated in Theorem~\ref{grad1}.
\end{proof}

Using the local Schauder estimate in Theorem~\ref{Schau} we now prove the Hessian estimate.

\begin{proof}[Proof of Theorem~\ref{grad3}]
We divide the proof into two steps.

\vspace{1mm}
\noindent\textbf{Step 1. Pointwise Hessian bound \eqref{Ds0}.}
Let $\delta\in(0,1)$ be the small constant in \eqref{ABs1} (from Theorem~\ref{Schau}).
Define
\[
\vartheta:=\vartheta(x):=\Big(\frac{[a]_{\infty,\alpha;R_0}}{\delta\lambda}\Big)^{1/\alpha}+
\frac{\|b\|_{\infty,\alpha;R_0}}{\Lambda_1}+\frac{1}{\sqrt{T-S}\wedge 1},
\]
and
\[
R_1:=\frac{R_0}{\vartheta(\sqrt{T-S}\wedge 1)}.
\]
Then $R_1\leq\vartheta^{-1}\wedge R_0$ and, by construction,
\[
R_1^{\alpha}[{a}]_{\infty,\alpha;R_1}\leq \vartheta^{-\alpha}[{a}]_{\infty,\alpha;R_0}\leq\delta\lambda,
\qquad
R_1\|{b}\|_{\infty,\alpha;R_1}\leq \vartheta^{-1}\|{b}\|_{\infty,\alpha;R_0}\leq \Lambda_1.
\]
Thus one may apply Theorem \ref{Schau} with $R=R_1$ to obtain
\begin{align}\label{eq:Schauder-on-QR}
[\nabla_{x}^{2}u]_{\infty,\alpha;R_1/2}
&\lesssim\Big(\frac{\Lambda_1}{R_1\lambda}\Big)^{2+\alpha}\|u\|_{\infty;R_1}
+\frac{\|{f}\|_{\infty;R_1}+R_1^{\alpha}[f]_{\infty,\alpha;R_1}}{R_1^\alpha\lambda}.
\end{align}
We now convert \eqref{eq:Schauder-on-QR} into a pointwise bound.
Define the spatial rescaling
\[
u_{R_1}(s,y):=u(s,x+R_1y), \qquad y\in B_{1/2}(0).
\]
Applying the interpolation inequality \eqref{Ho1} with $\sigma=1/4$, $\tau=1/2$, we obtain for any $0<\eps\leq1$,
\begin{align*}
|\nabla^2_xu(t,x)|&\leq\|\nabla^2_xu\|_{\infty;R_1/4}
=R_1^{-2}\|\nabla^2_xu_{R_1}\|_{\infty;1/4}\\
&\lesssim R_1^{-2}\Big(\eps^{-2}\|u_{R_1}\|_{\infty;1/2}+\eps^\alpha[\nabla^2_xu_{R_1}]_{\infty,\alpha;1/2}\Big)\\
&=(R_1\eps)^{-2}\|{u}\|_{\infty;R_1/2}+(R_1\eps)^{\alpha}[\nabla_{x}^{2}{u}]_{\infty,\alpha;R_1/2}\\
&\!\!\stackrel{\eqref{eq:Schauder-on-QR}}{\lesssim} \frac{\|{u}\|_{\infty;{R_1}}}{(R_1\eps)^2}+(R_1\eps)^{\alpha}\Big(\frac{\Lambda_1}{R_1\lambda}\Big)^{2+\alpha}\|{u}\|_{\infty;R_1}\\
&\qquad+\frac{\eps^{\alpha}\big(\|{f}\|_{\infty;{R_1}}+R_1^\alpha[f]_{\infty,\alpha;R_1}\big)}{\lambda},
\end{align*}
which in turn yields by choosing $\eps=\lambda/\Lambda_1\;( \leq1)$,
\begin{align}\label{Dw1}
|\nabla^2_xu(t,x)|
\lesssim \Big(\frac{\Lambda_1}{R_1\lambda}\Big)^{2}\|{u}\|_{\infty;{R_1}}
+\frac{\|f\|_{\infty,\alpha;R_1}}{\lambda}.
\end{align}
From the definitions of $\vartheta$ and $\cH(x)$, it is easy to see that
$$
R_1=\frac{R_0}{\vartheta(\sqrt{T-S}\wedge1)}\leq \frac{\Lambda_1(x)\,R_0}{\lambda(x)\,\cH(x)}=:R.
$$
Inserting this into \eqref{Dw1} yields precisely the estimate \eqref{Ds0}.

\vspace{1mm}
\noindent\textbf{Step 2. H\"older seminorm estimate \eqref{Da3} under \eqref{LL2}.}
Assume in addition that \eqref{LL2} holds with constants $c_0\in(0,1]\in[1,\infty)$.
Define
\begin{align}\label{eq:def-theta-step2}
\vartheta'
:=\Big(\frac{[a]_{L^\infty([S,T];\bC^\alpha(B_1(x)))}}
{\delta\,c_0\,\lambda(x)}\Big)^{1/\alpha}
+\frac{\|b\|_{L^\infty([S,T];\bC^\alpha(B_1(x)))}}{c_0\Lambda(x)+1}
+\frac{2}{\sqrt{T-S}\wedge 1},
\end{align}
and
\begin{align}\label{eq:def-R-step2}
R_2:=\frac{R_0}{\vartheta'(\sqrt{T-S}\wedge 1)}.
\end{align}
By construction, we have $R_2\leq \frac{R_0}2\leq \frac12$, and for fixed $y\in B_{1/2}(x)$,
$$
B_{R_2}(y)\subset B_1(x).
$$
Moreover, using \eqref{LL2}, we have
\begin{align}\label{Sq2}
\lambda(y)\geq c_0\lambda(x),\ \ \Lambda_1(y)=\Lambda(y)+1\leq (c_0^{-1}+1)\Lambda_1(x).
\end{align}
By the choice of $R_2$ in \eqref{eq:def-R-step2} and \eqref{eq:def-theta-step2},
the smallness requirements of Theorem~\ref{Schau} are satisfied for $Q_{R_2}(t,y)$.
Thus, as in Step~1, we have
$$
|\nabla_x^2 u(t,y)|
\lesssim
\Big(\frac{\Lambda_1(y)}{\lambda(y)R_2}\Big)^2\|u\|_{L^\infty(Q_{R_2}(t,y))}
+\frac{\|f\|_{L^\infty\bC^\alpha(Q_{R_2}(t,y))}}
{\lambda(y)}.
$$
Recall $R=\frac{\Lambda_1(x)\,R_0}{\lambda(x)\,\cH(x)}$. Noting that
\begin{align}\label{Sq1}
(\delta c_0) R\leq R_2=\frac{R_0}{\vartheta'(\sqrt{T-S}\wedge 1)}\leq  R,
\end{align}
we further have
$$
Q_{R_2}(t,y)=I_{R_2}(t)\times B_{R_2}(y)\subset I_{R}(t)\times B_1(x)=:\tilde Q_R(t,x)
$$
Hence,
\begin{align}\label{eq:hess-pointwise-y}
|\nabla_x^2 u(t,y)|
&\lesssim
\Big(\frac{\Lambda_1(x)}{\lambda(x)R}\Big)^2\|u\|_{L^\infty(\tilde Q_{R}(t,x))}
+\frac{\|f\|_{L^\infty\bC^\alpha(\tilde Q_{R}(t,x))}}
{\lambda(x)}.
\end{align}
This bound holds for all $y\in B_{1/2}(x)$.

\smallskip

Take $y,y'\in B_{1/2}(x)$ and consider two cases.

\smallskip
\noindent\underline{Case 1: $|y-y'|>R_2/2$.}
Using \eqref{eq:hess-pointwise-y} for $y$ and $y'$, \eqref{Sq1} and \eqref{Sq2} we obtain
\begin{align*}
&\frac{|\nabla_x^2u(t,y)-\nabla_x^2u(t,y')|}{|y-y'|^\alpha}
\leq (R_2/2)^{-\alpha}\big(|\nabla_x^2u(t,y)|+|\nabla_x^2u(t,y')|\big)\no\\
&\qquad\lesssim
\Big(\frac{\Lambda_1(x)}{\lambda(x)R}\Big)^{2+\alpha}\|u\|_{L^\infty(\tilde Q_{R}(t,x))}
+\frac{\|f\|_{L^\infty\bC^\alpha(\tilde Q_{R}(t,x))}}{\lambda(x)}.
\end{align*}

\smallskip
\noindent\underline{Case 2: $|y-y'|\leq R_2/2$.}
In this case, $y'\in B_{R_2/2}(y)$ and hence
\[
\frac{|\nabla_x^2u(t,y)-\nabla_x^2u(t,y')|}{|y-y'|^\alpha}
\leq [\nabla_x^2 u(t,\cdot)]_{\bC^\alpha(B_{R_2/2}(y))}.
\]
Applying \eqref{eq:Schauder-on-QR} to the cylinder $Q_{R_2}(t,y)$, and by \eqref{Sq1} and \eqref{Sq2},
 we get
\begin{align*}
[\nabla_x^2 u(t,\cdot)]_{\bC^\alpha(B_{R_2/2}(y))}
&\lesssim
\Big(\frac{\Lambda_1(y)}{\lambda(y)R_2}\Big)^{2+\alpha}\|u\|_{L^\infty(Q_{R_2}(t,y))}
+\frac{\|f\|_{L^\infty(Q_{R_2}(t,y))}}{\lambda(y)R_2^\alpha}
+\frac{[f]_{L^\infty\bC^\alpha(Q_{R_2}(t,y))}}{\lambda(y)}\no\\
&\lesssim
\Big(\frac{\Lambda_1(x)}{\lambda(x)R}\Big)^{2+\alpha}\|u\|_{L^\infty(\tilde Q_R(t,x))}
+\frac{\|f\|_{L^\infty(\tilde Q_{R}(t,x))}}{\lambda(x)R^\alpha}
+\frac{[f]_{L^\infty\bC^\alpha(\tilde Q_{R}(t,x))}}{\lambda(x)}.
\end{align*}
Combining the above two cases,
we obtain the desired estimate \eqref{Da3}.
\end{proof}

\section{{Pointwise gradient and Hessian estimates for SDEs}}

{In this section we combine the local PDE estimates obtained in Theorems~\ref{grad1} and~\ref{grad3}
with probabilistic representations to derive pointwise gradient and Hessian bounds for solutions of SDEs
whose coefficients may be unbounded or singular.}

\subsection{{Probabilistic solutions of Kolmogorov equations}}
{Throughout this and the next subsection, we fix $-\infty<S<T<\infty$ and a $d$-dimensional standard Brownian motion
$(W_t)_{t\in[S,T]}$.}
Consider the time-inhomogeneous SDE
\begin{align}\label{SDE}
\dif X_{t,s}(x)
=b\big(s, X_{t,s}(x)\big)\dif s+\sqrt{2}\,\sigma\big(s, X_{t,s}(x)\big)\dif W_s,
\qquad X_{t,t}(x)=x\in\mathbb{R}^{d},
\end{align}
where $S\leq t\leq s\leq T$, and
$b:[S,T]\times\mathbb{R}^d\to\mathbb{R}^d$ and
$\sigma:[S,T]\times\mathbb{R}^d\to\mathbb{R}^d\otimes\mathbb{R}^d$
are measurable.
{We always set}
\[
a(t,x):=(\sigma\sigma^*)(t,x).
\]

{We begin with a local Sobolev regularity statement for probabilistic solutions.}

\begin{theorem}\label{Th37}
Let $\alpha\in(0,1)$ and let $(q_b,p_b)\in(1,\infty]$ satisfy $\tfrac 2{q_b}+\tfrac d{p_b}<1$.
Assume that $\sigma$ is uniformly elliptic, i.e., $\lambda I_d\leq a(t,x)\leq\Lambda I_d$, and
\[
\sigma\in L^\infty\!\big([S,T];\mathbf{C}^{\alpha}(\mathbb{R}^{d})\big),
\qquad
{b\in L^{q_b}\big([S,T]; L^{p_b}(\mathbb{R}^d)\big).}
\]
Let $q\in(1,\infty)\cap(1,q_b]$ and $p\in(1,\infty)\cap(1,p_b]$ satisfy
$\tfrac d{p}+\tfrac 2{q}<2.$
Given $f\in L^q([S,T]; L^p(\mathbb{R}^d))$ and $g\in L^\infty(\mathbb{R}^d)$, define
\begin{align*}
u(t,x):= \mathbb{E}\big[ g\big(X_{t,T}(x)\big)\big]
+\mathbb{E}\!\left(\int_t^Tf\big(s,X_{t,s}(x)\big)\dif s\right).
\end{align*}
Then
{$u\in \mW^{1,2}_{q,p;\mathrm{loc}}\big((S,T)\times \mathbb{R}^{d}\big)$ solves \eqref{pde31} in the sense of Definition~\ref{Def31}.}
Moreover, for any $m\in\mathbb{N}$ and $t\in\bigl(0,\frac{T-S}2\bigr)$,
\begin{align}\label{DS4}
\|\nabla^2_xu\|_{L^q([S+t,T-t]; L^p(B_m))}
\lesssim_{C_{t,S,T,m}}
\|u\|_{L^\infty([S,T]\times B_{2m})}+\|f\|_{L^q([S,T]; L^p(B_{2m}))},
\end{align}
where $C_{t,S,T,m}>0$ depends only on $t,S,T,m,p,q,p_b,q_b,d$,
{the ellipticity constants of $\sigma$ on $[S,T]\times B_{2m}$,}
and the quantities
$\|\sigma\|_{L^\infty([S,T]; \mathbf{C}^\alpha(B_{2m}))}$ and
$\|b\|_{L^{q_b}([S,T]; L^{p_b}(B_{2m}))}$.
\end{theorem}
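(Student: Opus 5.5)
Under the global hypotheses of Theorem~\ref{Th37} (uniform ellipticity, $\sigma\in L^\infty\mathbf C^\alpha$, $b\in L^{q_b}L^{p_b}$ with $\tfrac2{q_b}+\tfrac d{p_b}<1$), I would first construct $u$ analytically and then identify it with the probabilistic formula.

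\textbf{Step 1: smooth approximation.} Mollify to get smooth bounded coefficients $\sigma_n,b_n$ with the same ellipticity bounds. Take $f_n\in C_c^\infty$ with $f_n\to f$ in $L^q_tL^p_x$, and $g_n\in C_b^\infty$ with $g_n\to g$ a.e. and $\|g_n\|_\infty\le\|g\|_\infty$. For these data the Kolmogorov equation \eqref{pde31} with terminal value $g_n$ has a classical solution $u_n$. By It\^o's formula this solution equals $\mathbb E g_n(X^n_{t,T}(x))+\mathbb E\int_t^T f_n(s,X^n_{t,s}(x))\dif s$.

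\textbf{Step 2: uniform bounds.} I would bound $u_n$ uniformly in $n$ on $[S,T]\times B_{2m}$. The $g$-part is bounded by $\|g\|_\infty$. For the $f$-part, use the Krylov estimate
\[
\mathbb E\int_t^T|f(s,X_{t,s})|\dif s\lesssim\|f\|_{L^q L^p},
\]
which holds for $\tfrac d p+\tfrac 2q<2$. Its constant is uniform for uniformly elliptic $\sigma$ with uniform H\"older norm and $b\in L^{q_b}L^{p_b}$ with $\tfrac2{q_b}+\tfrac d{p_b}<1$ (see \cite{XZ20}).

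I expect this step to be the main obstacle, in two respects.
\begin{itemize}
\item The Krylov estimate needs $n$-uniform constants and only $\tfrac dp+\tfrac2q<2$, not $<1$.
\item Estimate \eqref{DS4} uses the local norm $\|f\|_{L^q L^p(B_{2m})}$, whereas the Krylov bound sees global $f$. I would keep this separate: \eqref{DS4} is applied to the limit solution $u$ and contains only $\|u\|_\infty$ locally, so no global control is needed there.
\end{itemize}

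\textbf{Step 3: local $\mathbb W^{1,2}$ bounds.} Apply the interior maximal regularity estimate \eqref{essob} of Theorem~\ref{sobov} on small cylinders $Q_r(s,y)$ with $s\in[S+t,T-t]$ and $y\in B_m$. Choose $r\le\sqrt t$ so small that $r^\alpha[a]_{\infty,\alpha}\le\delta\lambda$, and cover $[S+t,T-t]\times B_m$ by finitely many such cylinders; this gives \eqref{DS4} for $u_n$. The right-hand side uses $\|u_n\|_{q,p;r}\lesssim\|u_n\|_{L^\infty}$. Then control $\partial_t u_n$ through the equation, using Lemma~\ref{inter1} or the bound \eqref{As22} on $\nabla u_n$ in $L^{q_1}L^{p_1}$. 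This gives a uniform bound on $u_n$ in $\mathbb W^{1,2}_{q,p}$ on compact subsets.

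\textbf{Step 4: passage to the limit.} By weak compactness together with local compact embedding (for instance $\nabla u_n\to\nabla u$ strongly in $L^{q_1}L^{p_1}_{\rm loc}$), a subsequence converges to some $\bar u\in\mathbb W^{1,2}_{q,p;\rm loc}$. The products $b_n\cdot\nabla u_n$ and $(a_n-a)\nabla^2u_n$ pass to the limit, so $\bar u$ solves \eqref{pde31} in the sense of Definition~\ref{Def31}, and \eqref{DS4} survives by lower semicontinuity.

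It remains to show $\bar u=u$. Weak well-posedness gives $X^n_{t,\cdot}(x)\to X_{t,\cdot}(x)$ in law for each $x$. Combined with the Krylov estimate, which controls the convergence $f_n\to f$ uniformly in $n$, this yields $u_n(t,x)\to u(t,x)$. For the $g$-term, the heat-kernel bounds give $X_{t,T}$ an absolutely continuous law for $t<T$, so a.e. convergence of $g_n$ suffices.
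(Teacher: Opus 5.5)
Your proposal is correct and follows essentially the same route as the paper: mollify, bound $u_n$ uniformly in sup norm, apply \eqref{essob} on small cylinders with a finite covering, and pass to the limit in the weak formulation. Your $n$-uniform Krylov estimate plays the role of the paper's Gaussian upper bound \eqref{DS3} combined with H\"older's inequality, and your explicit control of $\partial_t u_n$ for compactness is a point the paper glosses over. The one step to tighten is the $g$-term: absolute continuity of the law of $X_{t,T}(x)$ alone does not give $\mathbb E g_n(X^n_{t,T}(x))\to\mathbb E g(X_{t,T}(x))$ when both $g_n$ and $X^n$ vary, so invoke the $n$-uniform density bound \eqref{DS3} there, as the paper does.
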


\begin{proof}
{\textbf{Step 1. Smooth approximation.}}
Let $\sigma_n, b_n$ and $f_n,g_n$ be smooth approximations of $\sigma,b$ and $f,g$, respectively, obtained via standard spatial mollification, so that
\[
\sigma_n, b_n,  g_n\in L^\infty\big([S,T]; C^\infty_b(\mathbb{R}^d)\big),\ \ f_n\in L^q\big([S,T]; C^\infty_b(\mathbb{R}^d)\big),
\]
and
\[
{(\sigma_n,b_n,f_n,g_n)\to(\sigma,b,f,g)\ \text{locally in the natural norms as }n\to\infty.}
\]
Denote by $X_{t,s}(x)$ and $X^n_{t,s}(x)$ the weak solutions of \eqref{SDE} corresponding to
$(\sigma,b)$ and $(\sigma_n,b_n)$, respectively.
By \cite{XZ20}, for each $S\leq t<s\leq T$ and $x\in\mathbb{R}^d$,
\begin{align}\label{DS03}
X^n_{t,s}(x)\to X_{t,s}(x)\quad\text{{in law} as }n\to\infty,
\end{align}
{and $X^n_{t,s}(x)$ admits a transition density $p_n(t,x;s,y)$ satisfying Gaussian two-sided bounds with constants depending only on the ellipticity and the local $L^{q_b}_tL^{p_b}_x$-norm of $b$; namely,}
for all $S\leq t<s\leq T$ and $x,y\in\mathbb{R}^d$,
\begin{align}\label{DS3}
\frac{C_0}{(s-t)^{d/2}}\e^{-\lambda_0|x-y|^2/(s-t)}
\leq p_n(t,x;s,y)\leq
\frac{C_1}{(s-t)^{d/2}}\e^{-\lambda_1|x-y|^2/(s-t)},
\end{align}
with constants $C_0,C_1,\lambda_0,\lambda_1$ independent of $n$.

Define $a_n:=\sigma_n\sigma_n^*$ and
\[
u_n(t,x):= \mathbb{E}\big[ g_n\big(X^n_{t,T}(x)\big)\big]
+\mathbb{E}\!\left(\int_t^Tf_n\big(s,X^n_{t,s}(x)\big)\dif s\right).
\]
{Then $u_n\in L^\infty([S,T]; C^\infty_b(\mathbb{R}^d))$ and solves \eqref{pde31} with coefficients $(a_n,b_n)$ and source term $f_n$.}

{\textbf{Step 2. Local $\mW^{2,p}$-bounds via Theorem~\ref{sobov}.}}
Fix $(t,x)\in(S,T)\times{\mathbb R}^d$. Choose
\[
{0<R<\sqrt{T-t}\wedge\sqrt{t-S}\wedge 1}
\]
small enough independent of $n$ so that $Q_R(t,x)\subset(S,T)\times{\mathbb R}^d$ and
for all $n\in\mathbb{N}$,
\[
\sup_{(s,y)\in Q_{R}(t,x)}\|a_n(s,y)-a_n(s,x)\|_{\mathrm{HS}}
\leq R^{\alpha}\sup_{s\in[S,T]}[a_n(s,\cdot)]_{\bC^\alpha(B_1(x))}
\leq \delta\,\lambda,
\]
where $\delta$ is the constant from \eqref{As1}.
Applying \eqref{essob} to $u_n$ on $Q_R(t,x)$ yields
\begin{align*}
\|\nabla^2_x u_n\|_{\mL^q_tL^p_x(Q_{R/2}(t,x))}
&\lesssim
\Big[1+\|b_n\|_{\mL^{q_b}_{p_b}(Q_{R}(t,x))}^{2/\theta_b}\Big]
\|u_n\|_{\mL^q_tL^p_x(Q_{R}(t,x))}+\|f_n\|_{\mL^q_tL^p_x(Q_{R}(t,x))},
\end{align*}
where $\theta_b:=1-\frac{2}{q_b}-\frac{d}{p_b}$ and the implicit constant depends only on
$R,p,q,p_b,q_b,d,\alpha$ and the ellipticity constants of $\sigma$ on $[t-R^2,t+R^2]\times B_2(x)$.
Since mollification does not increase local $L^{q_b}_tL^{p_b}_x$ norms,
a finite covering argument over $[S+t,T-t]\times B_m$ then yields: for any $m\in\mathbb{N}$ and $t\in(0,\frac{T-S}{2})$,
\begin{align}\label{DQ21}
\|\nabla^2_xu_n\|_{L^q([S+t,T-t];L^p(B_m))}
\lesssim \|u_n\|_{L^\infty([S,T]\times B_{2m})}+\|f\|_{L^q([S,T]; L^p(B_{2m}))},
\end{align}
with the stated dependence of constants.
Note that by \eqref{DS3} and H\"older's inequality,
\begin{align*}
\mathbb{E}\!\left(\int_t^Tf_n\big(s,X^n_{t,s}(x)\big)\dif s\right)
&\leq\int_t^T\int_{\mathbb{R}^d} f_n\big(s,y\big)\frac{C_1}{(s-t)^{d/2}}\e^{-\lambda_1|x-y|^2/(s-t)}\dif y\dif s\\
&\lesssim \int_t^T(s-t)^{-d/(2p)}\|f_n(s,\cdot)\|_{L^p}\dif s
\lesssim \|f\|_{\mL^q_tL^p_x([S,T]\times\mathbb{R}^d)},
\end{align*}
where in the last step we have used that $d/p+2/q<2$.
Thus, there is a constant $C>0$ independent of $n$ such that
$$
\|u_n\|_\infty\leq \|g\|_\infty+C\|f\|_{\mL^q_tL^p_x}.
$$
Hence, by \eqref{DQ21},
$$
\|\nabla^2_xu_n\|_{L^q([S+t,T-t];L^p(B_m))}\lesssim\|g\|_\infty+\|f\|_{\mL^q_tL^p_x([S,T]\times\mathbb{R}^d)},
$$
where the implicit constant does not depend on $n$.

{\textbf{Step 3. Passage to the limit.}}
By \eqref{DS03} and dominated convergence (using the uniform Gaussian bounds \eqref{DS3}),
{for each $(t,x)\in(S,T)\times{\mathbb R}^d$ one has $u_n(t,x)\to u(t,x)$.}
Moreover, \eqref{DQ21} implies that (up to a subsequence) for each $m\in\mathbb{N}$ and $t\in(0,\frac{T-S}{2})$,
\[
\nabla^2_xu_{n}\rightharpoonup \nabla^2_xu\quad\text{weakly in }L^q([S+t,T-t]; L^p(B_m)),
\]
and by the interpolation inequality,
\[
{\nabla_xu_n\rightarrow \nabla_xu\ \text{strongly in $L^q([S+t,T-t]; L^p(B_m))$}.}
\]
Now take $\phi\in C^\infty_c((S+t,T-t)\times B_m)$. Since $u_n$ solves the weak form of \eqref{pde31} with coefficients $(a_n,b_n)$ and source $f_n$,
\[
\langle\!\langle u_n,\partial_t\phi\rangle\!\rangle
=\langle\!\langle\sL_n u_n + f_n, \phi\rangle\!\rangle,
\]
where $\sL_n\varphi(t,x):=\tr\big(a_n(t,x)\cdot\nabla_x^2\varphi(x)\big)+b_n(t,x)\cdot\nabla_x\varphi(x)$.
{Using $a_n\to a$ and $b_n\to b$ strongly in local $L^{q_b}_tL^{p_b}_x$,
together with the weak convergence of $\nabla^2_xu_n$ and the strong convergence of $\nabla_x u_n$,
we may pass to the limit in each term and conclude that $u$ satisfies \eqref{pde31} in the sense of Definition~\ref{Def31}.}
Finally, taking weak lower semicontinuity limits in \eqref{DQ21} yields \eqref{DS4}.
\end{proof}

\begin{remark}\label{Re42}
If, in addition to the assumptions of Theorem \ref{Th37},
we also suppose that $b,f\in L^\infty([S,T];\bC^\alpha(\mathbb{R}^d))$;
then by using the Schauder estimate
in Theorem \ref{Schau},
$$
u\in L^\infty_{loc}((S,T);\bC^{2+\alpha}_{loc}(\mathbb{R}^d)),
$$
and for any $m\in\mathbb{N}$ and $t\in\bigl(0,\frac{T-S}2\bigr)$,
\begin{align*}
[\nabla^2_xu]_{L^\infty([S+t,T-t]; \bC^{2+\alpha}(B_m))}
\lesssim_{C_{t,S,T,m}}
\|u\|_{L^\infty([S,T]\times B_{2m})}+\|f\|_{L^\infty([S+t,T-t]; \bC^{\alpha}(B_{2m}))}.
\end{align*}
\end{remark}

Next we introduce a Lyapunov-type hypothesis which provides global moment control and will be used to handle unbounded coefficients on the whole space.

\vspace{1mm}
\begin{enumerate}[$({\bf \widetilde H}^\sigma_b)$]
\item  Assume $\sigma$ satisfies {\bf {(H$^\alpha_a$)}} with $a=\sigma\sigma^*$,
and $b\in L^{q_b}_{\rm loc}(\mathbb{R}; L^{p_b}_{\rm loc}(\mathbb{R}^d))$ for some $q_b,p_b\in(1,\infty]$ with
$\theta_b:=1-\frac {d}{p_b}-\frac {2}{q_b}>0$.
Moreover,
for every $(t,x)\in[S,T)\times\mathbb{R}^d$ the solution of SDE \eqref{SDE} does not explode,
and there exist two functions $\rho_0, \rho_1:\mathbb{R}^d\to(0,\infty)$ such that
\begin{align}\label{Lya1}
\sup_{S\leq t\leq s\leq T}\mathbb{E}\,\rho_0\big(X_{t,s}(x)\big)\leq \rho_1(x),\qquad x\in\mathbb{R}^d.
\end{align}
\end{enumerate}

Recall that the space $\mathcal{B}_{\rho_0}$ is defined by (\ref{WF}). We can now state the main result of this subsection.

\begin{theorem}\label{grad22}
Assume $({\bf \widetilde H}^\sigma_b)$ holds. For every $g\in\mathcal{B}_{\rho_0}$ and $f\in L^\infty([S,T]; \mathcal{B}_{\rho_0})$, define
\begin{align}\label{prob2}
u(t,x):= \mathbb{E}\big[ g\big(X_{t,T}(x)\big)\big]
+\mathbb{E}\!\left(\int_t^Tf\big(s,X_{t,s}(x)\big)\dif s\right).
\end{align}
Then for any $q\in(1,\infty)\cap(1,q_b]$ and $p\in(1,\infty)\cap(1,p_b]$
satisfying $\tfrac d{p}+\tfrac 2{q}<2$, we have
$u\in \mW^{1,2}_{q,p;\mathrm{loc}}\big((S,T)\times \mathbb{R}^{d}\big)$ and $u$ satisfies PDE \eqref{pde31}.
\end{theorem}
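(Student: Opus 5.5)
The plan is to localize and reduce to Theorem~\ref{Th37}. Fix a ball $B_m$ and a time $t_0\in(0,\frac{T-S}{2})$. Choose $m'\gg m$ and truncate the coefficients: let $\chi_{m'}$ be a smooth cutoff equal to $1$ on $B_{m'}$ and supported in $B_{2m'}$, and set
\[
b_{m'}:=\chi_{m'}b,\qquad \sigma_{m'}(t,x):=\sigma\big(t,\pi_{m'}(x)\big),
\]
where $\pi_{m'}$ is a Lipschitz retraction onto $\overline{B_{2m'}}$. Then $\sigma_{m'}$ is uniformly elliptic and globally $\mathbf C^\alpha$, since only local ellipticity and local H\"older norms enter, and $b_{m'}\in L^{q_b}([S,T];L^{p_b}(\mathbb R^d))$. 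Let $X^{m'}$ be the corresponding solution. By local uniqueness (weak well-posedness from \cite{XZ20}), $X^{m'}_{t,s}(x)=X_{t,s}(x)$ up to the exit time $\tau_{m'}$ of $B_{m'}$, at least in law jointly with $\tau_{m'}$. Non-explosion then gives $\tau_{m'}\to\infty$ almost surely, that is, $\tau_{m'}>T$ eventually.

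Next, handle the unbounded data $g,f$. Since $|g|\le\|g\|_{\mathcal B_{\rho_0}}\rho_0$, first truncate $g_N:=(g\wedge N)\vee(-N)$ and similarly $f_N$. By \eqref{Lya1}, $\mathbb E|g-g_N|(X_{t,T}(x))\le \mathbb E[\rho_0(X_{t,T})\mathbf 1_{\{\rho_0(X_{t,T})>N/\|g\|\}}]$. Rather than bounding this by uniform integrability, which is not available, I will use a different route: define
\[
u_{m'}(t,x):=\mathbb E\Big[g\big(X^{m'}_{t,T\wedge\tau}\big)\ \text{-type stopped representation}\Big].
\]
Cleaner still is to work directly on the cylinder $[S,T]\times B_{2m}$ and apply the strong Markov property at the exit time of $B_{2m}$. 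Concretely, with $\tau$ the exit time from $B_{2m}$ after $t$,
\[
u(t,x)=\mathbb E\Big[u\big(\tau\wedge T,X_{t,\tau\wedge T}\big)+\int_t^{\tau\wedge T}f(s,X_{t,s})\,\dif s\Big].
\]
The interior dynamics before $\tau$ coincide with those of $X^{m'}$. The boundary datum $h:=u$ restricted to the parabolic boundary is bounded there by $\rho_1$, which is locally bounded as needed; I will assume local boundedness of $\rho_1$, implicit in the use of $\|\rho_1\|_{L^\infty(B_1(x))}$ later. So on $[S,T]\times B_{2m}$, $u$ coincides with $v(t,x):=\mathbb E[\tilde g(X^{m'}_{t,T})]+\mathbb E\int_t^T\tilde f(s,X^{m'}_{t,s})\,\dif s$ plus an error term that is controlled by $\mathbb P(\tau_{m'}\le T)\to0$. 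Here $\tilde g=g\chi$ and $\tilde f=f\chi$ are bounded.

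The key step is then an approximation argument. Set $u_{m',N}:=$ the representation \eqref{prob2} for the truncated SDE $X^{m'}$ with bounded truncated data $g_N\chi_{m'}$ and $f_N\chi_{m'}$. By Theorem~\ref{Th37}, $u_{m',N}\in\mW^{1,2}_{q,p;\mathrm{loc}}$ solves \eqref{pde31} with coefficients $(a_{m'},b_{m'})$, which equal $(a,b)$ on $B_{m'}$. Estimate \eqref{DS4} gives $\|\nabla^2u_{m',N}\|_{L^q([S+t_0,T-t_0];L^p(B_m))}\lesssim \|u_{m',N}\|_{L^\infty([S,T]\times B_{2m})}+\|f_{N}\|_{L^q L^p(B_{2m})}$. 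The constant depends only on the coefficients on $B_{2m}$, hence is uniform in $m',N$ once $m'>2m$. The right side is bounded by $\sup_{B_{2m}}\rho_1\cdot(\|g\|_{\mathcal B_{\rho_0}}+(T-S)\|f\|)$, using $|u_{m',N}|\le \mathbb E\rho_0(X^{m'})$-type bounds. To keep this valid, I will choose the truncation compatible with \eqref{Lya1}, using the fact that $X^{m'}=X$ before $\tau_{m'}$ and handling the rest by $\rho_0$ being bounded on $B_{2m'}$.

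Finally, pass to the limit. Pointwise $u_{m',N}\to u$ as $N\to\infty$ and then $m'\to\infty$, by dominated convergence with majorant $\rho_0(X_{t,s})$ (integrable by \eqref{Lya1}) and $\mathbb P(\tau_{m'}\le T)\to0$. The uniform local bounds give weak compactness of $\nabla^2 u_{m',N}$ in $L^q_tL^p_x$ and strong convergence of gradients by interpolation, exactly as in Step~3 of Theorem~\ref{Th37}. Then pass to the limit in the weak formulation; the coefficients stabilize on $B_m$ for $m'>m$. I expect the main obstacle to be this limiting step with only $\rho_0$-integrability: the pointwise convergence and local $L^\infty$ bounds of $u_{m',N}$ on $B_{2m}$ must be uniform. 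This is where I would use that $X^{m'}$ and $X$ agree up to $\tau_{m'}$ and bound the discrepancy by $\mathbb E[\rho_0(X)\mathbf 1_{\tau_{m'}\le T}]+\sup_{B_{2m'}}|g_N|\,\mathbb P(\tau_{m'}\le T)$. To make the second term small, I will pick $N=N(m')$ growing slowly.
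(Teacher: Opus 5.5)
Your ingredients are the right ones: truncating the coefficients, equality in law up to the exit time, non-explosion, the uniform local bound \eqref{DS4}, truncating the data, and \eqref{Lya1} with dominated convergence. But the way you couple the two limits leaves a genuine gap, exactly at the step you flagged. To use \eqref{DS4} you need $\sup_{[S,T]\times B_{2m}}|u_{m',N}|$ bounded independently of $(m',N)$. To identify the limit you need $u_{m',N}(t,x)\to u(t,x)$ at (almost) every point. Since $u_{m',N}$ is built from the truncated process $X^{m'}$, for which \eqref{Lya1} says nothing, your splitting at $\tau_{m'}$ only gives
\[
|u_{m',N}(t,x)|\lesssim \rho_1(x)+N\,\mathbb{P}(\tau^{t,x}_{m'}\le T).
\]
So with $N=N(m')\to\infty$ you need $N(m')\sup_{(t,x)\in[S,T]\times B_{2m}}\mathbb{P}(\tau^{t,x}_{m'}\le T)\to 0$. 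Non-explosion gives $\mathbb{P}(\tau^{t,x}_{m'}\le T)\to0$ only for each fixed $(t,x)$. Decay that is uniform on compact sets would need an extra argument (e.g.\ continuity of exit probabilities in $(t,x)$ plus Dini's theorem), which you do not provide. Without it, no choice of $N(m')$ independent of $(t,x)$ is guaranteed to work, either for the uniform bound or for pointwise convergence. The order you actually wrote ($N\to\infty$ first, then $m'\to\infty$) hits the same obstruction, with $N$ replaced by $\sup_{B_{2m'}}\rho_0$. Incidentally, uniform integrability is not what the data truncation needs. For fixed $(t,x)$ and the \emph{original} process, dominated convergence with majorant $\|g\|_{\mathcal{B}_{\rho_0}}\rho_0(X_{t,T}(x))$ is available from \eqref{Lya1}.

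The remedy is to decouple the limits, as the paper does.

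\textbf{Step 1 (bounded data).} For bounded $g,f$, remove only the coefficient truncation. Then $|u_{m'}|\le\|g\|_\infty+(T-S)\|f\|_\infty$ trivially. Pointwise convergence $u_{m'}\to u$ holds because the difference is at most $2\|g\|_\infty[\mathbb{P}(\tau_{m'}<T)+\mathbb{P}(\hat\tau_{m'}<T)]$ plus the analogous $f$-term, so no uniformity in $(t,x)$ is needed. This shows that the representation with the original $X$ and bounded data lies in $\mW^{1,2}_{q,p;\mathrm{loc}}$ and solves \eqref{pde31}.

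\textbf{Step 2 (general data).} Only then truncate the data, $g_n=g\chi_n$ and $f_n=f\chi_n$, keeping the original $X$. Now \eqref{Lya1} gives $|u_n(t,x)|\le(\|g\|_{\mathcal{B}_{\rho_0}}+(T-S)\|f\|_{L^\infty([S,T];\mathcal{B}_{\rho_0})})\rho_1(x)$. The local $\mW^{1,2}_{q,p}$ estimate, which only sees the coefficients on $B_{2m}$, then applies uniformly, and dominated convergence gives $u_n\to u$.

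In your notation, this is simply: let $m'\to\infty$ with $N$ fixed, then let $N\to\infty$.

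Two smaller remarks. The strong Markov representation at the exit from $B_{2m}$ is never used and can be dropped. Also, only $a=\sigma\sigma^*$ is assumed H\"older, so truncate $a$ (e.g.\ $\sqrt{a\circ\pi_{m'}}$, or the paper's $\sqrt{a\chi_n+(1-\chi_n)I_d}$) rather than $\sigma$ itself. This is harmless because the law of the solution depends only on $a$.
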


\begin{proof}
We argue in two steps by localizing the coefficients.

\textbf{Step 1. Bounded data.}
Assume first that $f$ and $g$ are bounded measurable.
For $n\in\mathbb{N}$, let $\chi_n\in C_c^\infty(\mathbb{R}^d)$ be nonnegative with
$\chi_n(x)=1$ for $|x|\leq n$ and $\chi_n(x)=0$ for $|x|>n+2$.
Define
\[
\sigma_n:=\sqrt{a_n}:=\sqrt{a\chi_n+(1-\chi_n)I_d},\qquad b_n:=b\chi_n.
\]
By the Lipschitz continuity of the matrix square-root on uniformly elliptic matrices, $\sigma_n$ inherits the local $C^\alpha$-regularity from $a$.
These coefficients satisfy the assumptions of Theorem~\ref{Th37}, and for $|x|\leq n$,
\begin{align}\label{NN1}
\sigma_n(t,x)=\sigma(t,x),\quad b_n(t,x)=b(t,x).
\end{align}
Let $X^n_{t,s}(x)$ be the solution to \eqref{SDE} associated with $(\sigma_n,b_n)$, and set
\[
\tau_n:=\inf\{s>t: |X^n_{t,s}(x)|\geq n\},\qquad
\hat\tau_n:=\inf\{s>t: |X_{t,s}(x)|\geq n\}.
\]
The nonexplosion condition together with \cite[p.32, Theorem~10.4]{Pin95} gives
\begin{align}\label{DS2}
\lim_{n\to\infty}\mathbb{P}(\hat\tau_n<T)=0,
\qquad
{\lim_{n\to\infty}\mathbb{P}(\tau_n<T)=0,}
\end{align}
and for each $S\leq t\leq s\leq T$, $x\in\mathbb{R}^d$ and $n\in\mathbb{N}$,
\begin{align}\label{Con1}
\mbox{ $X^n_{t,{s\wedge\tau_n}}(x)$ has the same law as $X_{t,s\wedge\hat\tau_n}(x)$.}
\end{align}

Define
\[
u_n(t,x):= \mathbb{E}\big[ g\big(X^n_{t,T}(x)\big)\big]
+\mathbb{E}\!\left(\int_t^Tf\big(s,X^n_{t,s}(x)\big)\dif s\right).
\]
Since $|u_n(t,x)|\leq \|g\|_\infty+(T-S)\|f\|_\infty$,
\eqref{NN1} and Theorem~\ref{Th37} imply that for any $m\in\mathbb{N}$ and $t\in(0,\frac{T-S}{2})$,
\begin{align}\label{DQ1}
\sup_{n\geq 2m}\|\nabla^2_xu_n\|_{L^q([S+t,T-t];L^p(B_m))}<\infty,
\end{align}
and for any $\phi\in C^\infty_c((S+t,T-t)\times B_m)$,
\begin{align}\label{DQ2}
\langle\!\langle u_n,\partial_t\phi\rangle\!\rangle=\langle\!\langle\sL_n u_n + f, \phi\rangle\!\rangle,
\end{align}
where $\sL_n\varphi(t,x):=\tr\big(a_n(t,x)\cdot\nabla_x^2\varphi(x)\big)+b_n(t,x)\cdot\nabla_x\varphi(x)$.
Moreover, for each $(t,x)\in(S,T)\times\mathbb{R}^d$, $u_n(t,x)\to u(t,x)$ pointwise.
Indeed,
\begin{align*}
&\bigl|\mathbb{E}\big[ g\big(X^n_{t,T}(x)\big)\big]-\mathbb{E}\big[ g\big(X_{t,T}(x)\big)\big]\bigr|\\
&\quad\leq \bigl|\mathbb{E}\big[ g\big(X^n_{t,T\wedge\tau_n}(x)\big)\big]
-\mathbb{E}\big[ g\big(X_{t,T\wedge\hat\tau_n}(x)\big)\big]\bigr|
+2\|g\|_\infty\bigl[\mathbb{P}(\tau_n<T)+\mathbb{P}(\hat\tau_n<T)\bigr]\\
&\quad\stackrel{\eqref{Con1}}{=}2\|g\|_\infty\bigl[\mathbb{P}(\tau_n<T)+\mathbb{P}(\hat\tau_n<T)\bigr]
\stackrel{\eqref{DS2}}{\longrightarrow}0,
\end{align*}
and similarly for the integral term involving $f$.
{Using \eqref{DQ1} and passing to the limit in \eqref{DQ2} exactly as in the proof of Theorem~\ref{Th37}, we conclude that $u\in \mW^{1,2}_{q,p;{\rm loc}}$ and solves \eqref{pde31}.}

\textbf{Step 2. General data in $\mathcal B_{\rho_0}$.}
{Let $g_n:=g\chi_n$ and $f_n:=f\chi_n$. Then $g_n$ and $f_n$ are bounded, and by Step~1 the corresponding probabilistic solution}
\[
u_n(t,x):=\mathbb{E}\big[g_n\big(X_{t,T}(x)\big)\big]
+\mathbb{E}\!\left(\int_t^Tf_n\big(s,X_{t,s}(x)\big)\dif s\right)
\]
{belongs to $\mW^{1,2}_{q,p;{\rm loc}}$ and solves \eqref{pde31} with source term $f_n$.}
Moreover, since $g\in\mathcal{B}_{\rho_0}$ and $f\in L^\infty([S,T];\mathcal{B}_{\rho_0})$,
\begin{align*}
|u_n(t,x)|
&\leq \|g\|_{\mathcal{B}_{\rho_0}}\mathbb{E}\big[ \rho_0(X_{t,T}(x))\big]
+\|f\|_{L^\infty([S,T];\mathcal{B}_{\rho_0})}
\mathbb{E}\!\left(\int_t^T\rho_0(X_{t,s}(x))\dif s\right)\\
&\stackrel{\eqref{Lya1}}{\leq} \bigl(\|g\|_{\mathcal{B}_{\rho_0}}+(T-t)\|f\|_{L^\infty([S,T];\mathcal{B}_{\rho_0})}\bigr)\,\rho_1(x).
\end{align*}
{In particular, for each fixed $m$ the family $\{u_n\}_n$ is uniformly bounded on $[S,T]\times B_{2m}$, hence the local estimate \eqref{DS4} applies uniformly (with $f_n$ in place of $f$).}
Finally, since $g_n\to g$ and $f_n\to f$ pointwise and $|g_n|\leq |g|$, $|f_n|\leq |f|$,
{dominated convergence (using \eqref{Lya1}) yields $u_n(t,x)\to u(t,x)$ pointwise.}
Passing to the limit in the weak formulation as in Step~1 gives that $u\in \mW^{1,2}_{q,p;{\rm loc}}$ solves \eqref{pde31}.
\end{proof}

\begin{remark}
{In general, for $u$ defined by \eqref{prob2} one cannot expect the identity \eqref{Id1} to extend to $s=T$ without additional terminal regularity.
Consequently, uniqueness of such probabilistic solutions for \eqref{pde31} is not addressed here.}
\end{remark}

\begin{remark}\label{Re45}
In the situation of Theorem \ref{grad22}, if we assume $b\in L^\infty([S,T];\bC^\alpha_{loc}(\mathbb{R}^d))$
and $f\equiv0$, then by using remark \ref{Re42}, one can show $u\in L^\infty_{loc}((S,T);\bC^{2+\alpha}_{loc}(\mathbb{R}^d))$.
\end{remark}

\subsection{Derivative estimates for time-inhomogeneous SDEs}

Under the hypothesis $({\bf \widetilde H}^\sigma_b)$, for any $\varphi\in\mathcal{B}_{\rho_0}$ and $t\in[S,T]$ we define
\[
\mathcal{T}_{t,T}\varphi(x):=\mathbb{E}\,\varphi\big(X_{t,T}(x)\big).
\]
A direct combination of Theorems~\ref{grad22} and \ref{grad1} yields the following short-time gradient bound.

\begin{theorem}[Short-time gradient estimate]\label{Cor43}
Assume that $({\bf \widetilde H}^\sigma_b)$ holds.
For any $q\in(1,\infty)\cap(1,q_b]$ and $p\in(1,\infty)\cap(1,p_b]$ satisfying
$
\frac{d}{p}+\frac{2}{q}<1,
$
there exists a constant $C=C(d,q,p,q_b,p_b,\alpha)>0$ such that for any $\varphi\in\mathcal{B}_{\rho_0}$ and every
$(t,x)\in\big[\tfrac{T+S}{2},T\big)\times\mathbb{R}^{d}$,
\begin{align*}
\big|\nabla_x \mathcal{T}_{t,T}\varphi(x)\big|
\lesssim_{C}\;
\frac{(\sqrt{T-S}\vee1)\,\mathcal{G}_t(x)}{\sqrt{T-t}}\,
\Lambda_1^{1/q}(x)\left(\frac{\Lambda_1(x)}{\lambda(x)}\right)^{\frac{d}{p}+\frac{2}{q}}
\sup_{s\in[S,T]}\big\|\mathcal{T}_{s,T}\varphi\big\|_{L^\infty(B_1(x))},
\end{align*}
where $\Lambda_1(x):=\Lambda(x)+1$ and $\mathcal{G}_t(x)$ is defined by \eqref{HH1}.
\end{theorem}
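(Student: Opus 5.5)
The plan is to apply Theorem~\ref{grad1} to $u(t,x):=\mathcal{T}_{t,T}\varphi(x)$, i.e.\ \eqref{prob2} with $g=\varphi$ and $f\equiv0$. By Theorem~\ref{grad22}, since $\frac dp+\frac2q<1<2$, this $u$ lies in $\mW^{1,2}_{q,p;\mathrm{loc}}((S,T)\times\mathbb{R}^d)$ and solves \eqref{pde31} with $f=0$. Hence \eqref{grad0} gives, for $(t,x)\in[\frac{T+S}2,T)\times\mathbb{R}^d$,
\[
|\nabla_x u(t,x)|\lesssim \Lambda_1^{1/q}(x)\Big(\frac{\Lambda_1(x)}{\lambda(x)R}\Big)^{2-\theta}\|u\|_{\mL^q_tL^p_x(Q_R(t,x))},
\qquad R=\frac{\Lambda_1(x)R_0}{\lambda(x)\mathcal{G}_t(x)},
\]
with $\theta=1-\frac dp-\frac2q$.

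Next I bound the local norm crudely by the sup norm. Since $\mathcal{G}_t(x)\ge \Lambda_1/\lambda$, we have $R\le R_0\le1$ and $I_R(t)\subset I_{R_0}(t)\subset(S,T)$. Therefore
\[
\|u\|_{\mL^q_tL^p_x(Q_R(t,x))}\lesssim R^{d/p+2/q}\sup_{s\in[S,T]}\|\mathcal{T}_{s,T}\varphi\|_{L^\infty(B_1(x))}=R^{1-\theta}\sup_{s}\|\mathcal{T}_{s,T}\varphi\|_{L^\infty(B_1(x))},
\]
where the constant is just the volume of the unit ball. This is the same computation as in the remark after Theorem~\ref{grad1}.

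Combining the two bounds, the powers of $R$ give $R^{-(2-\theta)}R^{1-\theta}=R^{-1}$, and we are left with $\Lambda_1^{1/q}(\Lambda_1/\lambda)^{2-\theta}R^{-1}$. Inserting $R^{-1}=\lambda\mathcal{G}_t/(\Lambda_1R_0)$ and $R_0^{-1}=(\sqrt{T-S}\vee1)/\sqrt{T-t}$ yields
\[
\Lambda_1^{1/q}\Big(\frac{\Lambda_1}{\lambda}\Big)^{1-\theta}\frac{(\sqrt{T-S}\vee1)\mathcal{G}_t(x)}{\sqrt{T-t}},
\]
and $1-\theta=\frac dp+\frac2q$ is exactly the stated exponent.

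The only point requiring care is that Theorem~\ref{grad22} asks only for membership in $\mW^{1,2}_{q,p;\mathrm{loc}}$, which is precisely what Theorem~\ref{grad1} needs. Beyond that, the argument is direct bookkeeping of exponents, together with checking that $Q_R(t,x)\subset[S,T]\times B_1(x)$ so that the sup over $s\in[S,T]$ and $B_1(x)$ controls the local norm.
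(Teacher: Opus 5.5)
Your proposal is correct and matches the paper's proof essentially step for step. You use Theorem~\ref{grad22} to get $u=\mathcal{T}_{\cdot,T}\varphi\in\mW^{1,2}_{q,p;\mathrm{loc}}$, apply \eqref{grad0} with $f\equiv0$, bound the $\mL^q_tL^p_x(Q_R(t,x))$ norm by $R^{d/p+2/q}$ times the sup norm, and cancel powers of $R$ via $R^{-1}=\lambda\mathcal{G}_t/(\Lambda_1R_0)$. Your explicit check that $R\le R_0\le1$ and $I_R(t)\subset(S,T)$, so that $Q_R(t,x)\subset[S,T]\times B_1(x)$, is a detail the paper leaves implicit.
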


\begin{proof}
By Theorem~\ref{grad22}, the function $u(t,x):=\mathcal{T}_{t,T}\varphi(x)$ belongs to
$\mW^{1,2}_{q,p;\mathrm{loc}}\big((S,T)\times\mathbb{R}^d\big)$ and solves \eqref{pde31} with $f\equiv 0$.
Hence we may apply \eqref{grad0} to obtain
\begin{align}\label{Gr1}
\big|\nabla_x \mathcal{T}_{t,T}\varphi(x)\big|
\lesssim
\Lambda^{1/q}_1(x)\Big(\frac{\Lambda_1(x)}{\lambda(x)R}\Big)^{1+\frac dp+\frac2q}
\big\|\mathcal{T}_{\cdot,T}\varphi\big\|_{\mL^q_tL^p_x(Q_{R}(t,x))},
\end{align}
where $R_0:=\frac{\sqrt{T-t}}{\sqrt{T-S}\vee1}$ and
$R:=\frac{\Lambda_{1}(x)R_{0}}{\lambda(x)\mathcal{G}_t(x)}\leq R_0$.

By the definition of $\mL^q_p(Q_{R}(t,x))$ and the bound $L^\infty\hookrightarrow L^p$ on bounded sets,
\[
\big\|\mathcal{T}_{\cdot,T}\varphi\big\|_{\mL^q_p(Q_{R}(t,x))}
\lesssim R^{\frac{2}{q}+\frac{d}{p}}\,
\big\|\mathcal{T}_{\cdot,T}\varphi\big\|_{L^\infty(I_R(t)\times B_R(x))}.
\]
Since $\theta=1-\frac{d}{p}-\frac{2}{q}$, inserting this bound into \eqref{Gr1} yields
\[
\big|\nabla_x \mathcal{T}_{t,T}\varphi(x)\big|
\lesssim
\Lambda_1^{1/q}(x)\left(\frac{\Lambda_1(x)}{\lambda(x)}\right)^{\frac{d}{p}+\frac{2}{q}}
\frac{\mathcal{G}_t(x)}{R_0}\,
\sup_{s\in[S,T]}\big\|\mathcal{T}_{s,T}\varphi\big\|_{L^\infty(B_1(x))}.
\]
Finally, since $R_0^{-1}=(\sqrt{T-S}\vee1)/\sqrt{T-t}$, the desired estimate follows.
\end{proof}

Similarly, combining Remark~\ref{Re45} and \ref{grad3} yields a short-time Hessian estimate.

\begin{theorem}[Short-time Hessian estimate]\label{Cor44}
Assume that $({\bf \widetilde H}^\sigma_b)$ and \eqref{LL2} hold. Suppose further that
$b\in L^\infty\big([S,T];\mathbf{C}^\alpha_{\mathrm{loc}}(\mathbb{R}^{d})\big)$ with the same $\alpha$ as in {\bf (H$^\alpha_a$)}.
Then for $\beta\in\{0,\alpha\}$ there exists a constant $C=C(d,\alpha)>0$ such that for any $\varphi\in\cB_{\rho_0}$ and all
$(t,x)\in\big[\tfrac{T+S}{2},T\big)\times \mathbb{R}^{d}$,
\begin{align*}
\big[\nabla_{x}^{2}\mathcal{T}_{t,T}\varphi\big]_{\bC^\beta(B_{1/2}(x))}
\lesssim_{C}\left[\frac{(\sqrt{T-S}\vee1)\cH(x)}{\sqrt{T-t}}\right]^{2+\beta}
\sup_{s\in[S,T]}\big\|\mathcal{T}_{s,T}\varphi\big\|_{L^\infty(B_1(x))},
\end{align*}
where $\cH(x)$ is defined in \eqref{HH2}.
\end{theorem}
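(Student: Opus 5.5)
The proof mirrors that of Theorem~\ref{Cor43}, with Theorem~\ref{grad3} replacing Theorem~\ref{grad1}. By Remark~\ref{Re45}, the function $u(t,x):=\mathcal{T}_{t,T}\varphi(x)$ belongs to $L^\infty_{loc}((S,T);\bC^{2+\alpha}_{loc}(\mathbb{R}^d))$ and solves \eqref{pde31} with $f\equiv0$, since $b\in L^\infty([S,T];\bC^\alpha_{loc})$. Theorem~\ref{grad3} is therefore applicable at every $(t,x)\in[\frac{T+S}2,T)\times\mathbb{R}^d$. The only concern is the regularity class: Remark~\ref{Re42} was stated under global uniform ellipticity. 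I would handle this as in Step~1 of Theorem~\ref{grad22}: localize the coefficients by $\chi_n$, obtain local Schauder bounds uniform in $n$ from Theorem~\ref{Schau}, and pass to the limit using the pointwise convergence $u_n\to u$.

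For $\beta=0$, I would apply \eqref{Ds0} with $f=0$ at each $y\in B_{1/2}(x)$, or directly use the pointwise bound \eqref{eq:hess-pointwise-y} obtained in Step~2 of the proof of Theorem~\ref{grad3}. This gives
\[
\sup_{y\in B_{1/2}(x)}|\nabla^2_x u(t,y)|\lesssim \Big(\frac{\Lambda_1(x)}{\lambda(x)R}\Big)^2\|u\|_{L^\infty(I_R(t)\times B_1(x))},
\]
where $R=\Lambda_1(x)R_0/(\lambda(x)\cH(x))$. The constant depends only on $d,\alpha,c_0$, and \eqref{LL2} is used to compare $\lambda(y),\Lambda(y),\cH(y)$ with their values at $x$. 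Substituting $R$ gives $\Lambda_1/(\lambda R)=\cH(x)/R_0=\cH(x)(\sqrt{T-S}\vee1)/\sqrt{T-t}$. Since $R\le R_0\le\sqrt{T-t}$, we have $I_R(t)\subset(S,T)$ and hence $\|u\|_{L^\infty(I_R(t)\times B_1(x))}\le\sup_{s\in[S,T]}\|\mathcal{T}_{s,T}\varphi\|_{L^\infty(B_1(x))}$. This is the $\beta=0$ claim.

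For $\beta=\alpha$, I would use \eqref{Da3} with $f=0$, which reads
\[
[\nabla^2_xu(t,\cdot)]_{\bC^\alpha(B_{1/2}(x))}\lesssim\Big(\frac{\Lambda_1(x)}{\lambda(x)R}\Big)^{2+\alpha}\|u\|_{L^\infty(I_R(t)\times B_1(x))}.
\]
The same substitution for $\Lambda_1/(\lambda R)$ and the same containment of $I_R(t)$ then give the bound with exponent $2+\alpha$.

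The real content therefore reduces to two points. The first is justifying that the probabilistic solution has the $\bC^{2+\alpha}_{loc}$ regularity required by Theorem~\ref{grad3}; this is the step I expect to be the main, if mild, obstacle. The second is checking that $\cH$ in \eqref{HH2} uses norms over $[S,T]\times B_1(x)$, so that no cylinder-dependent quantities remain in the final bound.
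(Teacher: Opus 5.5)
Your proposal is correct and follows the paper's route. The paper proves this statement only by the remark ``combining Remark~\ref{Re45} and Theorem~\ref{grad3}'', and your plan fills that in exactly as in the proof of Theorem~\ref{Cor43}: use \eqref{eq:hess-pointwise-y} for $\beta=0$ and \eqref{Da3} for $\beta=\alpha$, then substitute $\Lambda_1/(\lambda R)=\cH(x)(\sqrt{T-S}\vee1)/\sqrt{T-t}$. Of your two options for $\beta=0$, keep \eqref{eq:hess-pointwise-y}. Applying \eqref{Ds0} at each $y$ produces $\cH(y)$, which involves H\"older norms of $a,b$ on $B_1(y)\not\subset B_1(x)$, and \eqref{LL2} only compares $\lambda,\Lambda$, not those norms.
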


\subsection{Long time derivative estimates for time-homogeneous SDEs}

We now specialize to the time-homogeneous SDE \eqref{eq:sde}, namely,
\[
\dif X_{t}(x)=b\big(X_{t}(x)\big)\dif t+\sqrt{2}\,\sigma\big(X_{t}(x)\big)\dif W_{t},\qquad X_{0}(x)=x.
\]
Recall that the semigroup $\cT_t\varphi$ is defined by \eqref{Semi}.
Time-homogeneity allows us to combine the short-time regularization of $\cT_t\varphi$
with the ergodic behavior of $X_t(x)$ to obtain long-time derivative bounds.

We first record the short-time pointwise gradient and Hessian bounds for $\cT_t\varphi$.

\begin{theorem}\label{grad2}
Assume that {\bf (H$^\sigma_{b}$)} holds, and let $\varphi\in \mathcal{B}_{\rho_0}$.
\begin{enumerate}[(i)]
\item \textbf{(Short-time gradient estimate)}
For any $\eps\in(0,1)$ there exists a constant $C=C(\eps,d,p_b,\alpha)>0$ such that for all $(t,x)\in\mathbb{R}_+\times \mathbb{R}^{d}$,
\begin{align}\label{Hes0}
\big|\nabla_x\mathcal{T}_t\varphi(x)\big|
\lesssim_C \frac{ \Gamma_t(x) }{\sqrt{t\wedge 1}}  \Lambda_1^{\eps}(x)
\left(\frac{\Lambda_{1}(x)}{\lambda(x)}\right)^{3\eps+d/p_b}
\|\mathcal{T}_{\cdot}\varphi\|_{L^\infty([0,2t]\times B_1(x))},
\end{align}
where $\Lambda_1(x):=\Lambda(x)+1$ and $\Gamma_t(x)$ is defined by \eqref{s1}.

\item \textbf{(Short-time Hessian estimate)}
If in addition $b\in \mathbf{C}^\alpha_{\mathrm{loc}}(\mathbb{R}^d)$ and \eqref{LL2} holds, then for $\beta\in\{0,\alpha\}$ there exists a constant $C=C(d,\alpha)>0$ such that for all $(t,x)\in\mathbb{R}_+\times \mathbb{R}^{d}$,
\begin{align}\label{Hes1}
\big[\nabla^2_x\mathcal{T}_t \varphi\big]_{\bC^\beta(B_{1/2}(x))}
\lesssim_C \left(\frac{\widetilde\Gamma_t(x)}{\sqrt{t\wedge 1}}\right)^{2+\beta}
\|\mathcal{T}_{\cdot}\varphi\|_{L^\infty([0,2t]\times B_1(x))},
\end{align}
where $\widetilde\Gamma_t(x)$ is defined by \eqref{s2}.
\end{enumerate}
\end{theorem}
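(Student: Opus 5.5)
The plan is to reduce Theorem~\ref{grad2} to the time-inhomogeneous estimates of Theorems~\ref{Cor43} and~\ref{Cor44}, applied to the backward Kolmogorov equation on a time window of length $2t$.

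Fix $t>0$ and set $S:=0$, $T:=2t$. By time-homogeneity, $u(s,y):=\cT_{T-s}\varphi(y)$ equals $\cT_{s,T}\varphi(y)$ for the SDE with time-independent coefficients. In particular $u(t,x)=\cT_t\varphi(x)$, and $(t,x)$ lies in $[\frac{T+S}{2},T)\times\mathbb{R}^d$. Hypothesis {\bf (H$^\sigma_b$)} gives $({\bf \widetilde H}^\sigma_b)$ on $[0,2t]$ with $q_b=\infty$, since $b$ is time independent. The moment bound \eqref{Lya1} holds with $\rho_1$ replaced by $\ell_0(2t)\rho_1$, because $\ell_0$ is increasing. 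Moreover $\sup_{s\in[0,T]}\|\cT_{s,T}\varphi\|_{L^\infty(B_1(x))}=\|\cT_\cdot\varphi\|_{L^\infty([0,2t]\times B_1(x))}$.

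With these identifications, the prefactor $(\sqrt{T-S}\vee1)/\sqrt{T-t}$ becomes $(\sqrt{2t}\vee1)/\sqrt t\lesssim 1/\sqrt{t\wedge1}$. For the gradient estimate, apply Theorem~\ref{Cor43} with $q_b=\infty$, so $\theta_b=1-d/p_b$. Given $\eps$, choose $q$ large and $p<p_b$ close to $p_b$ so that $1/q\le\eps$ and $d/p+2/q\le d/p_b+3\eps$; this also ensures $d/p+2/q<1$ when $\eps$ is small. Then $\Lambda_1^{1/q}(\Lambda_1/\lambda)^{d/p+2/q}\le \Lambda_1^{\eps}(\Lambda_1/\lambda)^{3\eps+d/p_b}$. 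If $p_b=\infty$, take $p$ large instead.

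It remains to compare $\cG_t(x)$ in \eqref{HH1} with $\Gamma_t(x)$ in \eqref{s1}. This is the main bookkeeping step. Here $\sqrt{T-S}\wedge1=\sqrt{2t}\wedge1\simeq\sqrt{t\wedge1}$ and $R_0\le1$, so $Q_{R_0}(t,x)$ projects into $B_1(x)$. Hence $[a]_{L^\infty\bC^\alpha(Q_{R_0})}\le[a]_{\bC^\alpha(B_1(x))}$. For the drift term, the time integral collapses: $\|b\|_{\mL^\infty_{p_b}(Q_{R_0})}\le\|b\|_{L^{p_b}(B_1(x))}$. The factor $\Lambda_1^{1/q_b}$ equals $1$, so the drift term matches $\|b\|^{1/\theta_b}_{L^{p_b}(B_1(x))}/\lambda^{1/\theta_b}$. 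This gives $\cG_t\lesssim\Gamma_t$ and proves \eqref{Hes0}.

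For \eqref{Hes1}, apply Theorem~\ref{Cor44} on the same window. Remark~\ref{Re45} ensures that $u$ is $\bC^{2+\alpha}_{loc}$, which that theorem needs. The quantity $\cH(x)$ in \eqref{HH2} already uses $B_1(x)$-norms uniformly in time, and for time-independent coefficients these coincide with the norms in $\widetilde\Gamma_t(x)$ of \eqref{s2}. The same prefactor comparison then gives $[(\sqrt{T-S}\vee1)\cH/\sqrt{T-t}]^{2+\beta}\lesssim(\widetilde\Gamma_t/\sqrt{t\wedge1})^{2+\beta}$.

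The only genuinely delicate points are the following:
\begin{itemize}
\item checking that the hypotheses of Theorems~\ref{grad22} and~\ref{Cor43} are met, namely the $L^{p_b}_{loc}$ drift with $q_b=\infty$ and the requirement $q\le q_b$;
\item handling the case where $\eps$ is not small, for which the monotonicity of the right-hand side in $\eps$ suffices, since $\Lambda_1/\lambda\ge1$ and $\Lambda_1\ge1$;
\item choosing $(p,q)$ depending only on $(\eps,d,p_b)$, so that the constant has the stated dependence.
\end{itemize}
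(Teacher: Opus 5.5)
Your proposal is correct and is essentially the paper's argument: apply Theorems~\ref{Cor43} and~\ref{Cor44} on the window $S=0$, $T=2t$ with $q_b=\infty$, and choose $(p,q)$ so that $1/q\le\eps$ and $d/p+2/q\le d/p_b+3\eps$ (the paper takes $q=1/\eps$ and $d/p=d/p_b+\eps$). Your explicit comparison of $\cG_t(x)$ with $\Gamma_t(x)$ is sound, and so is your monotonicity-in-$\eps$ reduction for large $\eps$; the latter is needed because Theorem~\ref{Cor43} requires $d/p+2/q<1$, which the paper's choice violates when $3\eps\geq 1-d/p_b$, so your version is slightly more careful than the paper's proof.
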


\begin{proof}
Suppose that $b\in L^{p_b}_{\mathrm{loc}}(\mathbb{R}^d)$ for some $p_b\in(d,\infty]$.
We apply Theorems~\ref{Cor43}--\ref{Cor44} with $S=0$ and $T=2t$.
Then
\[
R_0=\frac{\sqrt{T-t}}{\sqrt{(T-S)\vee1}}
=\frac{\sqrt{t}}{\sqrt{2t\vee1}}=\sqrt{t\wedge\tfrac12}.
\]
Since $b$ is time-independent, we have $b \in L^\infty(\mathbb{R}; L^{p_b}_{\mathrm{loc}}(\mathbb{R}^d))$, which allows us to set $q_b=\infty$.
To obtain the exponent $\frac d {p_b}+3\eps$,
we choose
$q=\tfrac1\eps$ and $\tfrac{d}{p}=\tfrac{d}{p_b}+\eps$ in Theorems~\ref{Cor43}
so that $\frac{d}{p}+\frac{2}{q}=\frac{d}{p_b}+3\eps$.
With this choice, Theorem~\ref{Cor43} yields \eqref{Hes0}
since $b\in L^{p_b}_{\mathrm{loc}}(\mathbb{R}^d)$ in {\bf (H$^\sigma_{b}$)}.
The Hessian bound \eqref{Hes1} follows analogously from Theorem~\ref{Cor44}.
\end{proof}

\begin{remark}
From \eqref{Mom1} we obtain
\begin{align*}
\|\mathcal{T}_{\cdot}\varphi\|_{L^\infty([0,2t]\times B_1(x))}
\leq\ell_0(2t)\,\|\rho_1\|_{L^\infty(B_1(x))}\,\|\varphi\|_{\cB_{\rho_0}}.
\end{align*}
Theorem~\ref{thm:main-intro} follows from Theorem~\ref{grad2}.
\end{remark}

We now prove the long-time derivative bounds in Theorem~\ref{thm:main-intro2}.

\begin{proof}[Proof of Theorem \ref{thm:main-intro2}]
Set $\widetilde \varphi:=\varphi-\mu(\varphi)$. For $t>2$, using the semigroup property and applying \eqref{Hes0} with $t=1$ we obtain
\begin{align*}
\big|\nabla_x \mathcal{T}_t \varphi(x)\big|
&=\big|\nabla_x \mathcal{T}_{t}\widetilde \varphi(x)\big|
=\big|\nabla_x \mathcal{T}_{1}\big(\mathcal{T}_{t-1} \widetilde \varphi\big)(x)\big|\\
&\lesssim \Gamma_1(x)\Lambda_1^{\eps}(x)\left(\frac{\Lambda_{1}(x)}{\lambda(x)}\right)^{3\eps+d/p_b}
\sup_{s\in[0,1]}\big\|\mathcal{T}_{s+t-1}\widetilde \varphi\big\|_{L^{\infty}(B_{1}(x))}.
\end{align*}
By the ergodic estimate \eqref{Erg1}, we have
\[
\|\mathcal{T}_{s+t-1}\widetilde \varphi\|_{L^{\infty}(B_{1}(x))}
\lesssim \ell_1(t-1)\,\|\rho_1\|_{L^\infty(B_{1}(x))}\,\|\varphi\|_{\mathcal{B}_{\rho_0}}.
\]
This yields \eqref{s3}. The Hessian estimate \eqref{s4} follows analogously from \eqref{Hes1}.
\end{proof}

In the short-time estimate \eqref{Hes1}, the singularity near $t=0$ is non-integrable.
A natural question is whether, for H\"older terminal data, one can improve the time singularity.
To this end, we introduce the following class of weights.

Let $\sW$ denote the class of functions $\rho:\mathbb{R}^d\to(0,\infty)$ such that there exist constant
$c_0\in(0,1]$ with
\begin{align}\label{Dt1}
c_0\rho(x)\leq
\inf_{y\in B_1(x)}\rho(y)\leq
\sup_{y\in B_1(x)}\rho(y)\leq c_0^{-1}\rho(x).
\end{align}
For any $\rho_1,\rho_2\in\sW$ and $\gamma\in\mathbb{R}$, it is straightforward that
\[
\rho_1+\rho_2,\quad \rho_1\rho_2,\quad \rho_1/\rho_2,\quad \rho_1^\gamma\in\sW.
\]
Given $\rho\in\sW$ and $\alpha\geq 0$, define the weighted H\"older space
\[
\bC_\rho^\alpha(\mathbb{R}^d):=\left\{f\in \bC_{loc}^\alpha(\mathbb{R}^d):\
\|f\|_{\bC^\alpha_\rho}:=
\sup_{x\in\mathbb{R}^d}\frac{\|f\|_{\bC^\alpha(B_1(x))}}{\rho(x)}<\infty\right\}.
\]
For $\alpha\in(0,1]$, we have
\[
\|f\|_{\bC^\alpha_\rho}\asymp
\sup_{x\in\mathbb{R}^d}\frac{|f(x)|}{\rho(x)}+[f]_{\bC^\alpha_\rho},
\qquad
[f]_{\bC^\alpha_\rho}:=\sup_{x\in\mathbb{R}^d}\sup_{|y|\leq1}\frac{|f(x+y)-f(x)|}{\rho(x)|y|^\alpha}.
\]

\begin{lemma}\label{Le32}
Let $\beta\in[0,1]$ and $\varphi\in\bC^\beta_\rho(\mathbb{R}^d)$.
Let $\varphi_\eps:=\varphi*\varrho_\eps$, where $\varrho_\eps(x)=\eps^{-d}\varrho(x/\eps)$ and
$\varrho\in C^\infty_c(B_1(0))$ is a smooth probability density.
Then for any $\alpha\in[0,1]$ there exists $C=C(d,\beta,\alpha,\varrho)>0$ such that for all $\eps\in(0,1)$,
\[
\|\varphi_\eps-\varphi\|_{\bC^0_\rho}\lesssim_C\eps^\beta[\varphi]_{\bC^\beta_\rho},
\qquad
\|\varphi_\eps\|_{\bC^{2+\alpha}_\rho}\lesssim_C\eps^{\beta-2-\alpha}\|\varphi\|_{\bC^\beta_\rho}.
\]
\end{lemma}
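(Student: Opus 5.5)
The plan is to prove both estimates pointwise on unit balls. Everything rests on the fact that the mollifier $\varrho_\eps$ has support in $B_\eps(0)\subset B_1(0)$ and that $\rho\in\sW$ varies by at most the factor $c_0^{-1}$ on unit balls. We fix $x\in\mathbb{R}^d$ throughout and work on $B_1(x)$. Every point we touch lies within distance $2$ of $x$, so all weights $\rho(z)$ that appear are comparable to $\rho(x)$ up to a constant of order $c_0^{-2}$. This constant is absorbed into $C$; the dependence on $c_0$ is implicit in the stated dependence.

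For the first estimate, take $y\in B_1(x)$ and write
\[
\varphi_\eps(y)-\varphi(y)=\int_{B_\eps(0)}\big(\varphi(y-z)-\varphi(y)\big)\varrho_\eps(z)\,\dif z .
\]
Since $|z|\leq \eps<1$, the weighted seminorm bounds the integrand by $\rho(y)|z|^\beta[\varphi]_{\bC^\beta_\rho}$, and $|z|^\beta\leq\eps^\beta$. This gives $|\varphi_\eps(y)-\varphi(y)|\leq \eps^\beta\rho(y)[\varphi]_{\bC^\beta_\rho}\lesssim \eps^\beta\rho(x)[\varphi]_{\bC^\beta_\rho}$. Taking the supremum over $y\in B_1(x)$, dividing by $\rho(x)$, and then taking the supremum over $x$ yields the first estimate. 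When $\beta=0$ the seminorm is read as the sup-norm term; the same argument works, using $|\varphi(y-z)|+|\varphi(y)|\lesssim\rho(x)\|\varphi\|_{\bC^0_\rho}$.

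For the second estimate, we bound each term of $\|\varphi_\eps\|_{\bC^{2+\alpha}(B_1(x))}$ by $\eps^{\beta-2-\alpha}\rho(x)\|\varphi\|_{\bC^\beta_\rho}$.

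The first step handles the sup-norm: $\|\varphi_\eps\|_{L^\infty(B_1(x))}\lesssim\rho(x)\|\varphi\|_{\bC^0_\rho}$, since $\varrho_\eps$ is a probability density. This is bounded by the target because $\eps<1$.

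The second step handles derivatives of order $k=1,2$. Since $\int\nabla^k\varrho_\eps=0$, we can subtract a constant:
\[
\nabla^k\varphi_\eps(y)=\int_{B_\eps(0)}\big(\varphi(y-z)-\varphi(y)\big)\nabla^k\varrho_\eps(z)\,\dif z .
\]
Using $\|\nabla^k\varrho_\eps\|_{L^1}\lesssim\eps^{-k}$ together with the Hölder bound from above, we get $|\nabla^k\varphi_\eps(y)|\lesssim\eps^{\beta-k}\rho(x)\|\varphi\|_{\bC^\beta_\rho}$.

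The third step handles the Hölder seminorm $[\nabla^2\varphi_\eps]_{\bC^\alpha(B_1(x))}$. For $y,y'\in B_1(x)$ with $|y-y'|\geq\eps$, we bound the difference quotient by $2\eps^{-\alpha}\sup|\nabla^2\varphi_\eps|\lesssim\eps^{\beta-2-\alpha}\rho(x)\|\varphi\|_{\bC^\beta_\rho}$. For $|y-y'|<\eps$, we use the mean value theorem together with the $k=3$ version of the second step, namely $|\nabla^3\varphi_\eps|\lesssim\eps^{\beta-3}\rho(x)\|\varphi\|_{\bC^\beta_\rho}$ on $B_2(x)$. This gives $|\nabla^2\varphi_\eps(y)-\nabla^2\varphi_\eps(y')|\lesssim\eps^{\beta-3}|y-y'|$, and $|y-y'|^{1-\alpha}\leq\eps^{1-\alpha}$ then produces the same bound.

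The final step sums the contributions. The lower-order terms carry powers $\eps^{\beta-k}\leq\eps^{\beta-2-\alpha}$ because $\eps<1$, so dividing by $\rho(x)$ and taking the supremum over $x$ finishes the proof.

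No step is a real obstacle. The only point needing care is bookkeeping the weight comparability. The segment between $y$ and $y'$, and all shifted points $y-z$, lie in $B_2(x)$; this is covered by applying \eqref{Dt1} twice.
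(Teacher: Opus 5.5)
Your proof is correct. The paper states Lemma~\ref{Le32} without proof and treats it as standard, and you give exactly the standard mollification argument: the cancellation $\int\nabla^k\varrho_\eps=0$, the scaling $\|\nabla^k\varrho_\eps\|_{L^1}=\eps^{-k}\|\nabla^k\varrho\|_{L^1}$, and the split at $|y-y'|=\eps$ for the H\"older seminorm. You are also right that $C$ depends on the comparability constant $c_0$ of $\rho\in\sW$, which the statement leaves implicit. As a cosmetic point, the segment $[y,y']$ already lies in $B_1(x)$ by convexity, so a single application of \eqref{Dt1} suffices there.
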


We can now prove the following improved short-time bounds.

\begin{theorem}\label{Th24}
Assume that {\bf (H$^\sigma_b$)} holds with $\rho_0,\rho_1,\lambda,\Lambda\in\sW$.
Let $1\leq\rho_2\in\sW$ and set $\rho:=\rho_0/\rho_2$.
\begin{enumerate}[(i)]
\item Suppose that $\|a\|_{\bC^\alpha(B_1(x))}+|b(x)|\leq\rho_2(x)$ for all $x$.
Then for any $\beta\in[0,1]$ there exists $\rho_3\in\sW$ such that for all $\varphi\in\bC^\beta_\rho(\mathbb{R}^d)$ and $t\in(0,1)$,
\begin{align}\label{Gf}
[\cT_t\varphi]_{\bC^\beta_{\rho_3}}\leq [\varphi]_{\bC^\beta_\rho}.
\end{align}

\item Suppose that $\|a\|_{\bC^\alpha(B_1(x))}+\|b\|_{\bC^\alpha(B_1(x))}\leq\rho_2(x)$ for all $x$.
Then for every $\beta\in[0,1]$ there exists $\rho_3\in\sW$ such that for all $\varphi\in\bC^\beta_\rho(\mathbb{R}^d)$ and $t\in(0,1)$,
\begin{align}\label{Gf1}
\|\nabla^2_x\mathcal{T}_t \varphi\|_{\bC^\alpha_{\rho_3}}
\leq t^{-1+(\beta-\alpha)/2}\|\varphi\|_{\bC^\beta_\rho}.
\end{align}
\end{enumerate}
\end{theorem}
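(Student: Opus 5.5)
The natural approach is the standard mollification/interpolation argument (a "$K$-method" in time). For $\varphi\in\bC^\beta_\rho$ and $t\in(0,1)$, split $\varphi=(\varphi-\varphi_\eps)+\varphi_\eps$ with $\varphi_\eps$ as in Lemma~\ref{Le32}, and take $\eps=\sqrt t$. The rough part $\varphi-\varphi_\eps$ is small in $\bC^0_\rho$, of size $\eps^\beta[\varphi]_{\bC^\beta_\rho}$. We propagate it with the short-time derivative bounds of Theorem~\ref{grad2}, which cost $t^{-1/2}$ (gradient) and $t^{-(2+\alpha)/2}$ (second derivatives in $\bC^\alpha$). The smooth part $\varphi_\eps$ lies in $\bC^{2+\alpha}_\rho$ with norm at most $\eps^{\beta-2-\alpha}\|\varphi\|_{\bC^\beta_\rho}$. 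We propagate it with estimates that lose no time singularity, obtained from the Kolmogorov equation with source: the derivatives of $\varphi_\eps$ are moved into the source term via $u=\cT_{t}\varphi_\eps$. Throughout, every local quantity is converted to weights in $\sW$:
\begin{itemize}
\item $\lambda,\Lambda,\rho_0,\rho_1,\rho_2\in\sW$;
\item $\|a\|_{\bC^\alpha(B_1(x))}$ and $|b(x)|$ (resp.\ $\|b\|_{\bC^\alpha(B_1(x))}$) are at most $\rho_2$;
\item $\Gamma_1,\widetilde\Gamma_1$ are bounded by a product of powers of these weights, hence lie in $\sW$;
\item $\|\rho_1\|_{L^\infty(B_1(x))}\le c_0^{-1}\rho_1(x)$.
\end{itemize}
The final $\rho_3$ is a suitable product of powers of these weights, chosen large enough to absorb all constants; this is how we obtain the constant $1$ in \eqref{Gf} and \eqref{Gf1}.

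For (i), the cases $\beta=0$ and $\beta=1$ suffice, with a direct treatment of intermediate $\beta$ using $\eps=|y|$. Take $\beta=0$ first. Then $|\cT_t\varphi(x)|\le \ell_0(1)\rho_1(x)\|\varphi\|_{\cB_{\rho_0}}$, and since $\rho_0=\rho\rho_2\le\rho\,\rho_2$ we get $\|\varphi\|_{\cB_{\rho_0}}\lesssim \|\varphi\|_{\bC^0_\rho}$ after enlarging by $\rho_2$. Hence $\rho_3\gtrsim \rho_1$ works. Note that in this step $\rho$ is the weight on $\varphi$, so the loss is only through $\rho_1$.

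For $\beta>0$, fix $|y|\le1$ and write
\[
\cT_t\varphi(x+y)-\cT_t\varphi(x)=\cT_t(\varphi-\varphi_\eps)\big|_x^{x+y}+\cT_t\varphi_\eps\big|_x^{x+y}, \qquad \eps=|y|.
\]
The first term is at most $2\ell_0\sup_{B_1(x)}\rho_1\cdot\eps^\beta[\varphi]\rho_2$. For the second term we need $|\nabla\cT_t\varphi_\eps|\lesssim \eps^{\beta-1}$ uniformly in $t$. This is obtained by differentiating along the flow heuristically, or rigorously by writing
\[
\cT_t\varphi_\eps=\varphi_\eps+\int_0^t\cT_s\sL\varphi_\eps\,ds
\]
and applying Theorem~\ref{Cor43}/\eqref{grad0} with source $f=\sL\varphi_\eps$ and terminal data $\varphi_\eps$. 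This is the step where $|b|\le\rho_2$ enters: $|\sL\varphi_\eps|\lesssim\rho_2(\|\nabla^2\varphi_\eps\|+\|\nabla\varphi_\eps\|)\rho\lesssim \eps^{\beta-2}\rho_0$.

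For (ii), we use \eqref{Hes1} with $\beta=\alpha$ on $\varphi-\varphi_\eps$, which gives
\[
t^{-1-\alpha/2}\,\eps^{\beta}\,\widetilde\Gamma_1^{2+\alpha}\rho_1\rho_2\,[\varphi]_{\bC^\beta_\rho}.
\]
For $\varphi_\eps$ we need
\[
\|\nabla^2\cT_t\varphi_\eps\|_{\bC^\alpha(B_{1/2}(x))}\lesssim(\text{weights})\,\|\varphi_\eps\|_{\bC^{2+\alpha}_\rho}.
\]
To get this, set $w(s):=\cT_{t-s}\varphi_\eps-\varphi_\eps$ on $[0,t]$. It solves \eqref{pde31} backward with zero terminal value and source $f=\sL\varphi_\eps\in\bC^\alpha$ locally, with norm $\lesssim\rho_2\,\|\varphi_\eps\|_{\bC^{2+\alpha}_\rho}\rho$. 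This requires $b\in\bC^\alpha$, which is why (ii) assumes it. Then apply the Schauder theory of Theorem~\ref{grad3} with $f$ on a cylinder reaching the terminal time. The zero terminal value permits an extension by zero past $T$, so there is no $R_0$ degeneration, and we obtain $[\nabla^2 w]_\alpha\lesssim \lambda^{-1}\|f\|_{\bC^\alpha}+(\text{weights})\|w\|_\infty$. With $\eps=\sqrt t$, both pieces are $\lesssim t^{-1+(\beta-\alpha)/2}$. The lower-order parts of the $\bC^\alpha_{\rho_3}$ norm (sup of $\nabla^2$) are handled the same way using \eqref{Hes1} with $\beta=0$.

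The main obstacle is the uniform-up-to-terminal-time estimate for $w$, since Theorems~\ref{grad1}/\ref{grad3} are stated only for $t\le\frac{S+T}{2}$-type interior cylinders of size $R_0\sim\sqrt{T-t}$. I would handle this by extending $w$ by $0$ beyond time $t$, which is legitimate because $w(t)=0$ and $f$ is simply cut off, so that the extension remains a strong solution of \eqref{pde31} on a longer interval with source $f\mathbf 1_{[0,t]}$. The Schauder estimate needs H\"older regularity only in $x$ (it is $L^\infty$ in time), so the time discontinuity of the source is harmless, and Theorem~\ref{Schau} then applies on full-size cylinders of radius $\asymp R$ determined by $\widetilde\Gamma_1$.
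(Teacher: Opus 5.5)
Your overall architecture is the paper's: mollify, move the derivatives of $\varphi_\eps$ into a source through $\cT_t\varphi_\eps-\varphi_\eps=\int_0^t\cT_s\sL\varphi_\eps\,\dif s$, apply the local gradient/Schauder estimates, and absorb all constants into weights from $\sW$. The gap is in (i): the key step does not work as you justify it. With $\eps=|y|$, the Duhamel formula plus \eqref{grad0} on $[0,2t]$ gives at best \eqref{Dq14} for $\varphi_\eps$. Here the radius is $R\asymp\sqrt t$ and $\sup_{s\le 2t}|\cT_s\varphi_\eps-\varphi_\eps|\lesssim t\,\|\sL\varphi_\eps\|$. Up to weights this reads $|\nabla\cT_t\varphi_\eps|\lesssim\|\nabla\varphi_\eps\|_{\cB_\rho}+\sqrt t\,\|\nabla\varphi_\eps\|_{\bC^1_\rho}\lesssim |y|^{\beta-1}+\sqrt t\,|y|^{\beta-2}$. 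The second term is not uniform in $t$, and the resulting increment bound $|y|^\beta+\sqrt t\,|y|^{\beta-1}$ is not $O(|y|^\beta)$ when $|y|\ll\sqrt t$.

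What is missing is the $\beta=1$ estimate you say ``suffices'' but never prove. For $\psi\in\bC^1_\rho$ you must mollify at the \emph{time} scale $\sqrt t$:
\begin{itemize}
\item bound $\nabla\cT_t(\psi-\psi_{\sqrt t})$ by \eqref{Hes0}, at cost $t^{-1/2}\cdot\sqrt t$;
\item bound $\nabla\cT_t\psi_{\sqrt t}$ by \eqref{Dq14}, at cost $1+\sqrt t\cdot t^{-1/2}$.
\end{itemize}
This gives $\|\nabla\cT_t\psi\|_{\cB_{\rho_3}}\le\|\nabla\psi\|_{\cB_\rho}$ for all $t\in(0,1)$. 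Only then apply it to $\psi=\varphi_{|y|}$, using $\|\nabla\varphi_{|y|}\|_{\cB_\rho}\lesssim|y|^{\beta-1}[\varphi]_{\bC^\beta_\rho}$. This two-scale structure is Steps~2--3 of the paper's proof. Alternatively, take $\eps=|y|\vee\sqrt t$ and treat the rough part with \eqref{Hes0} when $|y|<\sqrt t$. Note also that your separate $\beta=0$ argument yields $\|\varphi\|_{\cB_\rho}$ rather than the seminorm $[\varphi]_{\bC^0_\rho}$ demanded by \eqref{Gf}. The same $\eps=|y|$ decomposition handles $\beta=0$ once the gradient bound is in place.

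In (ii) your treatment of the smooth part departs from the paper, and the ``main obstacle'' you identify is not one. The paper applies Theorems~\ref{grad22} and \ref{grad3} to $\cT_{2t-s}\varphi_\eps-\varphi_\eps$ with source $\sL\varphi_\eps$, on $[0,2t]$ at the midpoint, with cylinders of radius $R\asymp\sqrt t$. This function is $O(t\,\rho_1\|\sL\varphi_\eps\|_{\cB_{\rho_0}})$ on $[0,2t]\times B_1(x)$, so the factor $R^{-2-\alpha}$ costs only $t^{-\alpha/2}$, which gives \eqref{Dq4}. At $\eps=\sqrt t$ the term $t^{-\alpha/2}\|\nabla\varphi_\eps\|_{\bC^1_\rho}\lesssim t^{-\alpha/2}\eps^{\beta-2}$ is already $t^{-1+(\beta-\alpha)/2}$, and the source term, $\lesssim\eps^{\beta-2-\alpha}$, is of the same order.

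Your zero extension past the terminal time would remove this harmless $t^{-\alpha/2}$, but it needs an input the paper does not provide. Theorem~\ref{Schau} is an a priori estimate presupposing $w\in L^\infty_{\mathrm{loc}}(\bC^{2+\alpha}_{\mathrm{loc}})$ on a time interval containing the terminal time, i.e.\ $\bC^{2+\alpha}$ regularity up to $s=t$. Remarks~\ref{Re42} and \ref{Re45} are interior statements, so you would have to import this from classical Schauder theory with smooth terminal data or from an approximation argument. The detour can likely be made rigorous, but it adds an input without improving the rate. The midpoint argument you already invoke in (i) suffices.
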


\begin{proof}
\textbf{(i) Step 1. A basic estimate for smooth $\varphi$.}
Let $\varphi\in\bC^2_\rho(\mathbb{R}^d)$.
By the assumption $\|a\|_{\bC^\alpha(B_1(x))}+|b(x)|\leq\rho_2(x)$ and the definition $\rho=\rho_0/\rho_2$, we have
\begin{align}\label{Dq31}
\|\sL\varphi\|_{\cB_{\rho_0}}
\lesssim \|\nabla^2\varphi\|_{\cB_\rho}+\|\nabla\varphi\|_{\cB_\rho}
\leq \|\nabla\varphi\|_{\bC^1_\rho}.
\end{align}
Applying It\^o's formula to $\varphi(X_t(x))$ gives
\begin{align}\label{Dq3}
\cT_{t}\varphi(x)-\varphi(x)=\int^t_0\cT_{s}\sL \varphi(x)\dif s=:u(t,x).
\end{align}
As in the proof of Theorem~\ref{grad2}, combining Theorems~\ref{grad22} and \ref{grad1} yields that there exists $\rho_4\in\sW$ such that for all $t\in(0,1)$,
\begin{align*}
|\nabla_x u(t,x)|
&\lesssim \frac{\rho_4(x)}{\sqrt{t}}\|u\|_{L^\infty([0,2t]\times B_1(x))}
+\sqrt{t}\,\|\sL\varphi\|_{L^\infty(B_1(x))}.
\end{align*}
Moreover, by \eqref{Mom1} and \eqref{Dt1},
\begin{align}\label{Dq33}
\|u\|_{L^\infty([0,2t]\times B_1(x))}
\leq \int^{2t}_0\|\cT_{s}\sL \varphi\|_{L^\infty(B_1(x))}\dif s
\lesssim t\,\rho_1(x)\,\|\sL\varphi\|_{\cB_{\rho_0}}.
\end{align}
Combining \eqref{Dq31}--\eqref{Dq33}, we find $\rho_5\in\sW$ such that for all $t\in(0,1)$,
\[
|\nabla_x u(t,x)|\leq \sqrt{t}\,\rho_5(x)\,\|\nabla\varphi\|_{\bC^1_\rho}.
\]
Inserting this into \eqref{Dq3} yields
\begin{align}\label{Dq14}
|\nabla_x \cT_t\varphi(x)|
\leq \rho(x)\|\nabla\varphi\|_{\bC^0_\rho}
+\sqrt{t}\,\rho_5(x)\|\nabla\varphi\|_{\bC^1_\rho}.
\end{align}

\textbf{Step 2. Extend to $\varphi\in\bC^1_\rho$ by mollification.}
Fix $\varphi\in\bC^1_\rho(\mathbb{R}^d)$ and let $\varphi_\eps$ be as in Lemma~\ref{Le32}.
Using \eqref{Dq14} for $\varphi_\eps$ and the short-time gradient bound \eqref{Hes0} for $\varphi-\varphi_\eps$, there exists $\rho_6\in\sW$ such that for all $\eps\in(0,1)$ and $t\in(0,1)$,
\begin{align*}
|\nabla_x\cT_t \varphi(x)|
&\leq|\nabla_x\cT_t(\varphi-\varphi_\eps)(x)|+|\nabla_x\cT_t \varphi_\eps(x)|\\
&\leq t^{-1/2}\rho_6(x)\|\varphi-\varphi_\eps\|_{\cB_\rho}
+\rho(x)\|\nabla\varphi_\eps\|_{\bC^0_\rho}+\sqrt{t}\rho_5(x)\|\nabla\varphi_\eps\|_{\bC^1_\rho}\\
&\lesssim \Big(t^{-1/2}\rho_6(x)\eps
+\rho(x)+\sqrt{t}\rho_5(x)\eps^{-1}\Big)\|\nabla\varphi\|_{\bC^0_\rho},
\end{align*}
where we used Lemma~\ref{Le32} in the last step.
Choosing $\eps=t^{1/2}$ yields $\rho_3\in\sW$ such that
\[
\|\nabla\cT_t\varphi\|_{\cB_{\rho_3}}\leq\|\nabla\varphi\|_{\cB_\rho},\qquad t\in(0,1).
\]

\textbf{Step 3. H\"older seminorm.}
Let $|h|\leq 1$. Using the decomposition
\[
\cT_t\varphi=(\cT_t\varphi-\cT_t\varphi_\eps)+\cT_t\varphi_\eps,
\]
we obtain
\begin{align*}
|\cT_t\varphi(x+h)-\cT_t\varphi(x)|
&\leq |\cT_t\varphi_\eps(x+h)-\cT_t\varphi_\eps(x)|
+|\cT_t(\varphi-\varphi_\eps)(x+h)|+|\cT_t(\varphi-\varphi_\eps)(x)|.
\end{align*}
By the gradient bound on $\cT_t\varphi_\eps$ and Lemma~\ref{Le32},
\[
|\cT_t\varphi_\eps(x+h)-\cT_t\varphi_\eps(x)|
\lesssim |h|\,\rho_3(x)\,\|\nabla\varphi_\eps\|_{\cB_\rho}
\lesssim |h|\,\rho_3(x)\,\eps^{\beta-1}[\varphi]_{\bC^\beta_\rho}.
\]
Also, by \eqref{Mom1} and Lemma~\ref{Le32},
\[
|\cT_t(\varphi-\varphi_\eps)(x)|\lesssim \rho_1(x)\|\varphi-\varphi_\eps\|_{\cB_{\rho_0}}
\lesssim \rho_1(x)\eps^\beta[\varphi]_{\bC^\beta_\rho}.
\]
Thus,
\[
|\cT_t\varphi(x+h)-\cT_t\varphi(x)|
\lesssim\Big(|h|\,\rho_3(x)\eps^{\beta-1}+\rho_1(x)\eps^\beta\Big)[\varphi]_{\bC^\beta_\rho}.
\]
Choosing $\eps=|h|$ (valid since $|h|\leq1$) yields \eqref{Gf} for a suitable weight in $\sW$.

\vspace{1mm}
\textbf{(ii)} We proceed similarly, using the short-time Hessian bound and mollification.
By the Hessian estimate \eqref{Hes1} and \eqref{Dt1}, there exists $\rho_4\in\sW$ such that for all $t\in(0,1)$ and $\psi\in\cB_{\rho_0}$,
\begin{align}\label{Dq2}
\|\nabla^2_x\mathcal{T}_t \psi\|_{\bC^\alpha(B_{1/2}(x))}
\leq t^{-1-\alpha/2}\rho_4(x)\|\psi\|_{\cB_{\rho_0}}.
\end{align}
Let $\varphi\in\bC^2_\rho$ and define $u$ by \eqref{Dq3}.
As above, combining Theorems~\ref{grad22} and \ref{grad3} yields that for all $t\in(0,1)$,
\begin{align*}
\|\nabla^2_x u(t)\|_{\bC^\alpha(B_{1/2}(x))}
&\lesssim \left[\frac{\Lambda_1(x)\widetilde\Gamma(x)}{\lambda(x)\sqrt{t}}\right]^{2+\alpha}
\|u\|_{L^\infty([0,2t]\times B_1(x))}\\
&+\frac{\widetilde\Gamma(x)^\alpha\|\sL\varphi\|_{L^\infty(B_1(x))}}{\lambda(x) t^{\alpha/2}}
+\frac{[\sL\varphi]_{\bC^\alpha(B_1(x))}}{\lambda(x)},
\end{align*}
where
\[
\widetilde\Gamma(x):=
\frac{[a]^{1/\alpha}_{\bC^\alpha(B_1(x))}}{\lambda(x)^{1/\alpha}}
+\frac{\|b\|_{\bC^\alpha(B_1(x))}}{\Lambda_1(x)}+1.
\]
By the assumption $\|a\|_{\bC^\alpha(B_1(x))}+\|b\|_{\bC^\alpha(B_1(x))}\leq\rho_2(x)$ and $\rho=\rho_0/\rho_2$,
there exists $\rho_5\in\sW$ such that
\[
\|\sL\varphi\|_{\bC^\alpha(B_1(x))}
\leq \rho_5(x)\,\|\nabla\varphi\|_{\bC^{1+\alpha}_\rho}.
\]
Using \eqref{Dq31}--\eqref{Dq33}, we find $\rho_6,\rho_7\in\sW$ such that for all $t\in(0,1)$,
\[
\|\nabla^2_x u(t)\|_{\bC^\alpha(B_{1/2}(x))}
\leq t^{-\alpha/2}\rho_6(x)\|\nabla\varphi\|_{\bC^1_\rho}
+\rho_7(x)\|\nabla\varphi\|_{\bC^{1+\alpha}_\rho}.
\]
Inserting this into \eqref{Dq3}, we obtain $\rho_8\in\sW$ such that for all $t\in(0,1)$,
\begin{align}\label{Dq4}
\|\nabla^2_x\mathcal{T}_t \varphi\|_{\bC^\alpha(B_{1/2}(x))}
\leq t^{-\alpha/2}\rho_6(x)\|\nabla\varphi\|_{\bC^1_\rho}
+\rho_8(x)\|\nabla\varphi\|_{\bC^{1+\alpha}_\rho}.
\end{align}
Now let $\varphi\in\bC^\beta_\rho(\mathbb{R}^d)$ and let $\varphi_\eps$ be the mollification.
By \eqref{Dq2} and \eqref{Dq4},
\begin{align*}
\|\nabla^2_x\mathcal{T}_t \varphi\|_{\bC^\alpha(B_{1/2}(x))}
&\leq\|\nabla^2_x\mathcal{T}_t(\varphi-\varphi_\eps)\|_{\bC^\alpha(B_{1/2}(x))}
+\|\nabla^2_x\mathcal{T}_t \varphi_\eps\|_{\bC^\alpha(B_{1/2}(x))}\\
&\leq t^{-1-\alpha/2}\rho_4(x)\|\varphi-\varphi_\eps\|_{\cB_\rho}
+t^{-\alpha/2}\rho_6(x)\|\nabla\varphi_\eps\|_{\bC^1_\rho}
+\rho_8(x)\|\nabla\varphi_\eps\|_{\bC^{1+\alpha}_\rho}.
\end{align*}
Using Lemma~\ref{Le32} yields
\[
\|\nabla^2_x\mathcal{T}_t \varphi\|_{\bC^\alpha(B_{1/2}(x))}
\lesssim \Big(t^{-1-\alpha/2}\rho_4(x)\eps^\beta
+t^{-\alpha/2}\rho_6(x)\eps^{\beta-1}
+\rho_8(x)\eps^{\beta-1-\alpha}\Big)\|\varphi\|_{\bC^\beta_\rho}.
\]
Choosing $\eps=t^{1/2}$ gives \eqref{Gf1}.
\end{proof}

As a direct consequence of Theorems~\ref{Th24} and \ref{thm:main-intro2}, we have the following.

\begin{corollary}
Assume that {\bf (H$^\sigma_b$)$'$} holds with $\rho_0,\rho_1,\lambda,\Lambda\in\sW$ and that
for some $\rho_2\in\sW$,
\[
\|a\|_{\bC^\alpha(B_1(x))}+\|b\|_{\bC^\alpha(B_1(x))}\leq\rho_2(x),\qquad x\in\mathbb{R}^d.
\]
Let $\rho:=\rho_0/\rho_2$. For any $\beta\in[0,1]$, there exist $\rho_4,\rho_5\in\sW$ such that for all $t>0$,
\[
[\cT_t\varphi]_{\bC^{\beta}_{\rho_4}}\lesssim \|\varphi\|_{\bC^\beta_{\rho}},
\qquad
\|\nabla^2_x\cT_t\varphi\|_{\bC^\alpha_{\rho_5}}\lesssim
\begin{cases}
t^{-1+(\beta-\alpha)/2}\|\varphi\|_{\bC^\beta_{\rho}}, & t\in(0,2),\\
\ell_1(t-1)\|\varphi\|_{\bC^0_{\rho}}, & t\in[2,\infty).
\end{cases}
\]
\end{corollary}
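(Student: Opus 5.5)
The corollary splices the short-time bounds of Theorem~\ref{Th24} with the long-time decay argument behind Theorem~\ref{thm:main-intro2}, using the semigroup property to pass from one regime to the other. I would treat the two displayed estimates separately and, within each, split into $t\in(0,2)$ and $t\geq 2$.

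\emph{Hölder seminorm.} For $t\in(0,1)$ the bound $[\cT_t\varphi]_{\bC^\beta_{\rho_3}}\leq[\varphi]_{\bC^\beta_\rho}$ is exactly \eqref{Gf}.

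For $t\geq1$ I would write $\cT_t\varphi=\cT_{1/2}(\cT_{t-1/2}\varphi)$. The gradient estimate \eqref{Hes0} at time $1/2$, together with $\Lambda,\lambda\in\sW$ and $\|a\|_{\bC^\alpha(B_1(x))}+\|b\|_{\bC^\alpha(B_1(x))}\leq\rho_2$, bounds $|\nabla\cT_t\varphi(x)|$ by $\rho_6(x)\sup_{s\in[t-1/2,t+1/2]}\|\cT_s\varphi\|_{L^\infty(B_1(x))}$ for some $\rho_6\in\sW$ (polynomial in $\rho_2$, $\Lambda_1$ and $\lambda^{-1}$). To control the supremum I would use \eqref{Erg1} applied to $\widetilde\varphi=\varphi-\mu(\varphi)$ for $s>1$, combined with \eqref{Mom1} on bounded time intervals. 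This gives $\|\cT_s\widetilde\varphi\|_{L^\infty(B_1(x))}\lesssim(\ell_0(2)+\ell_1(1))\rho_1(x)\|\varphi\|_{\cB_{\rho_0}}$ uniformly in $s$.

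Since $\nabla\cT_t\varphi=\nabla\cT_t\widetilde\varphi$, and since $\|\varphi\|_{\cB_{\rho_0}}\leq\|\varphi\|_{\cB_\rho}$ because $\rho_2\geq1$, the gradient is bounded uniformly in $t\geq1$. A gradient bound controls the $\bC^\beta$ seminorm over $|h|\leq1$, so $[\cT_t\varphi]_{\bC^\beta_{\rho_4}}\lesssim\|\varphi\|_{\bC^\beta_\rho}$ with $\rho_4:=\rho_3+c_0^{-1}\rho_6\rho_1$. For $t\in[1,2)$ I would use the same argument with $\ell_0$ only. Notably, the full norm $\|\varphi\|_{\bC^\beta_\rho}$ rather than the seminorm is needed on the right-hand side here.

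\emph{Hessian, $t\in(0,1)$ and $t\in[1,2)$.} For $t\in(0,1)$ the bound is \eqref{Gf1}. For $t\in[1,2)$ I would apply \eqref{Hes1} at time $1/2$ to $\psi=\cT_{t-1/2}\varphi$ and bound $\|\cT_\cdot\varphi\|_{L^\infty}$ through \eqref{Mom1}. This yields a constant multiple of $\rho_7(x)\|\varphi\|_{\cB_{\rho_0}}$, and since $t^{-1+(\beta-\alpha)/2}\asymp1$ on $[1,2)$, it matches the claimed form.

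\emph{Hessian, $t\geq2$.} This is \eqref{s4} with $\beta\in\{0,\alpha\}$. The constant $\widetilde\Gamma_1^{2+\beta}(x)\|\rho_1\|_{L^\infty(B_1(x))}$ is dominated by some $\rho_5\in\sW$, using $\lambda,\Lambda,\rho_1\in\sW$ and the bounds on $a,b$ by $\rho_2$.

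One bookkeeping point needs care. Theorem~\ref{thm:main-intro2} gives control on $B_{1/2}(x)$, while $\bC^\alpha_{\rho_5}$ is defined through $B_1(x)$. I would cover $B_1(x)$ by finitely many balls $B_{1/2}(y)$, say with $|y-x|\leq1$, compare weights at $y$ and $x$ via \eqref{Dt1}, and handle separations greater than $1/2$ using the sup bound (the $\beta=0$ case). Finally, taking $\rho_5$ to be the maximum-type sum of the weights from all three time ranges completes the proof. I expect no real obstacle beyond this weight bookkeeping; the substantive estimates are all imported from Theorems~\ref{Th24}, \ref{grad2} and \ref{thm:main-intro2}.
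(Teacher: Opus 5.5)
Your proposal is correct and follows the route the paper intends. You use \eqref{Gf} and \eqref{Gf1} of Theorem~\ref{Th24} for $t<1$, the short-time bounds \eqref{Hes0}, \eqref{Hes1} on $[1,2)$, and the semigroup-plus-ergodicity mechanism behind \eqref{s3}--\eqref{s4} for large $t$, converting the uniform gradient bound into the $\bC^\beta$ seminorm over $|h|\leq1$ and using $\|\varphi\|_{\cB_{\rho_0}}\leq\|\varphi\|_{\bC^0_\rho}$ (which needs $\rho_2\geq1$, as assumed in Theorem~\ref{Th24}). One simplification: on $[1,2)$ you can apply \eqref{Hes0} and \eqref{Hes1} at time $t$ itself rather than to $\cT_{t-1/2}\varphi$, which spares you from checking that $\cT_{t-1/2}\varphi$ is admissible data in $\cB_{\rho_0}$.
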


\section{Poisson equations on the whole space}

In this section we apply the regularity estimates obtained in Sections~3 and~4
to study solvability and regularity of Poisson equations on the whole space.
More precisely, we consider
\begin{align}\label{eq:poisson}
\sL u(x)=\tr\big(a(x)\cdot\nabla^2_xu(x)\big)+b(x)\cdot\nabla_xu(x)=-f(x),
\qquad x\in\mathbb{R}^{d}.
\end{align}
We impose the following standing assumption on the coefficients.

\begin{enumerate}[{\bf (H$^\sigma_b$)$''$}]
\item Assume that {\bf (H$^\sigma_b$)$'$} holds and, in addition, the function $\ell_1$ appearing in \eqref{Erg1}
satisfies $\ell_1\in L^1([0,\infty))$.
\end{enumerate}

\vspace{1mm}
Recall that $\sL$ is the generator associated with \eqref{eq:sde}, and $\mu$ is the unique invariant probability
measure of the semigroup $(\mathcal T_t)_{t\ge0}$. In particular,
\[
\sL^*\mu=0
\quad\Longleftrightarrow\quad
\int_{\mathbb{R}^d}\sL \psi(x)\,\mu(\dif x)=0
\quad\text{for all suitable test functions }\psi.
\]
Hence a necessary solvability condition for \eqref{eq:poisson} is the centering condition
\begin{align}\label{Center}
\mu(f):=\int_{\mathbb{R}^d} f(x)\,\mu(\dif x)=0.
\end{align}
Throughout this section we assume \eqref{Center}.
We then define the candidate solution
\begin{align}\label{pou}
u(x):=\int_0^\infty \mathcal T_t  f(x)\,\dif t.
\end{align}
Under {\bf (H$^\sigma_b$)$''$} the integral in \eqref{pou} is finite for every $x$ and moreover
\[
|u(x)|
\leq \int_0^\infty |\mathcal T_t  f(x)|\,\dif t
\lesssim \rho_1(x)\|f\|_{\mathcal B_{\rho_0}}
\int_0^\infty \ell_1(t)\,\dif t
\lesssim \rho_1(x)\|f\|_{\mathcal B_{\rho_0}}.
\]
We can now state the first main result of this section.
\begin{theorem}
Assume that {\bf (H$^\sigma_b$)$''$} holds.
Let $f\in\mathcal{B}_{\rho_0}$ satisfy \eqref{Center} and define $u$ by \eqref{pou}.
Then for every $p>d$ we have $u\in \mW^{2}_{p;\mathrm{loc}}(\mathbb{R}^{d})$, and $u$ solves the Poisson equation \eqref{eq:poisson}
in the weak sense.
Moreover, for any $\eps\in(0,1)$ there exists a constant $C=C(d,p_b,\eps,\alpha,\ell_1)>0$ such that for all $x\in\mathbb{R}^d$,
\begin{align}\label{u2}
|\nabla_xu(x)|
\lesssim_{C}\;\Gamma_1(x)
\Lambda_1^{\eps}(x)
\left(\frac{\Lambda_{1}(x)}{\lambda(x)}\right)^{3\eps+d/p_b}
\|\rho_1\|_{L^\infty(B_{1}(x))}\,
\|f\|_{\mathcal{B}_{\rho_0}},
\end{align}
where $\Lambda_1(x):=\Lambda(x)+1$ and $\Gamma_1(x)$ is given by \eqref{s1} with $t=1$.
\end{theorem}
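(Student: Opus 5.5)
The plan is to split the time integral in \eqref{pou} at $t=2$ and treat the two pieces separately:
\[
u=u_1+u_2,\qquad u_1(x):=\int_0^2\mathcal T_tf(x)\,\dif t,\qquad u_2(x):=\int_2^\infty\mathcal T_tf(x)\,\dif t.
\]

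\textbf{Step 1: $u_1$ solves a Kolmogorov equation.} For $u_1$ we use the Kolmogorov equation directly. Set
\[
v(t,x):=\int_t^{2}\mathcal T_{s-t}f(x)\,\dif s=\mathbb E\int_t^2 f(X_{t,s}(x))\,\dif s,\qquad t\in[0,2],
\]
with time-homogeneous coefficients. Then $v(0,\cdot)=u_1$.

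To avoid the initial time lying at the boundary of the interval, we work on $(S,T)=(-2,2)$ and extend the coefficients trivially in time. Theorem \ref{grad22} (with $g=0$ and source $f\in\mathcal B_{\rho_0}$; here \eqref{Lya1} holds with $\ell_0(4)\rho_1$) gives $v\in\mW^{1,2}_{q,p;\mathrm{loc}}$ and shows that $v$ solves $\partial_tv+\sL v+f=0$.

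\textbf{Step 2: gradient bound for $u_1$.} Apply Theorem \ref{grad1} at $t=0$, which lies in $[\frac{T+S}2,T)$. Here $R_0=\sqrt2/2$ and $\cG_0(x)\asymp\Gamma_1(x)$. Use the parameter choice of Theorem \ref{grad2}: $q_b=\infty$, $q=1/\eps$, $d/p=d/p_b+\eps$. Bound the $\mL^q_p$ norms by $R^{1-\theta}$ times sup norms, as in the proof of Theorem \ref{Cor43}.

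The $u$-term gives $\Gamma_1\Lambda_1^{\eps}(\Lambda_1/\lambda)^{3\eps+d/p_b}\sup|v|$ over $[-2,2]\times B_1(x)$. By \eqref{Mom1}, $\sup|v|\lesssim\ell_0(4)\|\rho_1\|_{L^\infty(B_1(x))}\|f\|_{\mathcal B_{\rho_0}}$.

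The $f$-term is $R^\theta\Lambda_1^{1/q}\lambda^{-1}R^{1-\theta}\|f\|_{L^\infty(Q_R)}$ with $R\le\Lambda_1R_0/(\lambda\cG_0)\lesssim1$. Since $\cG_0\ge\Lambda_1/\lambda$, this is $\lesssim\Lambda_1^{\eps}\lambda^{-1}\|f\|_{L^\infty(B_1(x))}$. It remains to see that this is dominated by the main right-hand side. The point to check is that $\lambda^{-1}\le\Lambda_1/\lambda\le\Gamma_1$, which holds since $\Lambda_1\ge1$. Also $\|f\|_{L^\infty(B_1(x))}\le\|\rho_0\|_{L^\infty(B_1)}\|f\|_{\mathcal B_{\rho_0}}$, and we need this controlled by $\|\rho_1\|_{L^\infty(B_1)}$. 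This follows from \eqref{Mom1} at $t=0$, which gives $\rho_0\le\ell_0(0)\rho_1$. This step, verifying the hypotheses of Theorems \ref{grad22}/\ref{grad1} and absorbing the source term, is the main technical obstacle.

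\textbf{Step 3: gradient bound for $u_2$.} For $u_2$ use Theorem \ref{thm:main-intro2}(i) under the centering \eqref{Center}, which gives for each $t\ge2$
\[
|\nabla\mathcal T_tf(x)|\lesssim\ell_1(t-1)\,\Lambda_1^{\eps}(\Lambda_1/\lambda)^{3\eps+d/p_b}\,\Gamma_1\,\|\rho_1\|_{L^\infty(B_1)}\,\|f\|_{\mathcal B_{\rho_0}}.
\]
To differentiate under the integral, write $u_2=\mathcal T_1w$ with $w:=\int_1^\infty\mathcal T_tf\,\dif t$. This identity follows from Fubini, using $|\mathcal T_tf|\le\ell_1\rho_1\|f\|_{\mathcal B_{\rho_0}}$ together with \eqref{Mom1}; it uses a moment bound of $\rho_1$, which I would instead avoid as follows. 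Apply the gradient bound to the truncated integrals $\int_2^N\mathcal T_tf\,\dif t$; they converge locally uniformly, and their gradients are uniformly bounded and converge, since $\ell_1\in L^1$. Hence $\nabla u_2=\int_2^\infty\nabla\mathcal T_tf\,\dif t$, bounded by $\|\ell_1\|_{L^1}$ times the same factor. The constant's dependence on $\ell_1$ enters here.

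\textbf{Step 4: regularity and the equation.} Summing the two pieces gives \eqref{u2}. For $\mW^2_{p;\mathrm{loc}}$ regularity and the weak equation, observe that $\mathcal T_2u_2=\int_4^\infty\mathcal T_tf\,\dif t$ gives
\[
u=\int_0^4\mathcal T_tf\,\dif t+\mathcal T_4u\quad\text{(up to rearrangement)}.
\]
Both terms are probabilistic solutions covered by Theorem \ref{grad22}, with data $f$ and terminal value $u\in\mathcal B_{\rho_1}$, which requires a moment bound for $\rho_1$. More simply, $v_\infty(t,x):=\int_t^\infty\mathcal T_{s-t}f\,\dif s=u(x)$ is independent of $t$. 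As the limit of the truncated solutions $\int_t^N$, it solves $\partial_tv+\sL v+f=0$ in $\mW^{1,2}_{q,p;\mathrm{loc}}$ via the local estimate \eqref{essob} and the weak limits used in Theorem \ref{Th37}. Since $\partial_tv=0$, this gives $\sL u=-f$ weakly and $u\in\mW^2_{p;\mathrm{loc}}$ for every $p>d$, taking $q$ large.
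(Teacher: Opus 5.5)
Your proposal is correct, but it departs from the paper's proof in two places.

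\emph{Gradient bound.} The paper differentiates under the integral in \eqref{pou}. It then integrates the pointwise bound of Theorem~\ref{thm:main-intro}(i) (singularity $t^{-1/2}$) over $(0,1)$ and that of Theorem~\ref{thm:main-intro2}(i) over $[1,\infty)$. You instead realize $\int_0^2\cT_tf\,\dif t$ as the time-zero slice of the probabilistic solution of the inhomogeneous Kolmogorov equation on $(-2,2)$, and apply Theorem~\ref{grad1} once, source term included. This avoids exchanging $\nabla$ and $\int$ near $t=0$. Your absorption of the source term (via $R\le R_0<1$, $\lambda^{-1}\le\Gamma_1$ and $\rho_0\le\ell_0(0)\rho_1$) is right. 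Your cut at $t=2$ also matches the range $t>2$ where Theorem~\ref{thm:main-intro2}(i) is actually stated. On $[2,\infty)$ you still need to exchange $\nabla$ and $\int$, and truncating at $N$ does not remove that need. What justifies the exchange is a locally uniform integrable majorant $\ell_1(t-1)K(y)$, i.e.\ local boundedness in $y$ of the right-hand side of \eqref{s3}, exactly as in the paper. In both routes the constant also depends on $\ell_0$.

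\emph{The equation.} The paper solves a Dirichlet problem on each bounded smooth domain by external elliptic theory. It then identifies that solution with $u$ via It\^o's formula and the strong Markov property. You instead obtain $u$ as the $t$-independent limit of $v_N(t,x)=\int_t^N\cT_{s-t}f(x)\,\dif s$, each covered by Theorem~\ref{grad22} on $(-1,N)$, and pass to the limit with \eqref{essob} as in Theorem~\ref{Th37}. You should state explicitly the $N$-uniform bound $|v_N(t,x)|\le(\ell_0(1)+\|\ell_1\|_{L^1})\rho_1(x)\|f\|_{\cB_{\rho_0}}$, from \eqref{Mom1} on $[0,1]$ and \eqref{Erg1} with $\mu(f)=0$ on $[1,\infty)$, since that is where centering and $\ell_1\in L^1$ enter. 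Your route stays within the paper's own local estimates: it needs no external solvability theorem and no It\^o--Krylov formula for $\mW^2_p$ functions. The paper's route is shorter.

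Finally, \eqref{essob} requires $p\le p_b$, so your argument gives $u\in\mW^2_{p;\mathrm{loc}}$ only for $p\in(d,p_b]$, not for every $p>d$. This is the natural range anyway, since $\tr(a\nabla^2u)=-f-b\cdot\nabla u$ with $\nabla u$ merely continuous.
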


\begin{proof}
\textbf{Step 1. Gradient bound.}
Using the short-time gradient estimate for $t\in(0,1)$ and the long-time decay estimate for $t\ge1$,
we obtain
\begin{align*}
|\nabla_xu(x)|
&=\left|\nabla_x\int_0^\infty \mathcal T_t f(x)\,\dif t\right|
\leq \int_0^1 |\nabla_x\mathcal T_t f(x)|\,\dif t
+\int_1^\infty |\nabla_x\mathcal T_t f(x)|\,\dif t\\
&\lesssim\Gamma_1(x)
\Lambda_1^{\eps}(x)\left(\frac{\Lambda_1(x)}{\lambda(x)}\right)^{3\eps+d/p_b}
\|\rho_1\|_{L^\infty(B_1(x))}\,\|f\|_{\mathcal B_{\rho_0}}
\left(\int_0^1 t^{-1/2}\,\dif t+\int_1^\infty \ell_1(t)\,\dif t\right),
\end{align*}
which is finite since $\ell_1\in L^1([0,\infty))$. This yields \eqref{u2}.

\vspace{1mm}
\textbf{Step 2. Local $\mW^2_p$ regularity and identification of the Poisson equation.}
Fix an arbitrary bounded smooth domain $\cO\subset\mathbb{R}^d$.
Consider the Dirichlet problem
\begin{equation}\label{di}
\left\{
\begin{array}{ll}
\sL \hat u(x)=-f(x),& x\in \cO,\\[1mm]
\hat u=u,& x\in\partial\cO.
\end{array}\right.
\end{equation}
Since $u$ is continuous, standard elliptic theory (e.g.\ \cite[Theorem~1.2]{Kr00}) implies that for every $p>d$ there exists a unique solution
$\hat u\in \mW^{2}_{p}(\cO)\cap C(\overline\cO)$ to \eqref{di}.

Let $\tau_\cO:=\inf\{t\geq 0: X_{t}(x)\notin \cO\}$ be the first exit time from $\cO$.
Applying It\^o's formula to $\hat u(X_{t\wedge\tau_\cO}(x))$ and taking expectations, we obtain
\begin{align}\label{Ito-hat}
\hat u(x)
&= \mathbb{E}\,\hat u\big(X_{\tau_\cO}(x)\big)
-\mathbb{E}\!\left(\int_0^{\tau_\cO} \sL\hat u(X_s(x))\,\dif s\right)\no\\
&= \mathbb{E}\,u\big(X_{\tau_\cO}(x)\big)
+\mathbb{E}\!\left(\int_0^{\tau_\cO} f(X_s(x))\,\dif s\right).
\end{align}
On the other hand, by the definition of $u$ and the strong Markov property,
\begin{align}\label{Markov-u}
u(x)
&=\mathbb{E}\!\left(\int_0^{\tau_\cO}  f(X_s(x))\,\dif s\right)
+\mathbb{E}\!\left(\int_{\tau_\cO}^{\infty}  f(X_s(x))\,\dif s\right)\\
&=\mathbb{E}\!\left(\int_0^{\tau_\cO}  f(X_s(x))\,\dif s\right)
+\mathbb{E}\!\left[\int_0^\infty  \mathbb{E} f\big(X_{s}(x')\big)\,\dif s\Big|_{x'=X_{\tau_\cO}(x)}\right]\notag\\
&=\mathbb{E}\!\left(\int_0^{\tau_\cO} f(X_s(x))\,\dif s\right)
+\mathbb{E}\,u\big(X_{\tau_\cO}(x)\big).
\notag
\end{align}
Combining \eqref{Ito-hat} and \eqref{Markov-u} yields $\hat u(x)=u(x)$ for all $x\in\cO$.
As $\cO$ is arbitrary, this shows that $u\in \mW^{2}_{p;\mathrm{loc}}(\mathbb{R}^d)$ for every $p>d$ and that $u$ solves \eqref{eq:poisson}.
\end{proof}

Under strengthened regularity assumptions, we obtain a Schauder-type estimate.

\begin{theorem}
Assume that {\bf (H$^\sigma_b$)$''$} and \eqref{LL2} hold.
Suppose $b,f\in \mathbf{C}_{\mathrm{loc}}^{\alpha}(\mathbb{R}^{d})$ and $f\in\mathcal{B}_{\rho_0}$ satisfies \eqref{Center}.
Then there exists a constant $C=C(d,\alpha,\ell_0,\ell_1)>0$ such that for every $x\in\mathbb{R}^d$,
\begin{align}\label{u3}
\|u\|_{\bC^{2+\alpha}(B_{1/2}(x))}
\lesssim_C
\Bigl[\widetilde{\Gamma}_1^{\,2+\alpha}(x)\,\rho_1(x)+\rho_0(x)/\lambda(x) \Bigr]\,
\|f\|_{\bC^\alpha_{\rho_0}},
\end{align}
where $\widetilde{\Gamma}_1(x)$ is defined by \eqref{s2} with $t=1$.
\end{theorem}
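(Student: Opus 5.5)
The strategy is to regard $u$ as a solution of the Poisson equation, which the previous theorem already provides: $u\in \mW^2_{p;\mathrm{loc}}$ solves \eqref{eq:poisson}. Once we know $u\in\bC^{2+\alpha}_{\mathrm{loc}}$, we then apply the local Schauder machinery of Section~3 in its time-independent form. Concretely, $u$ viewed as a constant-in-time function solves \eqref{pde31} on $(S,T)\times\mathbb{R}^d$ for any $S<T$ (take $S=0$, $T=2$ say), with source $f$. First I would verify that $u\in\bC^{2+\alpha}_{\mathrm{loc}}$. Since $b,f\in\bC^\alpha_{\mathrm{loc}}$, $a$ is locally H\"older and locally uniformly elliptic, and $u\in \mW^2_{p;\mathrm{loc}}$ solves $\sL u=-f$, classical interior Schauder theory for elliptic equations gives this (alternatively, use the local Dirichlet problem \eqref{di} with $\bC^{2+\alpha}$ solvability and the identification $\hat u=u$ from the previous proof).

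Next I would apply Theorem~\ref{grad3} to $u$ at $(t,x)$ with $t=1$, $S=0$, $T=2$, so that $R_0=1/\sqrt2$ and $\cH(x)\asymp\widetilde\Gamma_1(x)$. Estimate \eqref{Da3} then gives
$$[\nabla^2 u]_{\bC^\alpha(B_{1/2}(x))}\lesssim \Big(\tfrac{\Lambda_1(x)}{\lambda(x)R}\Big)^{2+\alpha}\|u\|_{L^\infty(B_1(x))}+\frac{\|f\|_{L^\infty(B_1(x))}+R^\alpha[f]_{\bC^\alpha(B_1(x))}}{\lambda(x)R^\alpha}.$$
Here $R=\Lambda_1R_0/(\lambda\cH)$, so $\Lambda_1/(\lambda R)\asymp\widetilde\Gamma_1(x)$. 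For the zero-order term I use $|u(y)|\lesssim \rho_1(y)\|f\|_{\cB_{\rho_0}}$ from the bound after \eqref{pou}. The $f$ terms are bounded by $\|f\|_{\bC^\alpha(B_1(x))}\le \rho_0(x)\|f\|_{\bC^\alpha_{\rho_0}}$ times $\lambda^{-1}R^{-\alpha}\lesssim \lambda^{-1}\widetilde\Gamma_1^\alpha$. Similarly, \eqref{Ds0} controls $\|\nabla^2u\|_{L^\infty(B_{1/2}(x))}$ after taking the supremum over centers $y\in B_{1/2}(x)$ and using \eqref{LL2} to compare $\lambda(y),\Lambda(y)$ with $\lambda(x),\Lambda(x)$.

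The lower-order pieces follow directly. The gradient is handled by \eqref{u2} (or by interpolation, Lemma~\ref{inter11}, between $\|u\|_\infty$ and $[\nabla^2u]_\alpha$), and $\|u\|_{L^\infty(B_{1/2}(x))}\lesssim\|\rho_1\|_{L^\infty(B_1(x))}\|f\|_{\cB_{\rho_0}}$. To reduce $\|\rho_1\|_{L^\infty(B_1(x))}$ to $\rho_1(x)$, I would enlarge constants via \eqref{Dt1}-type comparability, or else shrink the ball and accept a constant. Collecting terms gives \eqref{u3}, except for the mixed term $\widetilde\Gamma_1^\alpha\rho_0/\lambda$, which must be absorbed.

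I expect this absorption to be the main obstacle: matching the exact form of \eqref{u3}, in particular showing that the $f$ contribution is $\rho_0(x)/\lambda(x)$ without an extra factor $\widetilde\Gamma_1^\alpha$. The approach I would try is to write $R^{-\alpha}\|f\|_\infty/\lambda$ and split by Young's inequality into $\widetilde\Gamma_1^{2+\alpha}\rho_1$ plus $\rho_0/\lambda$, using $\rho_0\lesssim\rho_1$ (which holds since $\mathbb E\rho_0(X_t)\le\ell_0\rho_1$ and continuity). If this fails, I would instead fall back on the representation $u=\int_0^\infty\cT_tf\,\dif t$. On $[2,\infty)$ I would use the long-time Hessian bound \eqref{s4}, integrated against $\ell_1\in L^1$. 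On $(0,2)$ I would rely on the Corollary's $t^{-1+(\beta-\alpha)/2}$ bound with $\beta=\alpha$, which gives the integrable singularity $t^{-1}$ only marginally. That marginal non-integrability is precisely why the PDE route above is preferable.
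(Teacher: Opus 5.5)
Your proof is correct, but it is organized differently from the paper's. The paper never applies a Schauder estimate to $u$ itself. It splits $u=\int_0^1\cT_sf\,\dif s+\int_1^\infty\cT_sf\,\dif s$, treats the first piece as a finite-horizon probabilistic solution of the parabolic equation with source $f$ (via Theorems~\ref{grad22} and~\ref{grad3}), and controls the tail by integrating the long-time Hessian decay \eqref{s4} against $\ell_1\in L^1$.

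You instead take the previous theorem's $\mW^2_{p}$ strong solution and upgrade it to $\bC^{2+\alpha}_{\rm loc}$ by interior Schauder theory. You then regard it as a time-independent solution of \eqref{pde31} and apply \eqref{Da3} directly with $S=0$, $T=2$, $t=1$; here indeed $\cH(x)=\widetilde\Gamma_1(x)$ and $\Lambda_1/(\lambda R)=\sqrt2\,\widetilde\Gamma_1$. Ergodicity then enters only through $|u|\lesssim\rho_1\|f\|_{\cB_{\rho_0}}$.

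Your route is shorter and avoids both \eqref{s4} and differentiation under $\int_1^\infty$. The paper's route stays within the semigroup framework and needs neither the Poisson-equation identification nor an elliptic regularity upgrade. It also shows why your Corollary-based fallback is unnecessary: the head is handled as one parabolic problem with source, not by integrating the non-integrable $t^{-1}$ bound.

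Your treatment of the $f$-term is more careful than the paper's. The paper writes only $\lambda^{-1}\|f\|_{\bC^\alpha(B_1(x))}$, silently dropping the factor $R^{-\alpha}$ in \eqref{Da3}. The absorption you propose does work, for two reasons. First, $\rho_0(x)\le\ell_0(0)\rho_1(x)$; this is just \eqref{Mom1} at $t=0$, so no continuity is needed. Second, $\lambda^{-1}\le\Lambda_1/\lambda\le\widetilde\Gamma_1$ and $\widetilde\Gamma_1\ge1$. Together these give $\widetilde\Gamma_1^{\alpha}\rho_0/\lambda\le\ell_0(0)\,\widetilde\Gamma_1^{2+\alpha}\rho_1$. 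This is a monotonicity bound, not Young's inequality, and it also shows the $\rho_0/\lambda$ term in \eqref{u3} is redundant.

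For the sup-norm pieces, do not invoke \eqref{Ds0} or \eqref{u2} at other centres $y$. Their constants involve norms of $a,b$ on $B_1(y)\not\subset B_1(x)$, which \eqref{LL2} does not control. Instead use $|\nabla^2u(y)|\le|\nabla^2u(x)|+2^{-\alpha}[\nabla^2u]_{\bC^\alpha(B_{1/2}(x))}$, and interpolate $\nabla u$ between $u$ and $\nabla^2u$ on $B_{1/2}(x)$.

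Finally, one caveat applies to both proofs. Replacing $\|\rho_1\|_{L^\infty(B_1(x))}$ by $\rho_1(x)$ requires a comparability such as \eqref{Dt1} for $\rho_1$. This is not among the hypotheses, and the paper's appeal to \eqref{s2} and \eqref{LL2} does not supply it. Shrinking the ball does not help, since \eqref{Ds0} and \eqref{Da3} always see $u$ on a neighbourhood of $x$.
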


\begin{proof}
For $t>0$ we split $u$ as
\[
u(x)=\int_{0}^t\mathcal{T}_s f(x)\dif s+\int^\infty_t\mathcal{T}_sf(x)\dif s=:u_0(t,x)+u_1(t,x).
\]
Take $t=1$. For the short-time part $u_0$, since by \eqref{s2} and \eqref{LL2},
\[
\|u_0\|_{L^\infty([0,2]\times B_1(x))}
\lesssim \|\rho_1\|_{L^\infty(B_1(x))}\,\|f\|_{\mathcal B_{\rho_0}}
\lesssim \rho_1(x)\|f\|_{\mathcal B_{\rho_0}},
\]
arguing as in the proof of Theorem~\ref{grad2} and employing Theorems~\ref{grad22} and \ref{grad3}, we obtain
\begin{align*}
\|\nabla^2_xu_0(1,\cdot)\|_{\bC^\alpha(B_{1/2}(x))}
&\lesssim \widetilde{ \Gamma}^{2+\alpha}_{1}(x)\|u_0\|_{L^\infty([0,2]\times B_1(x))}
+\lambda(x)^{-1}\|f\|_{\bC^\alpha(B_1(x))}\\
&\lesssim \bigl(\widetilde{ \Gamma}^{2+\alpha}_{1}(x)\,\rho_1(x)+\rho_0(x)/\lambda(x)\bigr)\|f\|_{\bC^\alpha_{\rho_0}}.
\end{align*}
For the long-time part $u_1$, estimate \eqref{s4} yields
\begin{align*}
\|\nabla^2_xu_1(1,\cdot)\|_{\bC^\alpha(B_1(x))}
&\leq \int_{1}^{\infty}\|\nabla^2_x \mathcal{T}_s f\|_{\bC^\alpha(B_1(x))}\dif s\\
&\lesssim \widetilde{ \Gamma}^{2+\alpha}_{1}(x)\|\rho_1\|_{L^\infty(B_{1}(x))}\|f\|_{\mathcal{B}_{\rho_0}}
\int_{1}^{\infty}\ell_1(s-1)\dif s.
\end{align*}
The integral is finite because $\ell_1\in L^1([0,\infty))$ under {\bf (H$^\sigma_b$)$''$}. Hence both the short-time and the long-time contributions are finite, and combining the two estimates gives \eqref{u3}.
\end{proof}

\section{Gradient estimates for SDEs with distributional drifts}

As an application of the pointwise gradient estimates derived in the previous sections,
we establish a short-time gradient bound for the Markov semigroup associated with SDEs
whose drift contains a distributional component, a class of highly singular coefficients.

For $(\alpha,p)\in(\mathbb{R}\setminus\mathbb{Z})\times[1,\infty]$, let $\mathbb{H}^\alpha_p$ denote the Bessel potential space
\[
\mathbb{H}^\alpha_p
:=\Big\{f\in \mathcal{S}'(\mathbb{R}^d): \|f\|_{\mathbb{H}^\alpha_p}:=\|(I-\Delta)^{\alpha/2}f\|_{L^p}<\infty\Big\},
\]
where $(I-\Delta)^{\alpha/2}$ is defined via Fourier transform by
\[
(I-\Delta)^{\alpha/2}f:=\Big((1+|\xi|^2)^{\alpha/2}\widehat f(\xi)\Big)^{\check{}}.
\]

Fix $d\geq2$ and consider the following SDE on $\mathbb{R}^d$:
\begin{align}\label{in:SDE}
\dif X_t = b(t,X_t)\,\dif t + \sqrt{2}\,\dif W_t,\qquad t\ge0,
\end{align}
where the drift $b=b_1+b_2$ satisfies:

\vspace{1mm}
\begin{enumerate}[{\bf (A)}]
\item
There exist $\alpha_b\in(-1,-\tfrac12]$ and $p_b,q_b\in[2,\infty]$ such that
\[
\tfrac{d}{p_b}+\tfrac{2}{q_b}<1+\alpha_b,
\qquad
b_1,\ \div b_1\in \bigcap_{T>0} L^{q_b}\big([0,T];\mathbb{H}^{\alpha_b}_{p_b}\big),
\]
and there exist constants $c_0,c_1\geq0$ such that
\[
|b_2(t,x)|\leq c_0+c_1|x|,\qquad (t,x)\in[0,\infty)\times\mathbb{R}^d.
\]
\end{enumerate}
Since $b_1$ is a distribution, the term $b_1(t,X_t)$ is not classically defined.
Following \cite{HZ25}, solutions are introduced through mollification.
Let $\phi\in C_c^\infty(\mathbb{R}^d)$ be a nonnegative mollifier with $\int\phi=1$ and set $\phi_n(x):=n^d\phi(nx)$.
Define the smooth approximation of $b_1$ by
\[
b_{1;n}(t,x):=(b_1(t,\cdot)*\phi_n)(x).
\]
Let $\mathcal P(\mathbb{R}^d)$ denote the set of Borel probability measures on $\mathbb{R}^d$.
We begin by introducing the concept of weak solutions for the SDE \eqref{in:SDE} with a distributional drift.

\begin{definition}[Weak solutions]\label{Def1}
Let $\mathfrak F=(\Omega,\mathcal F,(\mathcal F_t)_{t\ge0},\mathbb P)$ be a stochastic basis and
let $(X,W)$ be $\mathbb{R}^d$-valued, continuous, $(\mathcal F_t)$-adapted processes.
We call $(\mathfrak F,X,W)$ a weak solution of \eqref{in:SDE} with initial law $\mu\in\mathcal P(\mathbb{R}^d)$ if
$W$ is an $(\mathcal F_t)$-Brownian motion, $\mathbb P\circ X_0^{-1}=\mu$, and for all $t\in[0,T]$,
\[
X_t = X_0 + A^{b_1}_t + \int_0^t b_2(s,X_s)\,\dif s + \sqrt{2}\,W_t,\qquad\mathbb P\text{-a.s.},
\]
where
\[
A^{b_1}_t := \lim_{n\to\infty}\int_0^t b_{1;n}(s,X_s)\,\dif s \mbox{ exists in $L^2(\Omega)$.}
\]
\end{definition}

We recall the following result from \cite{HZ25}.

\begin{theorem}[Existence, uniqueness, and Krylov estimate]
Assume {\bf (A)}. Then for each $x\in\mathbb{R}^d$, there exists a weak solution $(\mathfrak F,X,W)$ of \eqref{in:SDE}
starting from $x$ in the sense of Definition~\ref{Def1}, and it is unique in the class of solutions satisfying the Krylov estimate:
for any $(\alpha,p,q)\in[\alpha_b,0]\times[2,\infty]^2$ with
\[
\tfrac{q_b}{2}\leq q\leq q_b,\qquad p\leq p_b,\qquad
\alpha-\tfrac{d}{p}-\tfrac{2}{q}\geq \alpha_b-\tfrac{d}{p_b}-\tfrac{2}{q_b},
\]
and for any $T,m>0$, there exist $\theta>0$ and $C=C(T,m,\alpha,p,q)>0$ such that for all
$f\in L^q([0,T];\mathbb H_p^\alpha)\cap L^q([0,T];C_b^\infty)$ and $0\leq t_0<t_1\leq T$,
\begin{align}\label{Kry2}
\left\|\int_{t_0}^{t_1} f(s,X_s)\,\dif s\right\|_{L^m(\Omega)}
\leq C\,(t_1-t_0)^{\frac{1+\theta}{2}}\,
\|f\|_{L^q([0,T];\mathbb H_p^\alpha)}.
\end{align}
\end{theorem}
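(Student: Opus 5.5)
Since this theorem is recalled from \cite{HZ25}, the plan is to reconstruct its proof in three stages. First, reduce to a drift whose regular part is bounded. Second, remove the distributional part by a Zvonkin transform. Third, derive the Krylov estimate \eqref{Kry2} from parabolic regularity for the heat operator perturbed by $b_1$.

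\textbf{Localization.} For $R>0$ let $b_2^R:=b_2\chi_R$ with $\chi_R$ a cutoff equal to $1$ on $B_R$, and set $b^R:=b_1+b_2^R$. Suppose existence, uniqueness in the Krylov class, and \eqref{Kry2} are known for $b^R$, with constants depending on $R$ only through $\|b_2^R\|_\infty\lesssim c_0+c_1R$. Define $\tau_R$ as the exit time from $B_R$. Solutions for different $R$ agree up to $\tau_{R}\wedge\tau_{R'}$ by uniqueness, so they patch to a solution of \eqref{in:SDE}. The linear growth $|b_2|\leq c_0+c_1|x|$ combined with the Krylov bound for the $b_1$-part (which controls $A^{b_1}$ in $L^m$) gives, via Gronwall, the moment bound $\mathbb E\sup_{t\le T}|X_t|^m\lesssim 1+|x|^m$. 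Hence $\mathbb P(\tau_R<T)\to0$, which yields non-explosion. The Krylov estimate for $X$ then follows by splitting on $\{\tau_R\geq T\}$ and using Hölder's inequality. Alternatively, a bounded $b_2^R$ can be removed by Girsanov's theorem, since $\exp(\int b_2^R\,\dif W-\frac12\int|b_2^R|^2)$ has all moments. This further reduces the problem to $b=b_1$ alone.

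\textbf{Zvonkin transform for $b_1$.} Solve, for large $\lambda>0$, the backward equation
\[
\partial_t u+\Delta u+b_1\cdot\nabla u-\lambda u+b_1=0,\qquad u(T)=0,
\]
in $L^{q_b}([0,T];\mathbb H^{2+\alpha_b}_{p_b})$. The hypothesis $\alpha_b>-1$ places $b_1\cdot\nabla u$ in a space that is meaningful, provided $\nabla u$ is Hölder of order greater than $|\alpha_b|$ and the paraproduct is controlled. This is guaranteed by $\frac d{p_b}+\frac2{q_b}<1+\alpha_b$. The assumption on $\div b_1$ is used to rewrite $b_1\cdot\nabla u=\div(b_1u)-u\,\div b_1$, which closes the fixed point in the negative-order space. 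For $\lambda$ large, $\|\nabla u\|_\infty\leq\frac12$, so $\Phi_t(x)=x+u(t,x)$ is a $C^1$-diffeomorphism. At the level of the mollified drifts $b_{1;n}$, Itô's formula shows that $Y_t=\Phi_t(X_t)$ solves an SDE whose drift is $\lambda u\circ\Phi^{-1}$ (bounded) and whose diffusion is $\sqrt2(I+\nabla u)\circ\Phi^{-1}$, which is Hölder and uniformly elliptic. Using stability of $u_n\to u$, we pass $n\to\infty$ through tightness of the laws of $X^n$. This gives existence and identifies $A^{b_1}_t$ as the $L^2(\Omega)$-limit, with the $L^2$ convergence supplied by the uniform Krylov estimate below.

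\textbf{Krylov estimate and uniqueness.} For smooth $f$, apply Itô's formula to $v(t,X_t)$, where $v$ solves $\partial_tv+\Delta v+b_1\cdot\nabla v=f$ on $[t_0,t_1]$. This expresses $\int_{t_0}^{t_1}f(s,X_s)\,\dif s$ as $v$ evaluated at the endpoints plus a martingale. Short-interval parabolic estimates give $\|v\|_\infty+\|\nabla v\|_\infty\lesssim(t_1-t_0)^{(1+\theta)/2}\|f\|_{L^q\mathbb H^\alpha_p}$, where $\theta>0$ is the gap in the scaling inequality. BDG then yields \eqref{Kry2}, uniformly in $n$ for $X^n$. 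Uniqueness follows because any solution in the Krylov class satisfies the same Itô/Zvonkin identity (the Krylov estimate justifies the limit in the mollification). Hence $\Phi(X)$ solves the transformed SDE, which is weakly well posed since its diffusion is Hölder and nondegenerate and its drift is bounded.

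\textbf{Main obstacle.} I expect the hardest step to be the solvability of the Zvonkin PDE with distributional $b_1$ and the justification of Itô's formula through the mollification limit. Both require sharp product estimates in $\mathbb H^{\alpha_b}_{p_b}$ together with a uniform-in-$n$ Krylov bound, to be obtained by a bootstrap: Krylov estimates for $X^n$ first in a weaker norm, then upgraded.
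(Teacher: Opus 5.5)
The paper does not prove this statement; it quotes it from \cite{HZ25}. The way Section~6 uses that reference shows that the source follows the same Zvonkin route you propose. It takes a $C^1$-diffeomorphism $\Phi_t$ from \cite[Section 4]{HZ25}, with transformed drift $\lambda(y-\Phi_t^{-1}(y))+(b_2\cdot\nabla\Phi_t)(\Phi_t^{-1}(y))$ and diffusion $\nabla\Phi_t(\Phi_t^{-1}(y))$. With $\Phi_t=x+u(t,\cdot)$, the first term is your $\lambda u\circ\Phi_t^{-1}$, and $b_2$ is kept in the new drift rather than removed by Girsanov.

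So your architecture is right, but the justification of its central analytic step fails. You claim that subcriticality makes $\nabla u$ H\"older of order greater than $|\alpha_b|$, so that $b_1\cdot\nabla u$ is a Young product. Under \textbf{(A)} this never happens. The equation gives at best $\nabla u\in\mathbf{C}^{\gamma}$ with $\gamma\leq 1+\alpha_b-\frac{d}{p_b}-\frac{2}{q_b}\leq 1+\alpha_b\leq\frac12\leq|\alpha_b|$, so $\gamma+\alpha_b\leq 1+2\alpha_b\leq 0$. The range $\alpha_b\in(-1,-\frac12]$ is exactly where this product is undefined, which is why $\div b_1$ appears in \textbf{(A)} at all.

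Nor can the identity $b_1\cdot\nabla u=\div(b_1u)-u\,\div b_1$ be used term by term. The product $b_1u$ is no smoother than $b_1$ (because of the paraproduct $T_ub_1$), so $\div(b_1u)$ lies only in $\mathbb{H}^{\alpha_b-1}_{p_b}$. The fixed point would then give at best $u\in\mathbb{H}^{1+\alpha_b}_{p_b}$, with $\nabla u$ not even bounded, hence no diffeomorphism.

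One way that works is Bony's decomposition $b_1\cdot\nabla u=T_{\nabla u}b_1+T_{b_1}\nabla u+R(b_1,\nabla u)$, with $T$ the paraproduct and $R$ the resonant term. Use the divergence structure only on the resonant part: $\sum_kR(b_1^k,\partial_ku)=\sum_k\partial_kR(b_1^k,u)-R(\div b_1,u)$. Subcriticality gives $u\in\mathbf{C}^{1+\varepsilon}$, and $\alpha_b>-1$. Hence both resonant products on the right are defined, $\partial_kR(b_1^k,u)$ has regularity at least $\alpha_b+\varepsilon$, and the paraproducts only need $\nabla u\in\mathbf{C}^{\varepsilon}$. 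This yields a bound for $b_1\cdot\nabla u$ in $L^{q_b}([0,T];\mathbb{H}^{\alpha_b}_{p_b})$ by $\|b_1\|+\|\div b_1\|$ times a $\mathbf{C}^{1+\varepsilon}$-norm of $u$. That bound is what closes the fixed point for large $\lambda$ and solves your equation for $v$. Applied to $b_{1;n}-b_1$, it also lets the Krylov estimate justify It\^o's formula for $\Phi_t(X_t)$ in the uniqueness step.

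Your localization also needs constants that are uniform in $R$. If the Krylov constants for $X^R$ depend on $\|b_2^R\|_\infty\lesssim c_0+c_1R$ (exponentially, if they come from Girsanov), the Gronwall moment bound grows with $R$. Then it does not give $\mathbb{P}(\tau_R<T)\to0$, and splitting \eqref{Kry2} on $\{\tau_R<T\}$ has the same problem. Take moments from $Y^R=\Phi(X^R)$ instead: its drift $\lambda u\circ\Phi^{-1}+(b_2^R\cdot\nabla\Phi)\circ\Phi^{-1}$ has linear growth uniformly in $R$, its diffusion is bounded, and $|Y^R_t-X^R_t|\leq\|u\|_\infty$. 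Then prove \eqref{Kry2} directly for $X$ via It\^o's formula for $v(t,X_t)$, with $v$ solving the $b_1$-only equation; the extra term $\int_{t_0}^{t_1}\nabla v\cdot b_2(s,X_s)\,\dif s$ is controlled by these moments. Finally, the correct scaling is $\|\nabla v\|_\infty\lesssim(t_1-t_0)^{\theta/2}\|f\|_{L^q([0,T];\mathbb{H}^\alpha_p)}$, not $(t_1-t_0)^{(1+\theta)/2}$. The factor $(t_1-t_0)^{1/2}$ from BDG then yields the exponent in \eqref{Kry2}.
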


Now we can show the following gradient estimate.
\begin{theorem}
Assume {\bf (A)}. For every $T>0$ there exists a constant $C>0$
such that
for all $\varphi\in L^\infty(\mathbb{R}^d)$ and all $(t,x)\in(0,T]\times\mathbb{R}^d$,
\[
|\nabla_x \mathbb E\,\varphi(X_t(x))|
\leq C\left(1+|x|+\frac{1}{\sqrt t}\right)\|\varphi\|_\infty.
\]
\end{theorem}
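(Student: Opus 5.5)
We will use a Zvonkin-type transform to remove $b_1$, and then apply the time-inhomogeneous short-time gradient estimate (Theorem~\ref{Cor43}) to the transformed SDE.

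\textbf{Step 1: Zvonkin transform.} Fix $T>0$ and a large $\lambda>0$. Solve the backward vector-valued PDE on $[0,T]\times\mathbb{R}^d$
\[
\partial_t \Phi+\Delta \Phi+b_1\cdot\nabla\Phi-\lambda\Phi+b_1=0,\qquad \Phi(T)=0,
\]
in the class $L^\infty([0,T];\mathbb H^{1+\alpha_b+2-\eps}_{p_b})$-type spaces, following \cite{HZ25}. The assumption $\tfrac d{p_b}+\tfrac2{q_b}<1+\alpha_b$, together with $\div b_1\in L^{q_b}\mathbb H^{\alpha_b}_{p_b}$ (used to make sense of $b_1\cdot\nabla\Phi$ as a distribution), gives $\nabla\Phi\in L^\infty([0,T];\bC^{\gamma}_b)$ for some $\gamma\in(0,1)$. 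Choosing $\lambda$ large gives $\|\nabla\Phi\|_\infty\le 1/2$. Then $x\mapsto \Psi_t(x):=x+\Phi(t,x)$ is a $C^{1+\gamma}$-diffeomorphism, uniformly in $t$, with $\|\nabla\Psi_t\|,\|\nabla\Psi_t^{-1}\|\le 2$. Using the mollified drifts $b_{1;n}$, Itô's formula, and the Krylov estimate \eqref{Kry2} to pass to the limit, $Y_t:=\Psi_t(X_t)$ solves
\[
\dif Y_t=\tilde b(t,Y_t)\dif t+\sqrt2\,\tilde\sigma(t,Y_t)\dif W_t,
\]
where
\[
\tilde b=\big(\lambda\Phi+(\mI+\nabla\Phi)b_2\big)\circ\Psi_t^{-1},\qquad \tilde\sigma=(\mI+\nabla\Phi)\circ\Psi_t^{-1}.
\]
Consequently $\tilde\sigma\in L^\infty\bC^\gamma$ with $\tfrac14|\xi|^2\le|\tilde\sigma\xi|^2\le 4|\xi|^2$. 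Since $|\Phi|$ is bounded, $|\tilde b(t,y)|\lesssim 1+|y|$.

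\textbf{Step 2: Apply the short-time estimate.} For the $Y$-equation, $({\bf \widetilde H}^\sigma_b)$ holds with $\rho_0=\rho_1\equiv1$, $q_b=p_b=\infty$, and $\lambda,\Lambda$ constant. Non-explosion follows from linear growth. Uniqueness of $Y$ is inherited from the uniqueness of $X$ in the Krylov class, via the bijection $\Psi$. Write
\[
\mathbb E\varphi(X_t(x))=\tilde{\mathcal T}_{0,t}\tilde\varphi(\Psi_0(x)),\qquad \tilde\varphi:=\varphi\circ\Psi_t^{-1},
\]
where $\tilde{\mathcal T}$ is the semigroup of $Y$. After a time shift, apply Theorem~\ref{Cor43} on $[S,T']=[-t,t]$, evaluated at time $0$. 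This gives $R_0\asymp\sqrt t\wedge1$ and $\mathcal G\lesssim 1+\|\tilde b\|_{L^\infty(B_1(y))}\lesssim 1+|y|$, the $a$-Hölder term being bounded. Hence
\[
|\nabla_y\tilde{\mathcal T}_{0,t}\tilde\varphi(y)|\lesssim \big(1+|y|+t^{-1/2}\big)\|\varphi\|_\infty.
\]
The chain rule, $\|\nabla\Psi_0\|\le2$, and $|\Psi_0(x)|\le|x|+C$ then yield the claim. Theorem~\ref{Cor43} is stated for time-dependent coefficients, so the time-inhomogeneity of $\tilde b,\tilde\sigma$ causes no difficulty.

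\textbf{Main obstacle.} The delicate point is Step 1: solving the Zvonkin PDE with distributional $b_1$ and obtaining the uniform bounds $\nabla\Phi\in L^\infty\bC^\gamma$ with small norm. This requires paraproduct estimates for $b_1\cdot\nabla\Phi$, for which the divergence condition is used. A second delicate point is justifying Itô's formula for $\Psi_t(X_t)$ through the approximations $b_{1;n}$ and the limit $A^{b_1}$. For both, I would rely on the corresponding results in \cite{HZ25} rather than reprove them.
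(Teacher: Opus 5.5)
Your proposal is correct and follows essentially the same route as the paper. The paper also uses the Zvonkin transform from \cite{HZ25} to pass to $Y_t=\Psi_t(X_t)$, which has a uniformly elliptic $\bC^\gamma$ diffusion and a linear-growth drift, then applies Theorem~\ref{Cor43} on $[-t,t]$ at time $0$ and concludes with the chain rule.

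Two details should be stated differently. First, the transformed coefficients must be extended to $[-t,0]$, not obtained by a ``time shift''; the paper freezes them at their time-$0$ values, which is harmless because $\tilde{\mathcal T}_{0,t}\tilde\varphi$ does not depend on the extension. Second, uniqueness for the $Y$-equation is better taken directly from classical well-posedness for uniformly elliptic, spatially H\"older diffusions with locally bounded drift. Pulling it back to the Krylov uniqueness class of $X$ is less clean, since $\Psi_t^{-1}$ is only $C^{1+\gamma}$.
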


\begin{proof}
For any $T>0$,
by \cite[Section 4]{HZ25}, there is a measurable function $\Phi_t(x):[0,T]\times\mathbb{R}^d\to\mathbb{R}^d$ such that
$x\mapsto \Phi_t(x)$ forms a $C^1$-diffeomorphism and
\begin{align}\label{GG2}
|\nabla_x\Phi_t(x)|,\ |\nabla_x\Phi^{-1}_t(x)|\leq 4,\ \ t\in[0,T], x\in\mathbb{R}^d.
\end{align}
Let $X$ be the unique weak solution of \eqref{in:SDE} satisfying the Krylov estimate \eqref{Kry2},
and define the transformed process
\[
Y_t:=\Phi_t(X_t).
\]
By \cite[Lemma 4.5]{HZ25}, $Y$ solves an SDE of the form
$$
\dif Y_t=\tilde b(t,Y_t)\dif t+\sqrt2\tilde \sigma(t,Y_t)\dif W_t,
$$
where
$$
\tilde b(t,y):=\lambda (y-\Phi^{-1}_t(y))+(b_2\cdot\nabla\Phi_t)(\Phi^{-1}_t(y)),\quad \tilde\sigma(t,y):=\nabla\Phi_t(\Phi^{-1}_t(y)).
$$
Moreover, $\tilde b$ and $\tilde\sigma$ are measurable, and there exist $\gamma\in(0,1)$ and $C_T>0$ such that
for all $t\in[0,T]$ and $y,y'\in\mathbb{R}^d$,
\begin{align}\label{coef-bds}
|\tilde b(t,y)|\leq C_T(1+|y|),\qquad
|\tilde\sigma(t,y)-\tilde\sigma(t,y')|\leq C_T|y-y'|^\gamma,
\end{align}
and the diffusion matrix is uniformly elliptic:
\begin{align}\label{ell-Y}
|\xi|/4\leq |\tilde\sigma(t,y)\xi|\leq 4|\xi|,
\qquad \forall \xi\in\mathbb{R}^d.
\end{align}

Fix $t\in(0,T]$. In order to apply the backward-in-time gradient estimate from Section~4
(which is stated on a symmetric time interval around $0$), we extend the coefficients
$(\tilde b,\tilde\sigma)$ from $[0,t]$ to $[-t,t]$ by freezing them for negative times:
\[
\tilde b(s,\cdot):=\tilde b(0,\cdot),\qquad \tilde\sigma(s,\cdot):=\tilde\sigma(0,\cdot),
\qquad s\in[-t,0].
\]
This extension preserves the bounds \eqref{coef-bds}--\eqref{ell-Y} on $[-t,t]$.

\vspace{1mm}
Let $\psi_t:=\varphi\circ \Phi_t^{-1}$. Then $\|\psi_t\|_\infty=\|\varphi\|_\infty$, and
\[
\mathbb E\,\varphi(X_t(x))
=\mathbb E\,\psi_t(Y_t(\Phi_0(x))).
\]
By Theorem~\ref{Cor43} with $(S,t,T)=(-t,0,t)$, there exists $C_T>0$ such that for all $y\in\mathbb{R}^d$,
\begin{align}\label{GG1-new}
\big|\nabla_y \mathbb E\,\psi_t(Y_{0,t}(y))\big|
\leq C_T\left(1+|y|+\frac{1}{\sqrt t}\right)\|\psi_t\|_\infty
= C_T\left(1+|y|+\frac{1}{\sqrt t}\right)\|\varphi\|_\infty.
\end{align}
By the chain rule and \eqref{GG2},
\[
\nabla_x \mathbb E\,\varphi(X_t(x))
=\nabla_y \Big(\mathbb E\,\psi_t(Y_{0,t}(y))\Big)\Big|_{y=\Phi_0(x)}\cdot \nabla_x\Phi_0(x),
\]
hence
\[
|\nabla_x \mathbb E\,\varphi(X_t(x))|
\leq \|\nabla_x\Phi_0\|_\infty\,
\big|\nabla_y \mathbb E\,\psi_t(Y_{0,t}(y))\big|_{y=\Phi_0(x)}.
\]
Combining \eqref{GG1-new} with \eqref{GG2} and using $|\Phi_0(x)|\leq 4|x|+C_T$, we obtain
the desired estimate.
\end{proof}


\begin{thebibliography}{99}

\bibitem{BGL14}
D.~Bakry, I.~Gentil, and M.~Ledoux,
\emph{Analysis and Geometry of Markov Diffusion Operators},
Springer, Cham, 2014.

\bibitem{Be-Lo}
M.~Bertoldi and L.~Lorenzi,
Estimates of the derivatives for parabolic operators with unbounded coefficients,
\emph{Trans. Amer. Math. Soc.} \textbf{357} (2005), 2627--2664.

\bibitem{Bi84}
J.-M.~Bismut,
\emph{Large Deviations and the Malliavin Calculus},
Birkh\"auser, Boston, MA, 1984.

\bibitem{CC18}
G. Cannizzaro and K. Chouk,
Multidimensional SDEs with singular drift and universal construction of the polymer measure with white noise potential,
\emph{Ann. Probab.} \textbf{46} (2018), 1710--1763.

\bibitem{Ce}
S.~Cerrai,
\emph{Second Order PDEs in Finite and Infinite Dimensions. A Probabilistic Approach},
Lecture Notes in Math., vol.~1762, Springer, Berlin, 2001.

\bibitem{Ce2}
S.~Cerrai,
Elliptic and parabolic equations in $\mathbb{R}^n$ with coefficients having polynomial growth,
\emph{Comm. Partial Diff. Equ.} \textbf{21} (1996), 281--317.

\bibitem{Cr-Do-Go-Ot-So}
D.~Crisan, P.~Dobson, B.~Goddard, M.~Ottobre, and I.~Souttar,
Poisson equations with locally Lipschitz coefficients and uniform in time averaging for stochastic differential equations via strong exponential stability,
\emph{Ann. Inst. Henri Poincar\'e Probab. Stat.} (to appear),
preprint (2022), arXiv:2204.02679.

\bibitem{Cr-Do-Ot}
D.~Crisan, P.~Dobson, and M.~Ottobre,
Uniform in time estimates for the weak error of the Euler method for SDEs and a pathwise approach to derivative estimates for diffusion semigroups,
\emph{Trans. Amer. Math. Soc.} \textbf{374} (2021), 3289--3330.

\bibitem{Cr-Ot}
D.~Crisan and M.~Ottobre,
Pointwise gradient bounds for degenerate semigroups (of UFG type),
\emph{Proc. Roy. Soc. A} \textbf{472} (2016), 20160442, 25 pp.

\bibitem{Eb-Gu-Zi}
A.~Eberle, A.~Guillin, and R.~Zimmer,
Quantitative Harris-type theorems for diffusions and McKean--Vlasov processes,
\emph{Trans. Amer. Math. Soc.} \textbf{371} (2019), 7135--7173.

\bibitem{EL94}
K.~D.~Elworthy and X.-M.~Li,
Formulae for the derivatives of heat semigroups,
\emph{J. Funct. Anal.} \textbf{125} (1994), 252--286.

\bibitem{FFRS}
A.~Fiorenza, M.~R.~Formica, T.~Roskovec, and F.~Soudsk\'y,
Detailed proof of classical Gagliardo--Nirenberg interpolation inequality with historical remarks,
\emph{Z. Anal. Anwend.} \textbf{40} (2021), 217--236.

\bibitem{FlemingSoner06}
W.~H.~Fleming and H.~M.~Soner,
\emph{Controlled Markov Processes and Viscosity Solutions},
2nd ed., Springer, New York, 2006.

\bibitem{GP24}
L. Gr\"afner and N. Perkowski,
Weak well-posedness of energy solutions to singular SDEs with supercritical distributional drift,
arXiv:2407.09046 (2024).

\bibitem{Ha-Li}
Q.~Han and F.~Lin,
\emph{Elliptic Partial Differential Equations},
2nd ed., Amer. Math. Soc., Providence, RI, 2011.

\bibitem{HWZ20}
Z.~Hao, M.~Wu, and X.~Zhang,
Schauder estimates for nonlocal kinetic equations and applications,
\emph{J. Math. Pures Appl. (9)} \textbf{140} (2020), 139--184.

\bibitem{HZ25}
Z.~Hao and X.~Zhang,
SDEs with supercritical distributional drifts,
\emph{Commun. Math. Phys.} \textbf{406} (2025), 250.

\bibitem{Khas68}
R.~Z.~Khasminskii,
On the principle of averaging for parabolic and elliptic differential equations,
(1968).

\bibitem{Kr96}
N.~V.~Krylov,
\emph{Lectures on Elliptic and Parabolic Equations in H\"older Spaces},
Amer. Math. Soc., Providence, RI, 1996.

\bibitem{Kr08}
N.~V.~Krylov,
\emph{Lectures on Elliptic and Parabolic Equations in Sobolev Spaces},
Amer. Math. Soc., Providence, RI, 2008.


\bibitem{KrylovVMO06}
N.~V.~Krylov,
Parabolic and elliptic equations with VMO coefficients,
\emph{Comm. Partial Diff. Equ.} \textbf{32} (2007), 453--475.

\bibitem{Kr00}
N.~V.~Krylov,
Linear and fully nonlinear elliptic equation with $L_d$ drift,
\emph{Comm. Partial Diff. Equ.} \textbf{45} (2020), 1778--1798.


\bibitem{Kr-Ro}
N.~V.~Krylov and M.~R\"ockner,
Strong solutions of stochastic equations with singular time dependent drift,
\emph{Probab. Theory Related Fields} \textbf{131} (2005), 154--196.

\bibitem{Ku-Lo-Lu}
M.~Kunze, L.~Lorenzi, and A.~Lunardi,
Nonautonomous Kolmogorov parabolic equations with unbounded coefficients,
\emph{Trans. Amer. Math. Soc.} \textbf{362} (2010), 169--198.

\bibitem{LSU68}
O.~A.~Ladyzhenskaya, V.~A.~Solonnikov, and N.~N.~Ural'tseva,
\emph{Linear and Quasilinear Equations of Parabolic Type},
Amer. Math. Soc., Providence, RI, 1968.

\bibitem{Lieberman96}
G.~M.~Lieberman,
\emph{Second Order Parabolic Differential Equations},
World Scientific, River Edge, NJ, 1996.

\bibitem{LWX}
Y.~Li, F.~Wu, and L.~Xie,
Poisson equation on Wasserstein space and diffusion approximations for McKean--Vlasov equation,
\emph{SIAM J. Math. Anal.} \textbf{56} (2024), 1495--1542.

\bibitem{Lu}
A.~Lunardi,
Schauder theorems for linear elliptic and parabolic problems with unbounded coefficients in $\mathbb{R}^{N}$,
\emph{Studia Math.} \textbf{128} (1998), 171--198.

\bibitem{MST}
J.~C.~Mattingly, A.~M.~Stuart, and M.~V.~Tretyakov,
Convergence of numerical time-averaging and stationary measures via Poisson equations,
\emph{SIAM J. Numer. Anal.} \textbf{48} (2010), 552--577.

\bibitem{Me-Tw}
S.~P.~Meyn and R.~L.~Tweedie,
Stability of Markovian processes III: Foster--Lyapunov criteria for continuous-time processes,
\emph{Adv. in Appl. Probab.} \textbf{25} (1993), 518--548.

\bibitem{PP}
G.~Pag\`es and F.~Panloup,
Ergodic approximation of the distribution of a stationary diffusion: rate of convergence,
\emph{Ann. Appl. Probab.} \textbf{22} (2012), 1059--1100.

\bibitem{PV01}
\'E.~Pardoux and A.~Yu.~Veretennikov,
On the Poisson equation and diffusion approximation. I,
\emph{Ann. Probab.} \textbf{29} (2001), 1061--1085.

\bibitem{PV03}
\'E.~Pardoux and A.~Yu.~Veretennikov,
On the Poisson equation and diffusion approximation. II,
\emph{Ann. Probab.} \textbf{31} (2003), 1166--1192.

\bibitem{PavStu08}
G.~A.~Pavliotis and A.~M.~Stuart,
\emph{Multiscale Methods: Averaging and Homogenization},
Springer, New York, 2008.

\bibitem{Pin95}
R.~G.~Pinsky,
\emph{Positive Harmonic Functions and Diffusion},
Cambridge Univ. Press, Cambridge, 1995.

\bibitem{Platen99}
E.~Platen,
An introduction to numerical methods for stochastic differential equations,
\emph{Acta Numer.} \textbf{8} (1999), 197--246.

\bibitem{PW06}
E.~Priola and F.-Y.~Wang,
Gradient estimates for diffusion semigroups with singular coefficients,
\emph{J. Funct. Anal.} \textbf{236} (2006), 244--264.

\bibitem{Ro-Xi1}
M.~R\"ockner and L.~Xie,
Diffusion approximation for fully coupled stochastic differential equations,
\emph{Ann. Probab.} \textbf{49} (2021), 1205--1236.

\bibitem{TalayTubaro90}
D.~Talay and L.~Tubaro,
Expansion of the global error for numerical schemes solving stochastic differential equations,
\emph{Stoch. Anal. Appl.} \textbf{8} (1990), 483--509.

\bibitem{Th97}
A.~Thalmaier,
On the differentiation of heat semigroups and Poisson integrals,
\emph{Stochastics Stochastics Rep.} \textbf{61} (1997), 297--321.

\bibitem{TW98}
A.~Thalmaier and F.-Y.~Wang,
Gradient estimates for harmonic functions on regular domains in Riemannian manifolds,
\emph{J. Funct. Anal.} \textbf{155} (1998), 109--124.

\bibitem{WZ25}
F.-Y.~Wang and X.~Zhao,
Bismut formula and gradient estimates for Dirichlet semigroups with application to singular killed DDSDEs,
preprint (2025), arXiv:2506.19429.

\bibitem{XZ20}
L.~Xie and X.~Zhang,
Ergodicity of stochastic differential equations with jumps and singular coefficients,
\emph{Ann. Inst. Henri Poincar\'e Probab. Stat.} \textbf{56} (2020), 175--229.

\bibitem{XZ26}
L.~Xie and X.~Zhang,
Uniform-in-time diffusion approximation for multi-scale stochastic systems,
preprint.

\bibitem{ZZZ22}
X. Zhang, R. Zhu and X. Zhu,
Singular HJB equations with applications to KPZ on the real line,
\emph{Probab. Theory Related Fields} \textbf{183} (2022), 789--869.

\end{thebibliography}
\end{document}